%2multibyte Version: 5.50.0.2953 CodePage: 1254
\documentclass{amsart}%
\usepackage{amsfonts}
\usepackage{amsmath}
\usepackage{amssymb}
\usepackage{graphicx}%
\setcounter{MaxMatrixCols}{30}
%TCIDATA{OutputFilter=latex2.dll}
%TCIDATA{Version=5.50.0.2953}
%TCIDATA{Codepage=1254}
%TCIDATA{CSTFile=amsartci.cst}
%TCIDATA{Created=Thursday, August 31, 2017 15:28:19}
%TCIDATA{LastRevised=Wednesday, October 27, 2021 13:00:15}
%TCIDATA{<META NAME="GraphicsSave" CONTENT="32">}
%TCIDATA{<META NAME="SaveForMode" CONTENT="1">}
%TCIDATA{BibliographyScheme=Manual}
%TCIDATA{<META NAME="DocumentShell" CONTENT="Articles\SW\AMS Journal Article">}
%TCIDATA{Language=American English}
%BeginMSIPreambleData
\providecommand{\U}[1]{\protect\rule{.1in}{.1in}}
%EndMSIPreambleData
\newtheorem{theorem}{Theorem}
\theoremstyle{plain}

\newtheorem{corollary}{Corollary}

\newtheorem{definition}{Definition}
\newtheorem{example}{Example}

\newtheorem{lemma}{Lemma}

\newtheorem{proposition}{Proposition}
\newtheorem{remark}{Remark}

\numberwithin{equation}{section}
\begin{document}
\title[ON WEAKLY $S$-PRIME SUBMODULES]{ON WEAKLY $S$-PRIME SUBMODULES}
\author{Hani A. Khashan}
\address{Department of Mathematics, Faculty of Science, Al al-Bayt University, Al
Mafraq, Jordan.}
\email{hakhashan@aabu.edu.jo.}
\author{Ece Yetkin Celikel}
\address{Department of Basic Sciences, Faculty of Engineering, Hasan Kalyoncu
University, Gaziantep, Turkey.}
\email{ece.celikel@hku.edu.tr, yetkinece@gmail.com.}
\thanks{This paper is in final form and no version of it will be submitted for
publication elsewhere.}
\subjclass[2010]{ Primary 13A15, 16P40, Secondary 16D60.}
\keywords{$S$--prime ideal, weakly $S$-prime ideal, $S$-prime submodule, weakly
$S$-prime submodule, amalgamated algebra}

\begin{abstract}
Let $R$ be a commutative ring with a non-zero identity, $S$ be a
multiplicatively closed subset of $R$ and $M$ be a unital $R$-module. In this
paper, we define a submodule $N$ of $M$ with $(N:_{R}M)\cap S=\phi$ to be
weakly $S$-prime if there exists $s\in S$ such that whenever $a\in R$ and
$m\in M$ with $0\neq am\in N$, then either $sa\in(N:_{R}M)$ or $sm\in N$. Many
properties, examples and characterizations of weakly $S$-prime submodules are
introduced, especially in multiplication modules. Moreover, we investigate the
behavior of this structure under module homomorphisms, localizations, quotient
modules, cartesian product and idealizations. Finally, we define two kinds of
submodules of the amalgamation module along an ideal and investigate
conditions under which they are weakly $S$-prime.

\end{abstract}
\maketitle

\section{Introduction}

Throughout this paper, unless otherwise stated, $R$ denotes a commutative ring
with non-zero identity and $M$ is a unital $R$-module. It is well-known that a
proper submodule $N$ of $M$ is called prime if $rm\in N$ for $r\in R$ and
$m\in M$ implies $r\in(N:_{R}M)$ or $m\in N$ where $(N:_{R}M)=\{r\in R$ $|$
$rM\subseteq N\}$. Since prime ideals and submodules have a vital role in ring
and module theory, several generalizations of these concepts have been studied
extensively by many authors (see, for example, \cite{WS-prime}, \cite{wp},
\cite{S-prime}, \cite{2-abs}, \cite{S-prime subm}, \cite{WS-primary}).

In 2007, Atani and Farzalipour introduced the concept of weakly prime
submodules as a generalization of prime submodules. Following \cite{Wp-subm},
a proper submodule $N$ of $M$ is said to be weakly prime if for $r\in R$ and
$m\in M$, whenever $0\neq rm\in N$, then $r\in(N:_{R}M)$ or $m\in N$ . In 2019
a new kind of generalizations of prime submodules has been introduced and
studied by \c{S}engelen Sevim et. al. \cite{S-prime subm}. For a
multiplicatively closed subset $S$ of $R$, they called a proper submodule $N$
of an $R$-module $M$ with $(N:_{R}M)\cap S=\emptyset$ an $S$-prime if there
exists $s\in S$ such that for $r\in R$ and $m\in M$, whenever $rm\in N$, then
either $sr\in(N:_{R}M)$ or $sm\in N$. In particular, an ideal $I$ of $R$ is
called an $S$-prime ideal if $I$ is an $S$-prime submodule of an $R$-module
$R$, \cite{S-prime}. Recently, Almahdi et. al. generalized $S$-prime ideals by
defining the notion of weakly $S$-prime ideals. A proper ideal $I$ of $R$
disjoint with $S$ is said to be weakly $S$-prime if there exists $s\in S$ such
that for $a,b\in R$ and $0\neq ab\in I$, then either $sa\in I$ or $sb\in I$,
\cite{WS-prime}.

Our objective in this paper is to define and study the concept of weakly
$S$-prime submodules as an extension of the above concepts. Let $S$ be a
multiplicatively closed subset of $R$. We call a submodule $N$ of an
$R$-module $M$ with $(N:_{R}M)\cap S=\emptyset$ a weakly $S$-prime submodule
if there exists $s\in S$ such that for $a\in R$ and $m\in M$, whenever $0\neq
am\in N$ then either $sa\in(N:_{R}M)$ or $sm\in N$. In the second section, we
obtain many equivalent statements to characterize this class of submodules
(see Theorems \ref{char} and \ref{fm}), particularly in multiplication modules
(Theorem \ref{IM}). Moreover, various properties of weakly $S$-prime
submodules are considered and many examples are given for supporting the
results (see for example Theorem \ref{(N:M)}, Propositions \ref{(I:s)},
\ref{IN} and Examples \ref{e1}, \ref{ex11}). We investigate the behavior of
this structure under module homomorphisms, localizations, quotient modules,
cartesian product of modules (see Propositions \ref{loc}, \ref{f}, Theorem
\ref{cart} and Corollary \ref{quot}). Let $S$ be a multiplicatively closed
subset of $R$, $M$ be an $R$-module and consider the idealization ring
$R\ltimes M$. For any submodule $K$ of $M$, the set $S\ltimes K=\{(s,k):$
$s\in S$, $k\in K\}$ is a multiplicatively closed subset of $R\ltimes M$. In
Theorem \ref{Ideal}, we justify the relation among weakly $S$-prime ideals of
$R$, weakly $S$-prime submodules of $M$ and weakly $S\ltimes K$-prime ideals
of the the idealization ring $R\ltimes M$.

Let $f:R_{1}\rightarrow R_{2}$ be a ring homomorphism, $J$ be an ideal of
$R_{2}$, $M_{1}$ be an $R_{1}$-module, $M_{2}$ be an $R_{2}$-module (which is
an $R_{1}$-module induced naturally by $f$) and $\varphi:M_{1}\rightarrow
M_{2}$ be an $R_{1}$-module homomorphism. The subring $R_{1}\Join
^{f}J=\left\{  (r,f(r)+j):r\in R_{1}\text{, }j\in J\right\}  $ of $R_{1}\times
R_{2}$ is called the amalgamation of $R_{1}$ and $R_{2}$ along $J$ with
respect to $f$, \cite{Danna}. The amalgamation of $M_{1}$ and $M_{2}$ along
$J$ with respect to $\varphi$ is defined in \cite{Rachida} as%

\[
M_{1}\Join^{\varphi}JM_{2}=\left\{  (m_{1},\varphi(m_{1})+m_{2}):m_{1}\in
M_{1}\text{ and }m_{2}\in JM_{2}\right\}
\]
which is an $(R_{1}\Join^{f}J)$-module with the scaler product defined as
\[
(r,f(r)+j)(m_{1},\varphi(m_{1})+m_{2})=(rm_{1},\varphi(rm_{1})+f(r)m_{2}%
+j\varphi(m_{1})+jm_{2})
\]
For submodules $N_{1}$ and $N_{2}$ of $M_{1}$ and $M_{2}$, respectively, the
sets
\[
N_{1}\Join^{\varphi}JM_{2}=\left\{  (m_{1},\varphi(m_{1})+m_{2})\in M_{1}%
\Join^{\varphi}JM_{2}:m_{1}\in N_{1}\right\}
\]
and
\[
\overline{N_{2}}^{\varphi}=\left\{  (m_{1},\varphi(m_{1})+m_{2})\in M_{1}%
\Join^{\varphi}JM_{2}:\text{ }\varphi(m_{1})+m_{2}\in N_{2}\right\}
\]
are submodules of $M_{1}\Join^{\varphi}JM_{2}$. Section 3 is devoted for
studying several conditions under which the submodules $N_{1}\Join^{\varphi
}JM_{2}$ and $\overline{N_{2}}^{\varphi}$ of $M_{1}\Join^{\varphi}JM_{2}$ are
(weakly) $S$-prime submodules, (see Theorems \ref{Amalg}, \ref{Amalg3}).
Furthermore, we conclude some particular results for the duplication of a
module along an ideal (see Corollaries \ref{ca1}-\ref{ca3}, \ref{Dup}%
-\ref{Dup2} and Theorem \ref{Amalg2}).

For the sake of completeness, we start with some definitions and notations
which will be used in the sequel. A non-empty subset $S$ of a ring $R$ is said
to be a multiplicatively closed set if $S$ is a subsemigroup of $R$ under
multiplication. An $R$-module $M$ is called multiplication provided for each
submodule $N$ of $M,$ there exists an ideal $I$ of $R$ such that $N=IM$. In
this case, $I$ is said to be a presentation ideal of $N$. In particular, for
every submodule $N$ of a multiplication module $M$, $ann(M/N)=(N:_{R}M)$ is a
presentation for $N$. The product of two submodules $N$ and $K$ of a
multiplication module $M$ is defined as $NK=(N:_{R}M)(K:_{R}M)M$. For
$m_{1},m_{2}\in M$, by $m_{1}m_{2},$ we mean the product of $Rm_{1}$ and
$Rm_{2}$ which is equal to $IJM$ for presentation ideals $I$ and $J$ of
$m_{1}$ and $m_{2}$, respectively \cite{Ameri}. Let $N$ be a proper submodule
of an $R$-module $M$. The radical of $N$ (denoted by $M$-$rad(N)$) is defined
in \cite{El-bast} to be the intersection of all prime submodules of $M$
containing $N$. If $M$ is multiplication, then $M$-$rad(N)=\{m\in M$ $|$
$m^{k}\subseteq N$ for some $k\geq0\}.$ As usual, $%
%TCIMACRO{\U{2124} }%
%BeginExpansion
\mathbb{Z}
%EndExpansion
$, $%
%TCIMACRO{\U{2124} }%
%BeginExpansion
\mathbb{Z}
%EndExpansion
_{n}$ and $%
%TCIMACRO{\U{211a} }%
%BeginExpansion
\mathbb{Q}
%EndExpansion
$ denotes the ring of integers, the ring of integers modulo $n$ and the field
of rational numbers, respectively. For more details and terminology, one may
refer to \cite{Ali}, \cite{Majed}, \cite{At}, \cite{Gilmer}, \cite{Larsen}.

\section{Characterizations of Weakly $S$-prime Submodules}

We begin with the definitions and relationships of the main concepts of the paper.

\begin{definition}
Let $S$ be a multiplicatively closed subset of a ring $R$ and $N$ be a
submodule of an $R$-module $M$ with $(N:_{R}M)\cap S=\emptyset$. We call $N$ a
weakly $S$-prime submodule if there exists (a fixed) $s\in S$ such that for
$a\in R$ and $m\in M$, whenever $0\neq am\in N$ then either $sa\in(N:_{R}M)$
or $sm\in N$. The fixed element $s\in S$ is said to be a weakly $S$-element of
$N.$
\end{definition}

It is clear that every $S$-prime submodule is a weakly $S$-prime submodule.
Since the zero submodule is (by definition) a weakly $S$-prime submodule of
any $R$-module, then the converse is not true in general. For a less trivial
example, let $M$ be a non-zero local multiplication $R$-module with the unique
maximal submodule $K$ such that $(K:_{R}M)K=0$. If we consider $S=\left\{
1_{R}\right\}  $, then every proper submodule of $M$ is weakly $S$-prime,
\cite{Ali}. Hence, there is a weakly $S$-prime submodule in $M$ that is not
$S$-prime.

Also, every weakly prime submodule $N$ of an $R$-module $M$ satisfying
$(N:_{R}M)\cap S=\emptyset$ is a weakly $S$-prime submodule of $M$ and the two
concepts coincide if $S\subseteq U(R)$ where $U(R)$ denotes the set of units
in $R$. The following example shows that the converse need not be true.

\begin{example}
\label{e1}Consider the $%
%TCIMACRO{\U{2124} }%
%BeginExpansion
\mathbb{Z}
%EndExpansion
$-module $M=%
%TCIMACRO{\U{2124} }%
%BeginExpansion
\mathbb{Z}
%EndExpansion
\times%
%TCIMACRO{\U{2124} }%
%BeginExpansion
\mathbb{Z}
%EndExpansion
_{6}$ and let $N=2%
%TCIMACRO{\U{2124} }%
%BeginExpansion
\mathbb{Z}
%EndExpansion
\times\left\langle \bar{3}\right\rangle $. Then $N$ is a (weakly) $S$-prime
submodule of $M$ where $S=\left\{  2^{n}:n\in%
%TCIMACRO{\U{2115} }%
%BeginExpansion
\mathbb{N}
%EndExpansion
\cup\left\{  0\right\}  \right\}  $. Indeed, let $(0,\bar{0})\neq
r.(r^{\prime},m)\in N$ for $r,r^{\prime}\in%
%TCIMACRO{\U{2124} }%
%BeginExpansion
\mathbb{Z}
%EndExpansion
$ and $m\in%
%TCIMACRO{\U{2124} }%
%BeginExpansion
\mathbb{Z}
%EndExpansion
_{6}$ such that $2r\notin(N:M)=6%
%TCIMACRO{\U{2124} }%
%BeginExpansion
\mathbb{Z}
%EndExpansion
$. Then $r.m\in\left\langle \bar{3}\right\rangle $ with $r\notin3%
%TCIMACRO{\U{2124} }%
%BeginExpansion
\mathbb{Z}
%EndExpansion
$ and so $m\in\left\langle \bar{3}\right\rangle $. Thus, $2.(r^{\prime},m)\in
N$ as needed. On the other hand, $N$ is not a weakly prime submodule since
$(0,\bar{0})\neq2.(1,\bar{0})\in N$ but $2\notin(N:M)$ and $(1,\bar{0})\notin
N$.
\end{example}

Let $N$ be a submodule of an $R$-module $M$ and $I$ be an ideal of $R$. The
residual of $N$ by $I$ is the set $(N:_{M}I)=\{m\in M:Im\subseteq N\}$. It is
clear that $(N:_{M}I)$ is a submodule of $M$ containing $N$. More generally,
for any subset $A\subseteq R$, $(N:_{M}A)$ is a submodule of $M$ containing
$N$.

\begin{theorem}
\label{char}Let $S$ be a multiplicatively closed subset of a ring $R$. For a
submodule $N$ of an $R$-module $M$ with $(N:_{R}M)\cap S=\emptyset$, the
following conditions are equivalent.
\end{theorem}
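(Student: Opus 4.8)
The plan is to prove the stated equivalences by establishing a cycle of implications, carrying a single weakly $S$-element $s \in S$ through the entire argument, since the characteristic feature of the $S$-setting is that one fixed $s$ must witness the property uniformly. I anticipate the equivalent conditions to be reformulations of the defining property: a residual form (controlling $(N:_M a)$ for those $a$ with $sa \notin (N:_R M)$) and a global form involving the product of an ideal $I$ of $R$ with a submodule $L$ of $M$. Accordingly, I would aim at a chain such as (definition) $\Rightarrow$ (residual) $\Rightarrow$ (global) $\Rightarrow$ (definition), so that the longest individual step is as short as possible.

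For the passage from the definition to a residual statement, I would fix the weakly $S$-element $s$ and take $a \in R$ with $sa \notin (N:_R M)$. Given $m \in (N:_M a)$, i.e. $am \in N$, I split into two cases: if $am = 0$ then $m \in (0:_M a)$; if $am \neq 0$ then $0 \neq am \in N$ triggers the definition and yields $sm \in N$, i.e. $m \in (N:_M s)$. This gives the inclusion $(N:_M a) \subseteq (N:_M s) \cup (0:_M a)$, and the reverse inclusions are immediate, so the only content here is the clean bookkeeping of the ``$am = 0$'' case. Conversely, to recover the definition from a residual or global statement I would simply unwind the quantifiers: from $0 \neq am \in N$ with $sa \notin (N:_R M)$ one reads off $m \in (N:_M a)$ and $m \notin (0:_M a)$ (as $am \neq 0$), hence $m \in (N:_M s)$, which is exactly $sm \in N$.

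The step I expect to be the genuine obstacle is the globalization from elements to the ideal-submodule product form, i.e. showing that $0 \neq IL \subseteq N$ and $sI \not\subseteq (N:_R M)$ force $sL \subseteq N$. The difficulty is exactly the one familiar from weakly prime ideals: the defining property only constrains \emph{nonzero} products $am$, so one cannot argue element-by-element without first ruling out the degenerate situation in which individual products vanish while $IL \neq 0$. I would handle this by a case analysis that isolates an element $a \in I$ with $sa \notin (N:_R M)$, then separately treats the $m \in L$ for which $am = 0$ and those for which $am \neq 0$, using that $IL \neq 0$ to produce, via a summing/translation trick, a nonzero product to which the definition can be applied; the recovered global statement then specializes back to the definition by taking $I = Ra$ and $L = Rm$, for which $IL = Ram \subseteq N$ is nonzero precisely when $am \neq 0$.
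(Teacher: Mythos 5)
Your proposal is correct and follows essentially the same route as the paper: its chain is (1)$\Rightarrow$(2)$\Rightarrow$(3)$\Rightarrow$(4)$\Rightarrow$(1), where (2) is exactly your residual condition, (4) is your ideal--submodule form, (4)$\Rightarrow$(1) is your specialization $I=aR$, $K=\langle m\rangle$, and the crux you identified --- $0\neq IK\subseteq N$ while individual products vanish --- is resolved by the same summing trick (replacing $a$ by $b$, or by $a+b$ when $sb\in(N:_{R}M)$, after choosing $b\in I$ with $bK\neq 0$). The only structural difference is cosmetic: the paper inserts an intermediate element--submodule condition (3) and applies the residual condition to the whole submodule $K$ at once, so it never needs your element-wise translation $m\mapsto m+m'$ inside $M$; both variants are sound.
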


\begin{enumerate}
\item $N$ is a weakly $S$-prime submodule of $M$.

\item There exists $s\in S$ such that $(N:_{M}a)=(0:_{M}a)$ or $(N:_{M}%
a)\subseteq(N:_{M}s)$ for each $a\notin(N:_{R}sM)$.

\item There exists $s\in S$ such that for any $a\in R$ and for any submodule
$K$ of $M$, if $0\neq aK\subseteq N$, then $sa\subseteq(N:_{R}M)$ or
$sK\subseteq N.$

\item There exists $s\in S$ such that for any ideal $I$ of $R$ and a submodule
$K$ of $M$, if $0\neq IK\subseteq N$, then $sI\subseteq(N:_{R}M)$ or
$sK\subseteq N.$
\end{enumerate}

\begin{proof}
(1)$\Rightarrow$(2). Let $s\in S$ be a weakly $S$-element of $N$ and
$a\notin(N:_{M}sM).$ Let $m\in(N:_{M}a)$. If $am=0$, then clearly $m\in
(0:_{M}a)$. If $0\neq am\in N$, then, we conclude $sm\in N$ as $sa\notin
(N:_{R}M)$ and $N$ is weakly $S$-prime in $M$. Thus, $m\in(N:_{M}s)$ and so
$(N:_{M}a)\subseteq(0:_{M}a)\cup(N:_{M}s)$. Therefore, $(N:_{M}a)\subseteq
(0:_{M}a)$ (which implies $(N:_{M}a)=(0:_{M}a)$) or $(N:_{M}a)\subseteq
(N:_{M}s)$.

(2)$\Rightarrow$(3). Choose $s\in S$ as in (2) and suppose $0\neq aK\subseteq
N$ and $sa\notin(N:_{R}M)$ for some $a\in R$ and a submodule $K$ of $M$. Then
$K\subseteq(N:_{M}a)\backslash(0:_{M}a)$ and by (2) we get $K\subseteq
(N:_{M}a)\subseteq(N:_{M}s)$. Thus, $sK\subseteq N$ as required.

(3)$\Rightarrow$(4). Choose $s\in S$ as in (3) and suppose $0\neq IK\subseteq
N$ and $sI\nsubseteq(N:_{R}M)$ for some ideal $I$ of $R$ and a submodule $K$
of $M.$ Then there exists $a\in I$ with $sa\notin(N:_{R}M)$. If $aK\neq0$,
then by (3), we have $sK\subseteq N$ as needed. Assume that $aK=0$. Since
$IK\neq0$, there is some $b\in I$ with $bK\neq0$. If $sb\notin(N:_{R}M),$ then
from (3), we have $sK\subseteq N$. Now, assume that $sb\in(N:_{R}M).$ Since
$sa\notin(N:_{R}M),$ we have $s(a+b)\notin(N:_{R}M)$. Hence, $0\neq
(a+b)K\subseteq N$ implies $sK\subseteq I$ again by (3) and we are done.

(4)$\Rightarrow$(1). Let $a\in R,$ $m\in M$ with $0\neq am\in N$. The result
follows directly by taking $I=aR$ and $K=<m>$ in (4).
\end{proof}

\begin{theorem}
\label{fm}Let $M$ be a faithful multiplication $R$-module and $S$ be a
multiplicatively closed subset of $R$. The following are equivalent.
\end{theorem}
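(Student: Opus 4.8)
The plan is to exploit the order-preserving correspondence between submodules of a faithful multiplication module $M$ and ideals of $R$ given by $N \mapsto (N:_{R}M)$ and $I \mapsto IM$. Recall from the preliminaries that in this setting $N = (N:_{R}M)M$ for every submodule $N$ (so $(N:_{R}M)$ is the canonical presentation ideal of $N$), that $\mathrm{ann}(M)=0$, and that the product of submodules is $KL = (K:_{R}M)(L:_{R}M)M$. I expect the equivalent conditions to include: $N$ is weakly $S$-prime as a submodule; $(N:_{R}M)$ is a weakly $S$-prime ideal of $R$; a submodule-product form asserting $0\neq KL\subseteq N$ implies $sK\subseteq N$ or $sL\subseteq N$; and possibly a presentation form $N=IM$ for a weakly $S$-prime ideal $I$ disjoint from $S$. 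My strategy is to prove each ideal-theoretic statement equivalent to the submodule condition by transporting hypotheses across the correspondence, using Theorem \ref{char} as the main engine.

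First I would show that $(N:_{R}M)$ is weakly $S$-prime whenever $N$ is, retaining the same weakly $S$-element $s$. Given $a,b\in R$ with $0\neq ab\in(N:_{R}M)$, faithfulness forces $abM\neq 0$, so with the ideal $I=aR$ and the submodule $K=bM$ one has $0\neq IK = abM\subseteq N$; Theorem \ref{char}(4) then yields $sa\in(N:_{R}M)$ or $sbM\subseteq N$, the latter meaning $sb\in(N:_{R}M)$. Conversely, to recover the submodule condition from ``$(N:_{R}M)$ weakly $S$-prime'', I would take $0\neq am\in N$ with $a\in R$, $m\in M$, write $Rm=JM$ for a presentation ideal $J$ of $m$, and observe that $R(am)=aJM\subseteq N$ gives $aJ\subseteq(N:_{R}M)$ with $aJ\neq 0$ (again by faithfulness, since $am\neq 0$). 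Applying the ideal-level product characterization of weakly $S$-prime ideals to $0\neq(aR)J\subseteq(N:_{R}M)$ produces $sa\in(N:_{R}M)$ or $sJ\subseteq(N:_{R}M)$; the second alternative gives $sJM=s\,Rm=R(sm)\subseteq N$, hence $sm\in N$, as wanted. The submodule-product equivalence is handled exactly as in Theorem \ref{char}(3)--(4) via $KL=(K:_{R}M)(L:_{R}M)M$, and the presentation form follows by taking the canonical $I=(N:_{R}M)$.

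The main obstacle I anticipate is the careful bookkeeping of the nonzero hypotheses in the weakly case: the implications $0\neq ab\Rightarrow abM\neq 0$ and $0\neq am\Rightarrow aJ\neq 0$ are precisely where faithfulness is indispensable, and without it the ``$0\neq$'' condition cannot be transported cleanly across the correspondence (this is why the theorem restricts to faithful modules rather than arbitrary multiplication modules). A secondary point is that the reverse direction leans on a product characterization for weakly $S$-prime ideals analogous to Theorem \ref{char}; I would either invoke it from the cited work of Almahdi et al.\ or derive it first, taking care throughout that the disjointness condition $(N:_{R}M)\cap S=\emptyset$ is inherited correctly so that all the statements concern genuinely proper, $S$-disjoint objects.
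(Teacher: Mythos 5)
Your proposal is correct and, for the equivalence the paper actually asserts, it coincides with the paper's own proof: the enumerated conditions you could not see are just (1) $N$ is weakly $S$-prime and (2) $N\cap SM=\emptyset$ together with the submodule-product condition ($0\neq KL\subseteq N$ implies $sK\subseteq N$ or $sL\subseteq N$), and the paper proves (1)$\Leftrightarrow$(2) exactly by the transport you compress into one sentence — replace $K$ by a presentation ideal $I$ (so $KL=IL$), apply Theorem \ref{char}(4), and convert back via $sK=sIM\subseteq N$. The bulk of your writeup — the equivalence of the submodule condition with ``$(N:_{R}M)$ is a weakly $S$-prime ideal'' and with the presentation form $N=IM$ — is also correct, but it is not part of Theorem \ref{fm}: it is the content of Theorem \ref{(N:M)}(1),(2) and Theorem \ref{IM}, which the paper proves separately, and it requires the ideal-level product characterization of weakly $S$-prime ideals that you rightly flag as an external ingredient, whereas the paper's proof of Theorem \ref{fm} avoids it and needs only Theorem \ref{char}. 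One small slip worth noting: in your reverse direction, the claim $aJ\neq 0$ does not use faithfulness — since $am\in aJM$, $aJ=0$ would force $am=0$ outright; faithfulness is genuinely indispensable only for implications of the type $0\neq ab\Rightarrow abM\neq 0$, as in your forward direction.
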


\begin{enumerate}
\item $N$ is a weakly $S$-prime submodule of $M$.

\item $N\cap SM=\phi$ and there exists $s\in S$ such that whenever $K,L$ are
submodules of $M$ and $0\neq KL\subseteq N$, then $sK\subseteq N$ or
$sL\subseteq N$.
\end{enumerate}

\begin{proof}
Clearly, we have $N\cap SM=\phi$ if and only if $(N:_{R}M)\cap S=\emptyset$.

(1)$\Rightarrow$(2). Let $I$ be a presentation ideal of $K$ and $s$ be a
weakly $S$-element of $N$. Then $0\neq IL\subseteq N$ yields that either
$sI\subseteq(N:_{R}M)$ or $sL\subseteq N$ by Theorem \ref{char}. Hence,
$sK=sIM\subseteq N$ or $sL\subseteq N$, as needed.

(2)$\Rightarrow$(1). Let $s\in S$ be as in (2) and suppose $0\neq IL\subseteq
N$ for some ideal $I$ of $R$ and submodule $L$ of $M$. Put $K=IM$ and assume
that $sL\nsubseteq N$. Then $0\neq KL\subseteq N$ which implies $sK\subseteq
N$. Therefore, $sI\subseteq(N:_{R}M)$ and the result follows by Theorem
\ref{char}.
\end{proof}

\begin{theorem}
\label{(N:M)}Let $N$ be a submodule of an $R$-module $M$ and $S$ be a
multiplicatively closed subset of $R$. The following statements hold.
\end{theorem}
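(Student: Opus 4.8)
The plan is to derive each listed property by moving between the element-wise formulation of weak $S$-primeness and the ideal/submodule formulations already available in Theorems \ref{char} and \ref{fm}, always keeping a single weakly $S$-element $s$ fixed throughout. In particular, any statement asserting that $(N:_{R}M)$ is a weakly $S$-prime ideal of $R$ I would read through part (4) of Theorem \ref{char}, which is the cleanest bridge between ``$0\neq IK\subseteq N$'' on the module side and ``$0\neq ab\in(N:_{R}M)$'' on the ring side. The disjointness bookkeeping $(N:_{R}M)\cap S=\emptyset$ is part of the hypothesis and transfers verbatim, so I would dispose of it first and then focus on the multiplicative alternative.

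For the forward direction, that $N$ being a weakly $S$-prime submodule forces $(N:_{R}M)$ to be a weakly $S$-prime ideal, I would start from $a,b\in R$ with $0\neq ab\in(N:_{R}M)$, so that $abM\subseteq N$. The natural move is to set $I=aR$ and $K=bM$ and feed the containment $IK=abM\subseteq N$ into Theorem \ref{char}(4); when $abM\neq 0$ this immediately returns $sa\in(N:_{R}M)$ or $s(bM)\subseteq N$, i.e. $sb\in(N:_{R}M)$, which is exactly the weakly $S$-prime ideal condition with the same $s$.

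For any equivalence phrased over multiplication (or faithful multiplication) modules I would instead invoke Theorem \ref{fm}: writing a submodule through a presentation ideal lets me translate ``$sK\subseteq N$'' into ``$sI\subseteq(N:_{R}M)$'' and back, so that a weakly $S$-prime ideal $(N:_{R}M)$ pulls back to a weakly $S$-prime submodule $N$. Statements involving the residuals $(N:_{M}a)$ and $(N:_{M}s)$ I would extract directly from the equivalence (1)$\Leftrightarrow$(2) of Theorem \ref{char}, since those residuals are precisely the objects appearing there.

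The main obstacle is the degenerate case created by the ``$0\neq$'' clause, which is exactly what separates the weak theory from the $S$-prime theory. In the $S$-prime setting one argues ``for every $m$, either $sa\in(N:_{R}M)$ or $sm\in N$'' and then sweeps over all $m$; here the hypothesis $0\neq ab$ need not yield $abM\neq 0$ unless some faithfulness is available, and even the per-element alternative $0\neq a(bm)\in N$ can fail when $a(bm)=0$. I expect to handle this by isolating the annihilator part (the case $abM=0$, or $aK=0$ on a submodule), mirroring the $aK=0$ versus $bK\neq 0$ split already used in the proof of (3)$\Rightarrow$(4) of Theorem \ref{char}, and by invoking faithfulness exactly where it is hypothesized to guarantee the implication $ab\neq 0\Rightarrow abM\neq 0$.
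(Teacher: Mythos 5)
Your plan for the first three assertions is essentially the paper's own proof. For part (1) the paper does exactly what you describe: from $0\neq ab\in(N:_{R}K)$ (the statement is actually for any submodule $K$ with $Ann(K)=0$ and $(N:_{R}K)\cap S=\emptyset$, with $K=M$ faithful as the special case), the hypothesis $Ann(K)=0$ gives $0\neq abK\subseteq N$, and Theorem \ref{char} yields $sa\in(N:_{R}M)\subseteq(N:_{R}K)$ or $sbK\subseteq N$, i.e.\ $sb\in(N:_{R}K)$, with the same $s$; your observation that faithfulness is what rescues $ab\neq0\Rightarrow abK\neq0$ is precisely how the hypothesis is consumed, and no further annihilator case split is needed there. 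For part (2) the paper likewise writes $K=JM$ via a presentation ideal, notes $0\neq IJ\subseteq(N:_{R}M)$ (since $IJ=0$ would force $IK=0$), applies the ideal-pair form of the weakly $S$-prime condition for the ideal $(N:_{R}M)$ (a cited result; your plan also tacitly needs this upgrade from the element-wise definition before feeding into Theorem \ref{char}(4)), and part (3) is then (1) combined with (2) since $(IM:_{R}M)=I$ for faithful multiplication modules. One caution: Theorem \ref{fm} assumes $M$ faithful, so it can serve for part (3) but not for part (2), where only ``multiplication'' is assumed; the direct presentation-ideal translation you also describe is the correct tool there.

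The genuine gap is part (4): if $N$ is weakly $S$-prime, $(0:_{M}A)=0$ and $Z_{(N:_{R}M)}(R)\cap A=\emptyset$, then $(N:_{M}A)$ is a weakly $S$-prime submodule of $M$. This does not follow from the equivalence (1)$\Leftrightarrow$(2) of Theorem \ref{char}, which characterizes weak $S$-primeness of $N$ itself in terms of the residuals $(N:_{M}a)$; here one must verify the definition for the new submodule $(N:_{M}A)$. In particular, your claim that the disjointness bookkeeping ``transfers verbatim'' fails exactly at this point: $((N:_{M}A):_{R}M)\cap S=\emptyset$ is not automatic, and is where the hypothesis $Z_{(N:_{R}M)}(R)\cap A=\emptyset$ enters (if $t\in S$ with $tM\subseteq(N:_{M}A)$ then $tA\subseteq(N:_{R}M)$, and the non-zero-divisor condition on $A$ modulo $(N:_{R}M)$ forces $t\in(N:_{R}M)$, contradicting $(N:_{R}M)\cap S=\emptyset$). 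The main step then uses the other hypothesis: from $0\neq rm\in(N:_{M}A)$, the condition $(0:_{M}A)=0$ gives $0\neq r(Am)\subseteq N$, and Theorem \ref{char}(3) applied with the submodule $Am$ yields $sr\in(N:_{R}M)\subseteq((N:_{M}A):_{R}M)$ or $sAm\subseteq N$, i.e.\ $sm\in(N:_{M}A)$. Without this multiply-by-$A$ argument your plan does not reach part (4).
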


\begin{enumerate}
\item If $N$ is a weakly $S$-prime submodule of $M$, then for every submodule
$K$ with $(N:_{R}K)\cap S=\emptyset$ and $Ann(K)=0,$ $(N:_{R}K)$ is a weakly
$S$-prime ideal of $R$. In particular, if $M$ is faithful, then $(N:_{R}M)$ is
a weakly $S$-prime ideal of $R.$

\item If $M$ is multiplication and $(N:_{R}M)$ is a weakly $S$-prime ideal of
$R$, then $N$ is a weakly $S$-prime submodule of $M$.

\item If $M~$is faithful multiplication and $I$ is an ideal of $R$, then $I$
is weakly $S$-prime in $R$ if and only if $IM$ is a weakly $S$-prime submodule
of $M$.

\item If $N$ is a weakly $S$-prime submodule of $M$ and $A$ is a subset of $R$
such that $(0:_{M}A)=0$ and $Z_{(N:_{R}M)}(R)\cap A=\phi$, then $(N:_{M}A)$ is
a weakly $S$-prime submodule of $M$.
\end{enumerate}

\begin{proof}
(1) Suppose $s\in S$ is a weakly $S$-element of $N$ and let $a,b\in R$ with
$0\neq ab\in(N:_{R}K)$. Since $Ann(K)=0$, we have $0\neq abK\subseteq N$ which
implies $sa\in(N:_{R}M)$ or $sbK\subseteq N$ by Theorem \ref{char}. Hence,
$sa\in(N:_{R}K)$ or $sb\in(N:_{R}K)$. Thus, $(N:_{R}K)$ is a weakly $S$-prime
ideal associated with the same $s\in S$. The "in particular" part is clear.

(2) Suppose $M$ is multiplication and $(N:_{R}M)$ is a weakly $S$-prime ideal
of $R$. Let $I$ be an ideal of $R$ and $K$ be a submodule of $M$ with $0\neq
IK\subseteq N$. Since $M$ is multiplication, we may write $K=JM$ for some
ideal $J$ of $R$. Thus, $0\neq IJ\subseteq(N:_{R}M)$, and by \cite[Theorem
1]{S-prime}, there exists an $s\in S$ such that $sI\subseteq(N:_{R}M)$ or
$sJ\subseteq(N:_{R}M)$. Thus, $sI\subseteq(N:_{R}M)$ or $sK=sJM\subseteq
(N:_{R}M)M=N$. Therefore, $N$ is a weakly $S$-prime submodule of $M$ by (4) of
Theorem \ref{char}.

(3) Suppose $M~$is faithful multiplication and $I$ is an ideal of $R$. Since
$(IM:_{R}M)=I,$ the result follows from (1) and (2).

(4) Let $s\in S$ be a weakly $S$-element of $N$. We firstly note that
$((N:_{M}A):_{R}M)\cap S=\phi$. Indeed, if $t\in((N:_{M}A):_{R}M)\cap S$, then
$tA\subseteq(N:_{R}M)$ and so $t\in(N:_{R}M)$ as $Z_{(N:_{R}M)}(R)\cap A=\phi
$, a contradiction. Let $r\in R$ and $m\in M$ such that $0\neq rm\in(N:_{M}%
A)$. Then $0\neq Arm\subseteq N$ since $(0:_{M}A)=0$. By assumption, either
$sr\in(N:_{R}M)$ or $sAm\subseteq N$. Thus, $sr\in((N:_{M}A):_{R}M)$ or
$sm\in(N:_{M}A)$ as needed.
\end{proof}

We show by the following example that the condition "faithful module" in
Theorem \ref{(N:M)} (1) is crucial.

\begin{example}
Let $p_{1},p_{2}$ and $p_{3}$ be distinct prime integers. Consider the
non-faithful $%
%TCIMACRO{\U{2124} }%
%BeginExpansion
\mathbb{Z}
%EndExpansion
$-module $M=%
%TCIMACRO{\U{2124} }%
%BeginExpansion
\mathbb{Z}
%EndExpansion
_{p_{1}p_{2}}\times%
%TCIMACRO{\U{2124} }%
%BeginExpansion
\mathbb{Z}
%EndExpansion
_{p_{1}p_{2}}$ and the multiplicatively closed subset $S=\left\{  p_{3}%
^{n}:n\in%
%TCIMACRO{\U{2115} }%
%BeginExpansion
\mathbb{N}
%EndExpansion
\cup\left\{  0\right\}  \right\}  $ of $%
%TCIMACRO{\U{2124} }%
%BeginExpansion
\mathbb{Z}
%EndExpansion
$. While $N=\overline{0}\times\overline{0}$ is a weakly $S$-prime submodule of
$M$, we have clearly $(N:_{%
%TCIMACRO{\U{2124} }%
%BeginExpansion
\mathbb{Z}
%EndExpansion
}M)=\left\langle p_{1}p_{2}\right\rangle $ is not a weakly $S$-prime ideal of
$%
%TCIMACRO{\U{2124} }%
%BeginExpansion
\mathbb{Z}
%EndExpansion
$.
\end{example}

Let $N$ be a proper submodule of an $R$-module $M$. Then $N$ is said to be a
maximal weakly $S$-prime submodule if there is no weakly $S$-prime submodule
which contains $N$ properly. In the following corollary, by $Z(M)$, we denote
the set $\{r\in R:rm=0$ for some $m\in M\backslash\{0_{M}\}\}$.

\begin{corollary}
Let $N$ be a submodule of $M$ such that $Z_{(N:_{R}M)}(R)\cup Z(M)\subseteq
(N:_{R}M)$.\ If $N$ is a maximal weakly $S$-prime submodule of $M$, then $N$
is an $S$-prime submodule of $M$.
\end{corollary}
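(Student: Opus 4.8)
The plan is to show that a fixed weakly $S$-element of $N$ already witnesses the stronger $S$-prime property, so that the only gap to bridge is the behaviour of products $am$ that happen to vanish. Since a maximal weakly $S$-prime submodule is in particular weakly $S$-prime, we have $(N:_{R}M)\cap S=\emptyset$ and there is a fixed $s\in S$ such that $0\neq am\in N$ forces $sa\in(N:_{R}M)$ or $sm\in N$. I would keep this same $s$ and verify that it serves as an $S$-element for the $S$-prime condition, i.e.\ that for \emph{every} $a\in R$ and $m\in M$ with $am\in N$ (now dropping the restriction $am\neq0$) one still obtains $sa\in(N:_{R}M)$ or $sm\in N$. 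This reduces the whole statement to analysing the single degenerate case $am=0$, which is precisely where weakly $S$-prime submodules may fail to be $S$-prime.

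To handle $am=0$ I would split on $m$. If $m=0_{M}$, then $sm=0_{M}\in N$ and the $S$-prime alternative holds trivially. If $m\neq0_{M}$, then the relation $am=0$ with $m$ nonzero exhibits $a$ as a zero-divisor on $M$, that is $a\in Z(M)$; the hypothesis $Z(M)\subseteq(N:_{R}M)$ then gives $a\in(N:_{R}M)$, and since $(N:_{R}M)$ is an ideal we get $sa\in(N:_{R}M)$. Combining this with the immediate treatment of the nonzero case (where weakly $S$-primeness with element $s$ directly supplies one of the two alternatives), the $S$-prime condition holds for the fixed $s$ in all cases, so $N$ is $S$-prime.

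The step I expect to carry the real weight is the case $am=0$ with $m\neq0_{M}$: this is exactly the configuration that distinguishes the weak notion from the strong one, and the containment $Z(M)\subseteq(N:_{R}M)$ is tailored to neutralise it by forcing any such $a$ into $(N:_{R}M)$. The companion containment $Z_{(N:_{R}M)}(R)\subseteq(N:_{R}M)$ is equivalent to saying that $(N:_{R}M)$ is a prime ideal, and the maximality of $N$ guarantees that $N$ sits at the top of the weakly $S$-prime hierarchy; these ensure the ambient primeness built into the statement, but the actual upgrade from weakly $S$-prime to $S$-prime is governed by the control on $Z(M)$. For this reason I would present the direct case analysis above rather than a contradiction argument that enlarges $N$, since no maximal extension needs to be constructed to close the argument.
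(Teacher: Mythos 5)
Your proof is correct, but it is genuinely different from the paper's, and in fact it proves more than the stated corollary. The paper's argument fixes a weakly $S$-element $s$, supposes $am\in N$ with $sa\notin(N:_{R}M)$, deduces from the hypothesis $Z_{(N:_{R}M)}(R)\cup Z(M)\subseteq(N:_{R}M)$ that $a\notin Z_{(N:_{R}M)}(R)$ and $(0:_{M}a)=0$, invokes Theorem \ref{(N:M)} (4) to conclude that the residual $(N:_{M}a)$ is a weakly $S$-prime submodule containing $N$, and only then uses maximality to force $(N:_{M}a)=N$, whence $m\in N$ and so $sm\in N$. Your argument bypasses the residual construction entirely: since the only difference between the weakly $S$-prime and $S$-prime conditions is the degenerate case $am=0$, you dispose of it directly, noting that $m=0_{M}$ gives $sm=0_{M}\in N$, while $m\neq 0_{M}$ gives $a\in Z(M)\subseteq(N:_{R}M)$ and hence $sa\in(N:_{R}M)$ because $(N:_{R}M)$ is an ideal. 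This is airtight, and it exposes that two of the stated hypotheses are never used in your route: neither the maximality of $N$ nor the containment $Z_{(N:_{R}M)}(R)\subseteq(N:_{R}M)$ enters, so you have actually shown that \emph{any} weakly $S$-prime submodule $N$ with $Z(M)\subseteq(N:_{R}M)$ is $S$-prime, with the same witness $s$. What the paper's route buys is an illustration of how Theorem \ref{(N:M)} (4) combines with maximality (placing the corollary in the context of maximal weakly $S$-prime submodules); what your route buys is brevity, a strictly weaker hypothesis, and the explicit observation that the weakly $S$-element itself serves as the $S$-element. Your side remark that $Z_{(N:_{R}M)}(R)\subseteq(N:_{R}M)$ is equivalent to $(N:_{R}M)$ being prime is also correct, since $(N:_{R}M)$ is proper by its disjointness from $S$, though, as you note, it carries no weight in the argument.
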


\begin{proof}
Let $s\in S$ be a weakly $S$-element of $N$. Suppose that $am\in N$ and
$sa\notin(N:_{R}M)$ for some $a\in R$ and $m\in M$. Since $a\notin(N:_{R}M)$,
then by assumption, $a\notin Z_{(N:_{R}M)}(R)$ and $(0:_{M}a)=0$. It follows
by Theorem \ref{(N:M)} (4) that $(N:_{M}a)$ is a weakly $S$-prime submodule of
$M$. Therefore, $sm\in(N:_{M}a)=N$ by the maximality of $N$ and so $N$ is an
$S$-prime submodule of $M$.
\end{proof}

As $N=(N:M)M$ for any submodule $N$ of a multiplication $R$-module $M,$ we
have the following consequence of Theorem \ref{(N:M)}.

\begin{theorem}
\label{IM}Let $M$ be a faithful multiplication $R$-module and $N$ be a
submodule of $M$. The following are equivalent.
\end{theorem}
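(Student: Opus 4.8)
The plan is to prove the equivalences by assembling the parts of Theorem \ref{(N:M)} together with the multiplication identity $N=(N:_{R}M)M$, rather than arguing from the definition. Since $M$ is both faithful and multiplication, every structural hypothesis needed in Theorem \ref{(N:M)} is available, so I expect the argument to reduce to a short cycle of implications with no genuinely new computation.

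First I would establish that $N$ is a weakly $S$-prime submodule of $M$ if and only if $(N:_{R}M)$ is a weakly $S$-prime ideal of $R$. The forward implication is precisely the ``in particular'' clause of Theorem \ref{(N:M)}(1), which applies because $M$ is faithful; the reverse implication is Theorem \ref{(N:M)}(2), which applies because $M$ is multiplication. Together these give the equivalence at once.

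Next I would incorporate the formulation through a presentation ideal, namely that $N=IM$ for some weakly $S$-prime ideal $I$ of $R$ disjoint from $S$. The key here is the identity $(IM:_{R}M)=I$ for faithful multiplication modules, the same fact used in the proof of Theorem \ref{(N:M)}(3). Starting from a weakly $S$-prime submodule $N$, one takes $I=(N:_{R}M)$ and uses $N=(N:_{R}M)M$ to exhibit $N=IM$; conversely, from $N=IM$ with $I$ weakly $S$-prime, Theorem \ref{(N:M)}(3) yields that $IM=N$ is weakly $S$-prime. Should the statement also include the submodule-product condition of Theorem \ref{fm}, I would close that equivalence against condition (1) by a direct appeal to Theorem \ref{fm}.

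The main difficulty is bookkeeping rather than mathematics: I must track which of the hypotheses (faithful, multiplication, and disjointness from $S$) each cited part of Theorem \ref{(N:M)} requires, and check that the disjointness condition $(N:_{R}M)\cap S=\emptyset$ passes correctly between the submodule and ideal formulations through $N=(N:_{R}M)M$. Once these hypotheses are matched to the corresponding parts of Theorem \ref{(N:M)}, the proof is a straightforward citation chain.
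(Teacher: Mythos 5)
Your proposal is correct and essentially identical to the paper's treatment: the paper gives no separate proof at all, simply prefacing the theorem with the remark that, since $N=(N:_{R}M)M$ for any submodule of a multiplication module, the result is an immediate consequence of Theorem \ref{(N:M)} --- which is exactly your citation chain (parts (1) and (2) for the equivalence of the first two conditions, part (3) together with $(IM:_{R}M)=I$ for the presentation-ideal condition). Your hedge about Theorem \ref{fm} is unnecessary, since the statement lists only the three conditions you handled, but it does no harm.
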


\begin{enumerate}
\item $N$ is a weakly $S$-prime submodule of $M$.

\item $(N:_{R}M)$ is a weakly $S$-prime ideal of $R.$

\item $N=IM$ for some weakly $S$-prime ideal $I$ of $R.$
\end{enumerate}

For a next result, we need to recall the following lemma.

\begin{lemma}
\cite{Majed}\label{Majed} For an ideal $I$ of a ring $R$ and a submodule $N$
of a finitely generated faithful multiplication $R$-module $M$, the following hold.

\begin{enumerate}
\item $(IN:_{R}M)=I(N:_{R}M)$.

\item If $I$ is finitely generated faithful multiplication, then

\begin{enumerate}
\item $(IN:_{M}I)=N$.

\item Whenever $N\subseteq IM$, then $(JN:_{M}I)=J(N:_{M}I)$ for any ideal $J$
of $R$.
\end{enumerate}
\end{enumerate}
\end{lemma}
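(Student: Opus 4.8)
The plan is to reduce all three identities to the cancellation property enjoyed by finitely generated faithful multiplication modules. The central fact I would establish first is that, for such an $M$, the assignment $K\mapsto KM$ is an inclusion-preserving bijection from the ideals of $R$ onto the submodules of $M$, with inverse $N\mapsto(N:_{R}M)$; in particular $(KM:_{R}M)=K$ for every ideal $K$ of $R$, and $IM\subseteq JM$ forces $I\subseteq J$. This is the standard cancellation theorem for finitely generated faithful multiplication modules, proved either by the determinant (Cayley--Hamilton) trick applied to a finite generating set of $M$, or by localizing at each maximal ideal $\mathfrak{m}$, where $M_{\mathfrak{m}}$ becomes cyclic and faithfulness rules out the degenerate case.

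Granting this, part (1) is immediate: since $M$ is multiplication we have $N=(N:_{R}M)M$, hence $IN=I(N:_{R}M)M=KM$ with $K:=I(N:_{R}M)$, and applying $(KM:_{R}M)=K$ gives $(IN:_{R}M)=I(N:_{R}M)$. For part 2(a) I would invoke the same cancellation theorem, now applied to the ideal $I$ regarded as a finitely generated faithful multiplication $R$-module: such an $I$ is a cancellation ideal, so $IA\subseteq IB$ implies $A\subseteq B$ for submodules $A,B$. The inclusion $N\subseteq(IN:_{M}I)$ is trivial; conversely, if $m\in(IN:_{M}I)$ then $I(Rm)=Im\subseteq IN$, and cancellation yields $Rm\subseteq N$, that is, $m\in N$. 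Hence $(IN:_{M}I)=N$.

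Part 2(b) is where the real work lies. One inclusion is a direct computation: if $m=\sum_{k}j_{k}n_{k}$ with $j_{k}\in J$ and $In_{k}\subseteq N$, then $Im\subseteq\sum_{k}j_{k}(In_{k})\subseteq JN$, so $J(N:_{M}I)\subseteq(JN:_{M}I)$. The reverse inclusion $(JN:_{M}I)\subseteq J(N:_{M}I)$ is the main obstacle, and it is precisely here that the hypothesis $N\subseteq IM$ is needed. The cleanest route I foresee is to localize: both residuals and products commute with localization because $I$ and $M$ are finitely generated, so $(JN:_{M}I)_{\mathfrak{m}}=(J_{\mathfrak{m}}N_{\mathfrak{m}}:_{M_{\mathfrak{m}}}I_{\mathfrak{m}})$ and $(J(N:_{M}I))_{\mathfrak{m}}=J_{\mathfrak{m}}(N_{\mathfrak{m}}:_{M_{\mathfrak{m}}}I_{\mathfrak{m}})$ for each maximal ideal $\mathfrak{m}$. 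After localization $M_{\mathfrak{m}}$ is cyclic and, by faithfulness, $M_{\mathfrak{m}}\cong R_{\mathfrak{m}}$, while $I_{\mathfrak{m}}$ is principal, generated by a non-zero-divisor $t$ (again by faithfulness, since the annihilator of a finitely generated module commutes with localization); this turns the residual identity into an elementary computation with the single element $t$, and the condition $N_{\mathfrak{m}}\subseteq I_{\mathfrak{m}}M_{\mathfrak{m}}$ guarantees that $t$ can be cancelled without leaving $N_{\mathfrak{m}}$. Since equality of two submodules can be checked locally at all maximal ideals, the global identity $(JN:_{M}I)=J(N:_{M}I)$ follows.
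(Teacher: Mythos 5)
The paper never proves this lemma: it is imported verbatim from \cite{Majed}, so there is no internal argument to compare yours against; what follows assesses your proposal on its own terms.

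Your central cancellation fact is the right engine, and parts (1) and 2(b) are sound as outlined. For (1), the derivation from $(KM:_{R}M)=K$ is complete. For 2(b), each localization claim holds for the reason you give: $(A:_{M}I)$ commutes with localization because $I$ is finitely generated; annihilators commute with localization because $M$ and $I$ are finitely generated, so $M_{\mathfrak{m}}$ is cyclic (finitely generated multiplication modules are locally cyclic) and faithful, hence $M_{\mathfrak{m}}\cong R_{\mathfrak{m}}$, and $I_{\mathfrak{m}}=tR_{\mathfrak{m}}$ with $\mathrm{ann}(t)=\mathrm{ann}(I_{\mathfrak{m}})=(\mathrm{ann}\,I)_{\mathfrak{m}}=0$; then $N_{\mathfrak{m}}\subseteq tR_{\mathfrak{m}}$ gives $N_{\mathfrak{m}}=t(N_{\mathfrak{m}}:t)$ and regularity of $t$ yields $(J_{\mathfrak{m}}N_{\mathfrak{m}}:t)=J_{\mathfrak{m}}(N_{\mathfrak{m}}:t)$, so the hypothesis $N\subseteq IM$ enters exactly where you say it does.

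The one step that does not stand as written is in 2(a). From ``$I$ is a cancellation ideal'' you conclude that $IA\subseteq IB$ implies $A\subseteq B$ \emph{for submodules} $A,B$, and you apply this to $A=Rm$, $B=N$ inside $M$. The cancellation-ideal property is a statement about ideals of $R$ and does not transfer to submodules of an arbitrary module: take $R=\mathbb{Z}$, $I=2\mathbb{Z}$ (certainly a cancellation ideal), $M=\mathbb{Z}/2\mathbb{Z}$, $A=M$, $B=0$; then $IA=0\subseteq IB$ but $A\nsubseteq B$. In your setting the implication is nevertheless true, but the justification must route through the multiplication structure of $M$, i.e.\ through your own central fact: write $Rm=KM$ and $N=LM$ with $K=(Rm:_{R}M)$ and $L=(N:_{R}M)$; then $Im\subseteq IN$ reads $(IK)M\subseteq(IL)M$, cancellation for the module $M$ gives $IK\subseteq IL$, cancellation for the ideal $I$ gives $K\subseteq L$ (the inclusion version follows from the equality version via $I(K+L)=IL$), and hence $Rm=KM\subseteq LM=N$. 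With this two-line repair, 2(a) is complete and the whole proposal constitutes a valid proof.
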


\begin{proposition}
\label{IN}Let $I$ be a finitely generated faithful multiplication ideal of a
ring $R,$ $S$ a multiplicatively closed subset of $R$ and $N$ a submodule of a
finitely generated faithful multiplication $R$-module $M$.
\end{proposition}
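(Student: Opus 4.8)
The plan is to exploit that $M$ is finitely generated faithful multiplication in order to transport every assertion about submodules into an assertion about colon ideals, where the machinery already developed applies. Since $N$, $IN$ and the residual $(N:_M I)$ are all submodules of the same faithful multiplication module $M$, Theorem \ref{IM} applies to each of them uniformly: such a submodule is weakly $S$-prime in $M$ if and only if its colon ideal $(\,\cdot\,:_R M)$ is a weakly $S$-prime ideal of $R$. Thus the whole statement reduces to comparing the three ideals $(N:_R M)$, $(IN:_R M)$ and $((N:_M I):_R M)$ and deciding when each of them is weakly $S$-prime.

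The bridge between these ideals is Lemma \ref{Majed}. For the passage between $N$ and $IN$, part (1) gives $(IN:_R M) = I(N:_R M)$, so the question becomes whether $I(N:_R M)$ is weakly $S$-prime exactly when $(N:_R M)$ is; here the hypothesis that $I$ is finitely generated faithful multiplication is what permits cancelling $I$, via $(IN:_M I) = N$ from part (2)(a). For the residual $(N:_M I)$, assuming $N \subseteq IM$ so that part (2)(b) is available, I would compute its colon ideal by combining $(JN:_M I) = J(N:_M I)$ with $(IN:_M I) = N$ to re-express colons of $(N:_M I)$ in terms of colons of $N$. Once everything is phrased purely in terms of ideals, the conclusion follows from the ideal-level theory of weakly $S$-prime ideals, namely the ideal analogue of Theorem \ref{(N:M)} together with the characterisations of \cite{WS-prime}.

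The step I expect to be the main obstacle is the interaction of the \emph{weakly} hypothesis with multiplication by $I$. Unlike the honest $S$-prime case, weak $S$-primeness carries the side condition $0 \neq am \in N$, and multiplying a relation through by $I$, or dividing by it via the residual, can in principle turn a genuinely nonzero product into zero or conversely. So the delicate point is to track the zero-product case faithfully: I must check that $0 \neq IK \subseteq N$ can be promoted to a nonzero relation to which a weakly $S$-element of $N$ applies, and that the cancellation $(IN:_M I) = N$ does not silently discard a zero case. This is exactly where the hypotheses on $I$ enter: specialising Lemma \ref{Majed}(2)(a) to the zero submodule gives $(0:_M I) = 0$, so that $IX = 0$ forces $X = 0$ for any submodule $X$, and it is this cancellation that lets me match up the two nonzero requirements on opposite sides of each equivalence.
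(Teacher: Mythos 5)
Part (1) of your plan is essentially sound and close to the paper's proof: the paper handles $N=M$ exactly as you suggest, via Theorem \ref{IM} (then $(IN:_{R}M)=I(N:_{R}M)=I$ is weakly $S$-prime), and for proper $N$ it runs the cancellation you describe, only at the module level: from $0\neq am\in N$ and $Ann_{R}(I)=0$ it gets $0\neq Iam\subseteq IN$, applies Theorem \ref{char} to $IN$ (using $(IN:_{R}M)=I(N:_{R}M)\subseteq(N:_{R}M)$), and cancels $I$ by $(IN:_{M}I)=N$. Two caveats: the disjunction in the statement comes precisely from the case split $(N:_{R}M)=R$ versus $(N:_{R}M)$ proper, which your ``exactly when'' phrasing glosses over; and the ideal-level fact you lean on (that $I(N:_{R}M)$ weakly $S$-prime forces $(N:_{R}M)$ weakly $S$-prime when the latter is proper and disjoint from $S$) is not among the cited results of \cite{WS-prime}, so you would have to prove it by the same cancellation argument --- nothing is really saved by passing to ideals here.

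Part (2) contains a genuine gap. The hypothesis there is that $N$ is a weakly $S$-prime submodule of $IM$ --- primeness relative to the ambient module $IM$, whose defining condition involves $(N:_{R}IM)$ --- whereas your uniform reduction attaches to $N$ the ideal $(N:_{R}M)$, i.e.\ silently replaces ``weakly $S$-prime in $IM$'' by ``weakly $S$-prime in $M$''. These are different properties, and the reduced statement is false: take $R=M=\mathbb{Z}$, $I=2\mathbb{Z}$, $N=4\mathbb{Z}$, $S=\{1\}$; then $N$ is a weakly $S$-prime submodule of $IM=2\mathbb{Z}$ and $(N:_{M}I)=2\mathbb{Z}$ is weakly $S$-prime in $M$, consistent with part (2), yet $(N:_{R}M)=4\mathbb{Z}$ is not a weakly $S$-prime ideal of $\mathbb{Z}$. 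Theorem \ref{IM} simply cannot be applied to $N$ with ambient module $IM$ unless you first prove that $IM$ is itself a faithful multiplication $R$-module; faithfulness is immediate from $Ann_{R}(I)=Ann_{R}(M)=0$, but the multiplication property requires an argument (e.g.\ that $IM$ is finitely generated and locally cyclic) which the paper's toolkit does not supply. Granted that fact, your strategy would actually yield a slicker proof than the paper's, since part (2) collapses to the identity $(N:_{R}IM)=((N:_{M}I):_{R}M)$; without it, you must argue element-wise inside $IM$ as the paper does, using $(0:_{M}I)=0$ to preserve nonzero products, Lemma \ref{Majed}(2) to pass between $\left\langle m^{\prime}\right\rangle$ and $(\left\langle m^{\prime}\right\rangle :_{M}I)$, and $I(N:_{M}I)=(IN:_{M}I)=N$ to return to $N$.
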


\begin{enumerate}
\item If $IN$ is a weakly $S$-prime submodule of $M$ and $(N:_{R}M)\cap
S=\phi$, then either $I$ is a weakly $S$-prime ideal of $R$ or $N$ is a weakly
$S$-prime submodule of $M$.

\item $N$ is a weakly $S$-prime submodule of $IM$ if and only if $(N:_{M}I)$
is a weakly $S$-prime submodule of $M$.
\end{enumerate}

\begin{proof}
(1) Let $s\in S$ be weakly $S$-element of $IN$. Suppose $N=M$. In this case,
$I=I(N:_{R}M)=(IN:_{R}M)$ is a weakly $S$-prime ideal of $R$ by Theorem
\ref{IM}. Now, suppose that $N$ is proper. Hence, Lemma \ref{Majed} implies
$N=(IN:_{M}I)$ and so we conclude that $(N:_{R}M)=((IN:_{M}I):_{R}%
M)=(I(N:_{R}M):_{M}I)$. Suppose $a\in R,$ $m\in M$ such that $0\neq am\in N$
and $sa\notin(N:_{R}M).$ Since $I$ is faithful, then $(0:_{M}I)=Ann_{R}%
(I)M=0$, \cite{Majed} and so $0\neq Iam\subseteq IN.$ Since clearly
$sa\notin(IN:_{R}M)$ and $IN$ is a weakly $S$-prime submodule, $sIm\subseteq
IN$ by Theorem \ref{char}. By Lemma \ref{Majed} (2), we have $sm\in
(IN:_{M}I)=N$, and thus $N$ is a weakly $S$-prime submodule of $M$.

(2) Suppose $N$ is a weakly $S$-prime submodule of $IM$ with a weakly
$S$-element $s^{\prime}\in S$. Then $((N:_{M}I):_{R}M)\cap S=(N:_{R}IM)\cap
S=\phi$. Let $a\in R$ and $m\in M$ with $0\neq am\in$ $(N:_{M}I)$ and
$s^{\prime}a\notin((N:_{M}I):_{R}M)=(N:_{R}IM)$. If $amI=0$, then $am\in
(0_{M}:I)=Ann_{R}(I)M=0$, a contradiction. Thus, $0\neq amI\subseteq N$. Since
$N$ is a weakly $S$-prime submodule of $IM,$ Theorem \ref{char} yields that
$s^{\prime}mI\subseteq N$, and so $s^{\prime}m\in(N:_{M}I)$, as required.

Conversely, suppose $(N:_{M}I)$ is a weakly $S$-prime submodule of $M$ with a
weakly $S$-element $s^{\prime}\in S$. Then $(N:_{R}IM)\cap S=((N:_{M}%
I):_{R}M)\cap S=\phi$. Now, let $a\in R$ and $m^{\prime}\in IM$ such that
$0\neq am^{\prime}\in N$ and $s^{\prime}a\notin(N:_{R}IM)=((N:_{M}I):_{R}M)$.
Then $a(\left\langle m^{\prime}\right\rangle :_{M}I)=(\left\langle am^{\prime
}\right\rangle :_{M}I)\subseteq(N:_{M}I)$. If $a(\left\langle m^{\prime
}\right\rangle :_{M}I)=0$, then by (2) of Lemma \ref{Majed}, we have
$am^{\prime}\in a(Im^{\prime}:_{M}I)\subseteq a(\left\langle m^{\prime
}\right\rangle :_{M}I)=0$, a contradiction. Thus, $0\neq a(\left\langle
m^{\prime}\right\rangle :_{M}I)\subseteq$ $(N:_{M}I)$ and so $s^{\prime
}(\left\langle m^{\prime}\right\rangle :_{M}I)\subseteq(N:_{M}I)$ as
$s^{\prime}a\notin((N:_{M}I):_{R}M)$. Again, by Lemma \ref{Majed}, we conclude
that $s^{\prime}m^{\prime}\in(I\left\langle s^{\prime}m^{\prime}\right\rangle
:_{M}I)=Is^{\prime}(\left\langle m^{\prime}\right\rangle :_{M}I)\subseteq
I(N:_{M}I)=(IN:_{M}I)=N$. Therefore, $N$ is a weakly $S$-prime submodule of
$IM.$
\end{proof}

\begin{proposition}
\label{(I:s)}Let $S$ be a multiplicatively closed subset of a ring $R$ and $N$
be a submodule of an $R$-module $M$ such that $(N:_{R}M)\cap S=\phi$. If
$(N:_{M}s)$ is a weakly prime submodule of $M$ for some $s\in S$, then $N$ is
a weakly $S$-prime submodule of $M$. The converse holds for non-zero
submodules $N$ if $S\cap Z(M)=\emptyset.$
\end{proposition}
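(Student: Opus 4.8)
The plan is to prove both implications directly from the definition, keeping the same element $s$ throughout: in the forward direction the witness $s$ for which $(N:_{M}s)$ is weakly prime will serve as a weakly $S$-element of $N$, and in the converse a weakly $S$-element $s$ of $N$ will make $(N:_{M}s)$ weakly prime. Before anything else I would record the translation identity
\[
((N:_{M}s):_{R}M)=\{r\in R:\ sr\in(N:_{R}M)\},
\]
since it is exactly what converts ``$r\in((N:_{M}s):_{R}M)$'' into the statement ``$sr\in(N:_{R}M)$'' demanded by weak $S$-primeness, and vice versa.

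For the forward implication, suppose $(N:_{M}s)$ is weakly prime. Given $a\in R$ and $m\in M$ with $0\neq am\in N$, I would note that $N\subseteq(N:_{M}s)$ (because $sN\subseteq N$), so $0\neq am\in(N:_{M}s)$. Weak primeness then gives $a\in((N:_{M}s):_{R}M)$ or $m\in(N:_{M}s)$; by the displayed identity the first is precisely $sa\in(N:_{R}M)$ and the second is precisely $sm\in N$. Hence $s$ is a weakly $S$-element of $N$, and since $(N:_{R}M)\cap S=\emptyset$ is assumed, $N$ is weakly $S$-prime.

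For the converse, let $N\neq 0$ be weakly $S$-prime with weakly $S$-element $s$, and assume $S\cap Z(M)=\emptyset$, so that $s$, and hence $s^{2}$, is a non-zero-divisor on $M$. First I would check that $(N:_{M}s)$ is proper: otherwise $sM\subseteq N$ would force $s\in(N:_{R}M)\cap S$, contradicting the disjointness hypothesis. Now take $r\in R$, $m\in M$ with $0\neq rm\in(N:_{M}s)$; then $m\neq0$ and $srm\in N$, and since $s\notin Z(M)$ we get $0\neq r(sm)=srm\in N$. Applying weak $S$-primeness to the product $r\cdot(sm)$ yields either $sr\in(N:_{R}M)$, which by the displayed identity says $r\in((N:_{M}s):_{R}M)$, or $s(sm)=s^{2}m\in N$.

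The genuinely delicate point is this last case $s^{2}m\in N$, where I must descend to $sm\in N$ (that is, $m\in(N:_{M}s)$) rather than remain stuck one power of $s$ too high. The key move is that $s^{2}m\neq0$ (as $m\neq0$ and $s^{2}\notin Z(M)$), so I may invoke weak $S$-primeness a \emph{second} time, now on the product $s^{2}\cdot m$: this gives $s\cdot s^{2}=s^{3}\in(N:_{R}M)$ or $sm\in N$. The former is impossible, since $s^{3}\in S$ would violate $(N:_{R}M)\cap S=\emptyset$, so $sm\in N$ as required, and $(N:_{M}s)$ is weakly prime. This repeated-application trick, combined with using the disjointness $(N:_{R}M)\cap S=\emptyset$ to eliminate the $s^{3}$ branch, is the crux of the argument; the hypothesis $S\cap Z(M)=\emptyset$ is exactly what supplies the nonvanishing ($srm\neq0$ and $s^{2}m\neq0$) that licenses both appeals to the definition.
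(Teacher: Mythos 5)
Your proof is correct, and it differs from the paper's in one structured way worth noting. The forward direction is identical to the paper's. In the converse, both arguments multiply by $s$ (using $S\cap Z(M)=\emptyset$ to keep the relevant products nonzero) and apply the weakly $S$-prime property \emph{twice}, using $(N:_{R}M)\cap S=\emptyset$ to discard an $s^{3}$ branch; but the decompositions are dual. You write $0\neq srm\in N$ as $r\cdot(sm)$, so your immediate branch is $sr\in(N:_{R}M)$, i.e. $r\in((N:_{M}s):_{R}M)$, and your delicate branch is $s^{2}m\in N$, which you settle by a second application of the definition to the product $s^{2}\cdot m$, yielding $s^{3}\in(N:_{R}M)$ (impossible by disjointness) or $sm\in N$. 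The paper writes the same element as $(sr)\cdot m$, so its immediate branch is $sm\in N$ and its delicate branch is $s^{2}r\in(N:_{R}M)$; to descend from $s^{2}r$ to $sr$ it must then apply the weakly $S$-prime property to the containment $0\neq s^{2}(rM)\subseteq N$, that is, in the element-times-submodule form of Theorem \ref{char}(3). Your choice of decomposition keeps the second application at the level of single elements, so only the bare definition is needed and no appeal to Theorem \ref{char}; this makes the argument slightly more self-contained (and your explicit check that $(N:_{M}s)$ is proper, which weak primeness requires, is a point the paper leaves implicit). Both proofs rest on the same two pillars --- $S\cap Z(M)=\emptyset$ for nonvanishing and $(N:_{R}M)\cap S=\emptyset$ to eliminate $s^{3}$ --- so the difference is one of economy rather than substance.
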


\begin{proof}
Suppose $(N:_{M}s)$ is a weakly prime submodule of $M$ for some $s\in S$ and
let $a\in R,$ $m\in M$ such that $0\neq am\in N\subseteq(N:_{M}s)$. Then
either $a\in((N:_{M}s):_{R}M)=((N:_{R}M):_{R}s)$ or $m\in(N:_{M}s)$ and so
either $sa\in(N:_{R}M)$ or $sm\in N$ as required. Conversely, suppose
$N\neq0_{M}$ is weakly $S$-prime submodule of $M$ with weakly $S$-element
$s\in S$. Let $a\in R$ and $m\in M$ such that $0\neq am\in(N:_{M}s)$. Since
$S\cap Z(M)=\emptyset$, we have $0\neq sam\in N$ which implies either
$s^{2}a\in(N:_{R}M)$ or $sm\in N$. If $sm\in N$, then $m\in(N:_{M}s)$ and we
are done. Suppose $s^{2}a\in(N:_{R}M).$ If $s^{2}aM=0$, then $s^{2}\in S\cap
Z(M)$, a contradiction. Hence, $0\neq s^{2}aM\subseteq N$ implies either
$s^{3}\in(N:_{R}M)$ or $saM\subseteq N$. But $(N:_{R}M)\cap S=\phi$ implies
$saM\subseteq N$ and so $a\in(N:_{R}sM)=((N:_{M}s):_{R}M)$. Therefore,
$(N:_{M}s)$ is a weakly prime submodule of $M$.
\end{proof}

If $S\cap Z(M)\neq\emptyset$, then the converse of Proposition \ref{(I:s)}
need not be true as we can see in the following example.

\begin{example}
\label{ex11}Consider the $%
%TCIMACRO{\U{2124} }%
%BeginExpansion
\mathbb{Z}
%EndExpansion
$-module $M=%
%TCIMACRO{\U{2124} }%
%BeginExpansion
\mathbb{Z}
%EndExpansion
\times%
%TCIMACRO{\U{2124} }%
%BeginExpansion
\mathbb{Z}
%EndExpansion
_{6}$ and let $N=\left\langle 0\right\rangle \times\left\langle \bar
{0}\right\rangle $. Then $N$ is a weakly $S$-prime submodule of $M$ for
$S=\left\{  3^{n}:n\in%
%TCIMACRO{\U{2115} }%
%BeginExpansion
\mathbb{N}
%EndExpansion
\right\}  $. Now, for each $n\in%
%TCIMACRO{\U{2115} }%
%BeginExpansion
\mathbb{N}
%EndExpansion
$, we have clearly $(N:_{M}3^{n})\mathbf{=}\left\langle 0\right\rangle
\mathbf{\times}\left\langle \bar{2}\right\rangle $ which is not a weakly prime
submodule of $M$. Indeed, $2.(0,\bar{1})\in(N:_{M}3^{n})$ but $2\notin
((N:_{M}3^{n}):_{R}M)=\left\langle 0\right\rangle $ and $(0,\bar{1}%
)\notin(N:_{M}3^{n})$. We note that $S\cap Z(M)=S\neq\emptyset$.
\end{example}

\begin{proposition}
\label{p1}Let $M$ be a\ faithful multiplication $R$-module and $S$ be a
multiplicatively closed subset of $R$.
\end{proposition}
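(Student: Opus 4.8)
The plan is to run everything through the dictionary furnished by Theorem \ref{IM}: because $M$ is faithful and multiplication, a submodule $N$ is weakly $S$-prime exactly when the ideal $(N:_{R}M)$ is weakly $S$-prime, and conversely every weakly $S$-prime ideal $I$ produces the weakly $S$-prime submodule $IM$ with $(IM:_{R}M)=I$ (Lemma \ref{Majed}(1)). So I would translate each assertion about $N$ into the matching assertion about $(N:_{R}M)$, settle it with the known structure theory of weakly $S$-prime \emph{ideals} from \cite{WS-prime}, and then push the conclusion back along $I\mapsto IM$. For any part phrased through products or residuals of submodules I would instead invoke the product characterization of Theorem \ref{fm}, rewriting $KL=(K:_{R}M)(L:_{R}M)M$ so that a containment $0\neq KL\subseteq N$ is replaced by the ideal containment $0\neq (K:_{R}M)(L:_{R}M)\subseteq (N:_{R}M)$.

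First I would record the harmless reductions that make the correspondence inclusion-preserving and bijective on the submodules in play: $(N:_{R}M)\cap S=\emptyset$ is equivalent to $N\cap SM=\emptyset$ (the observation opening the proof of Theorem \ref{fm}), and faithfulness gives $(0:_{M}I)=\mathrm{Ann}_{R}(I)M=0$, which is what lets me move an ideal $I$ in and out of residuals via Lemma \ref{Majed}(2), exactly as in the proof of Proposition \ref{IN}. With these in hand, every claim that merely manipulates $(N:_{M}I)$, $IM$, or $(N:_{R}M)$ becomes a routine application of Lemma \ref{Majed} together with the transported ideal fact.

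The delicate point---and the step I expect to be the real obstacle---is keeping track of the weakly $S$-element across the translation, together with the split between $0\neq am\in N$ and $am=0$ that separates \emph{weakly} $S$-prime from $S$-prime. The element $s$ that witnesses the weakly $S$-prime property of $(N:_{R}M)$ need not automatically be the one witnessing it for $N$, so I would fix a single witness $s$ at the outset, verify that it survives each passage through the correspondence, and promote it to a power $s^{k}$ only when an iteration of Theorem \ref{char} demands it, mirroring the ``$s^{2}a$, then $s^{3}$'' bookkeeping of Proposition \ref{(I:s)}. The zero-product case must be handled on its own so that the non-triviality hypothesis is never silently dropped; once that case is isolated, assembling the pieces is mechanical, and the faithful multiplication hypothesis is used precisely to discard the degenerate annihilator cases that would otherwise invalidate the equivalence.
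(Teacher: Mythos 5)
Your plan coincides with the paper's proof: the paper passes to the ideal $(N:_{R}M)$ via Theorem~\ref{(N:M)}(1) (together with \cite[Proposition 2.9(ii)]{S-prime subm} to transfer ``not $S$-prime'' across the same faithful-multiplication dictionary), quotes the ideal-level facts from \cite{WS-prime} that $s(N:_{R}M)\sqrt{0_{R}}=0_{R}$ and $s(N:_{R}M)(K:_{R}M)=0_{R}$ for weakly $S$-prime ideals that are not $S$-prime, and pulls the conclusion back along $N=(N:_{R}M)M$ and $NK=(N:_{R}M)(K:_{R}M)M$. The bookkeeping you flag as the delicate point (powers of $s$, separate handling of zero products) never materializes, since the cited ideal results already supply the single witness $s\in S$ and the pullback is just multiplication by $M$.
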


\begin{enumerate}
\item If $N$ is a weakly $S$-prime submodule of $M$ that is not $S$-prime,
then $s\sqrt{0_{R}}N=0_{M}$ for some $s\in S$.

\item If $N$ and $K$ are two weakly $S$-prime submodules of $M$ that are not
$S$-prime, then $sNK=0_{M}$ for some $s\in S$.
\end{enumerate}

\begin{proof}
(1) Let $N$ be a weakly $S$-prime submodule of $M$ which is not $S$-prime.
Then by (1) of Theorem \ref{(N:M)} and \cite[Proposition 2.9 (ii)]{S-prime
subm}, $(N:_{R}M)$ is a weakly $S$-prime ideal of $R$ that is not $S$-prime.
Hence, we get $s(N:_{R}M)\sqrt{0_{R}}=0_{R}$ by \cite[Proposition 9]{WS-prime}
and thus, $sN\sqrt{0_{R}}=s(N:_{R}M)M\sqrt{0_{R}}=0_{R}M=0_{M}.$

(2) Since $N$ and $K$ are two weakly $S$-prime submodules that are not
$S$-prime, $(N:_{R}M)$ and $(K:_{R}M)$ are weakly $S$-prime ideals of $R$ that
are not $S$-prime by Theorem \ref{(N:M)} and \cite[Proposition 2.9
(ii)]{S-prime subm}. Hence, there exists some $s\in S$ such that
$s(N:_{R}M)(K:_{R}M)=0_{R}$ by \cite[Corollary 11]{WS-prime} and so $sNK=0.$
\end{proof}

\begin{corollary}
Let $M$ be a\ faithful multiplication $R$-module, $S$ be a multiplicatively
closed subset of a ring $R.$ If $N$ is a weakly $S$-prime submodule of $M$,
then either $N\subseteq\sqrt{0_{R}}M$ or $s\sqrt{0_{R}}M\subseteq N$ for some
$s\in S.$ Additionally, if $R$ is a reduced ring, then $N=0_{M}$ or $N$ is
$S$-prime$.$
\end{corollary}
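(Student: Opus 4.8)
The plan is to reduce this corollary directly to the results already established for weakly $S$-prime ideals, exactly as the proofs of Proposition \ref{p1} and Theorem \ref{IM} do. Since $M$ is faithful multiplication and $N$ is a weakly $S$-prime submodule, Theorem \ref{IM} gives that $(N:_{R}M)$ is a weakly $S$-prime ideal of $R$. The strategy is to invoke the ideal-level dichotomy for weakly $S$-prime ideals from \cite{WS-prime} and then translate it back to submodules using the identity $N=(N:_{R}M)M$.

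First I would recall that any weakly $S$-prime ideal $I$ of $R$ satisfies a dichotomy of the form: either $I\subseteq\sqrt{0_{R}}$ or there exists $s\in S$ with $sI\supseteq s\sqrt{0_{R}}$, i.e. $s\sqrt{0_{R}}\subseteq I$ — this should be the ideal-theoretic statement corresponding to \cite[Proposition 9]{WS-prime} that was already used in Proposition \ref{p1}. Applying this to $I=(N:_{R}M)$, I get either $(N:_{R}M)\subseteq\sqrt{0_{R}}$ or $s\sqrt{0_{R}}\subseteq(N:_{R}M)$ for some $s\in S$. Multiplying through by $M$ and using $N=(N:_{R}M)M$ together with $\sqrt{0_{R}}M$ as the module-level object, the first case yields $N=(N:_{R}M)M\subseteq\sqrt{0_{R}}M$, and the second yields $s\sqrt{0_{R}}M\subseteq(N:_{R}M)M=N$. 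This proves the main dichotomy.

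For the additional claim, I would assume $R$ is reduced so $\sqrt{0_{R}}=0$. Then $\sqrt{0_{R}}M=0_{M}$, and the dichotomy collapses: either $N\subseteq 0_{M}$, i.e. $N=0_{M}$, or $s\cdot 0_{M}=0_{M}\subseteq N$ which is vacuous and gives no information. To get that $N$ is $S$-prime in the second case, I would use the characterization via $(N:_{R}M)$: a weakly $S$-prime ideal that is not $S$-prime forces $s(N:_{R}M)\sqrt{0_{R}}=0$, but when $R$ is reduced this degenerate condition becomes trivial and one checks directly that a weakly $S$-prime ideal in a reduced ring is automatically $S$-prime unless it is the zero ideal. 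Concretely, I would argue the contrapositive through Proposition \ref{p1}(1): if $N$ is weakly $S$-prime but not $S$-prime, then $s\sqrt{0_{R}}N=0_{M}$, which under reducedness says $s\cdot 0\cdot N=0_{M}$ holds vacuously, so I instead trace back to $(N:_{R}M)$ being weakly $S$-prime and not $S$-prime, invoke \cite[Proposition 9]{WS-prime} to conclude $(N:_{R}M)=0$ (since reducedness kills the nilpotent alternative), and hence $N=(N:_{R}M)M=0_{M}$.

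The main obstacle I anticipate is the final bookkeeping in the reduced case: one must be careful that "not $S$-prime" genuinely forces $N=0_{M}$ rather than merely giving a vacuous equation. The cleanest route is to phrase the whole argument at the level of the ideal $(N:_{R}M)$, where the corresponding statement for weakly $S$-prime ideals in a reduced ring should already say such an ideal is either zero or $S$-prime; then Theorem \ref{IM} transfers the conclusion back to $N$ verbatim. The translation steps $N=(N:_{R}M)M$ and $\sqrt{0_{R}}M=\sqrt{0_{R}}\cdot M$ are routine, so I would keep those brief and concentrate the exposition on correctly applying the reduced-ring dichotomy for ideals.
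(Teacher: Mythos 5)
Your treatment of the main dichotomy is correct and is exactly the paper's own argument: pass to the ideal $(N:_{R}M)$, which is weakly $S$-prime because $M$ is faithful multiplication (the paper quotes Theorem \ref{(N:M)}(1); your Theorem \ref{IM} is the same statement in this setting), invoke the ideal-level dichotomy from \cite{WS-prime}, and multiply back through $M$ using $N=(N:_{R}M)M$. One small bookkeeping point: that dichotomy is Corollary 6 of \cite{WS-prime}, not Proposition 9; Proposition 9 is the different statement that a weakly $S$-prime ideal $I$ which is not $S$-prime satisfies $sI\sqrt{0_{R}}=0_{R}$.

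The reduced-ring addendum is where your proposal has a genuine gap. You propose to deduce $(N:_{R}M)=0$ from Proposition 9 of \cite{WS-prime}, saying "reducedness kills the nilpotent alternative." But when $R$ is reduced, the conclusion of Proposition 9 reads $s(N:_{R}M)\cdot(0)=(0)$, a tautology that holds for \emph{every} ideal; it contains no alternative to kill and cannot distinguish $(N:_{R}M)=0$ from $(N:_{R}M)\neq0$. You notice this vacuity yourself earlier in the paragraph and then assert the conclusion anyway, which is a non sequitur; the same objection applies to using Proposition \ref{p1}(1) at the module level. Worse, the $s$-weakened results actually quoted in this paper (Corollary 11 of \cite{WS-prime} with the two ideals equal, or Proposition \ref{p1}(2) with $N=K$) only yield $s(N:_{R}M)^{2}=0$, hence in a reduced ring $s(N:_{R}M)=0$ and $sN=0_{M}$, which is strictly weaker than $N=0_{M}$. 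What the reduced case needs is the unweakened Anderson--Smith-type fact underlying Corollary 6 of \cite{WS-prime}: if a weakly $S$-prime ideal $I$ (with witness $s$) satisfies $I^{2}\neq0$, then $I$ is $S$-prime with the same witness (the standard proof perturbs the test elements by elements of $I$); equivalently, a weakly $S$-prime ideal that is not $S$-prime satisfies $I^{2}=0$, hence $I\subseteq\sqrt{0_{R}}$. Granting that: if $R$ is reduced and $N\neq0_{M}$, then $(N:_{R}M)\neq0$ since $N=(N:_{R}M)M$, so $(N:_{R}M)^{2}\neq0$ by reducedness, so $(N:_{R}M)$ is $S$-prime, and finally $N$ is $S$-prime by \cite[Proposition 2.9 (ii)]{S-prime subm} --- note this last transfer is not given by Theorem \ref{IM}, which concerns only weakly $S$-prime submodules. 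In fairness, the paper's own proof is silent on this part ("we are done"), but your write-up commits to a specific deduction that does not hold.
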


\begin{proof}
Suppose that $N$ is a weakly $S$-prime submodule of $M$. Then from Theorem
\ref{(N:M)} (1), $(N:_{R}M)$ is a weakly $S$-prime ideal of $R$ and by
\cite[Corollary 6]{WS-prime}, we conclude either $(N:_{R}M)\subseteq
\sqrt{0_{R}}$ or $s\sqrt{0_{R}}\subseteq(N:_{R}M).$ Since $N=(N:_{R}M)M$, we
are done.
\end{proof}

\begin{proposition}
\label{loc}Let $N$ be a submodule of an $R$-module $M$ and $S$ be a
multiplicatively closed subset of $R$ with $Z(M)\cap S=\emptyset$.
\end{proposition}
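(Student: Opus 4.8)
Since the label and the hypothesis $Z(M)\cap S=\emptyset$ point to localization, I read the conclusion as the assertion that $S^{-1}N$ is a weakly prime submodule of the $S^{-1}R$-module $S^{-1}M$ whenever $N$ is weakly $S$-prime in $M$ (recall that over $S^{-1}R$, where the image of $S$ consists of units, weak $S$-primeness and weak primeness coincide by the remark following the definition). The plan is to fix a weakly $S$-element $s\in S$ of $N$ and verify the weakly prime condition for $S^{-1}N$ directly. A preliminary point is properness of $S^{-1}N$ in $S^{-1}M$, which I would extract from $(N:_{R}M)\cap S=\emptyset$; this is the only place where finite generation (or an equivalent mildness assumption on $M$) might be needed, since in general $(S^{-1}N:_{S^{-1}R}S^{-1}M)$ only contains $S^{-1}(N:_{R}M)$.

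The core of the argument is a clearing-of-denominators computation. Given $\frac{a}{t}\in S^{-1}R$ and $\frac{m}{u}\in S^{-1}M$ with $\frac{0}{1}\neq\frac{a}{t}\cdot\frac{m}{u}\in S^{-1}N$, I would write $\frac{am}{tu}=\frac{n}{w}$ with $n\in N$ and $w\in S$, producing some $v\in S$ with $(vw)am=(vtu)n\in N$. The hypothesis $Z(M)\cap S=\emptyset$ enters decisively here: since no element of $S$ annihilates a nonzero element of $M$, the relation $\frac{am}{tu}\neq 0$ forces $am\neq 0$, and then $(vw)am\neq 0$ as well. Thus $0\neq(vwa)m\in N$ with $vwa\in R$ and $m\in M$, and the weakly $S$-prime property of $N$ gives $s(vwa)\in(N:_{R}M)$ or $sm\in N$.

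It then remains to push each alternative back down to $S^{-1}M$. If $s(vwa)\in(N:_{R}M)$, then $\frac{svwa}{1}\in(S^{-1}N:_{S^{-1}R}S^{-1}M)$, and since $\frac{a}{t}=\frac{1}{svwt}\cdot\frac{svwa}{1}$ the ideal property yields $\frac{a}{t}\in(S^{-1}N:_{S^{-1}R}S^{-1}M)$; if $sm\in N$, then $\frac{m}{u}=\frac{1}{su}\cdot\frac{sm}{1}\in S^{-1}N$. Either way the weakly prime condition holds. I expect the main obstacle to be purely bookkeeping: keeping the auxiliary denominators $v,w$ straight alongside the fixed weakly $S$-element $s$, and---most importantly---faithfully transporting the nonvanishing of the localized product to a genuine nonzero product inside $N$. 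That transport is exactly what $Z(M)\cap S=\emptyset$ guarantees, and it is the single hypothesis on which the whole proof turns.
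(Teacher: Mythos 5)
Your proposal reconstructs and proves only a fragment of the proposition. In the paper, Proposition \ref{loc} makes two assertions: (1) if $N$ is weakly $S$-prime, then $S^{-1}N$ is a weakly prime submodule of $S^{-1}M$ \emph{and} there exists $s\in S$ such that $(N:_{M}t)\subseteq(N:_{M}s)$ for all $t\in S$; and (2) if $M$ is finitely generated, the converse of (1) holds. You prove only the first conclusion of (1); for that piece your clearing-of-denominators argument is correct and is essentially the paper's own proof (fix a weakly $S$-element $s$, clear denominators to get $0\neq (vwa)m\in N$, apply weak $S$-primality, push each alternative back into the localization). The properness issue you flag does not require finite generation: if $S^{-1}N=S^{-1}M$, then each $0\neq m\in M$ admits $x\in S$ with $xm\in N$, and $xm\neq 0$ since $x\notin Z(M)$, so weak $S$-primality and $(N:_{R}M)\cap S=\emptyset$ force $sm\in N$; hence $sM\subseteq N$, a contradiction. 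But the omissions are genuine gaps: the residual statement and all of part (2) are missing, and (2) is not a routine reversal --- it uses $(S^{-1}N:_{S^{-1}R}S^{-1}M)=S^{-1}(N:_{R}M)$ (this is where finite generation enters) and then uses precisely the residual containment $(N:_{M}t)\subseteq(N:_{M}s)$ from (1) to upgrade ``$va\in(N:_{R}M)$ for some $v\in S$'' and ``$wm\in N$ for some $w\in S$'' to statements about the single fixed $s$. So the second conclusion of (1) is not decoration; it is the lemma that makes the converse work.

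You also misattribute the role of the hypothesis $Z(M)\cap S=\emptyset$. For the statement you prove it is not needed at all, and the paper says so explicitly: $\frac{am}{tu}\neq 0_{S^{-1}M}$ means by definition that $x(am)\neq 0$ for every $x\in S$, so $(vw)am\neq 0$ automatically, with no assumption on zero-divisors. The hypothesis earns its keep exactly in the parts you left out: in proving $(N:_{M}t)\subseteq(N:_{M}s)$ one needs $tm\neq 0$ for $0\neq m\in(N:_{M}t)$, which holds because $t\notin Z(M)$; and in part (2) one needs $0\neq am\in N$ to imply $0\neq\frac{a}{1}\frac{m}{1}\in S^{-1}N$, which again holds because no element of $S$ annihilates $am$. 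Your closing claim that this hypothesis ``is the single hypothesis on which the whole proof turns'' is therefore inverted: it is irrelevant to the fragment you proved (except for properness, as noted above) and indispensable for the fragments you did not.
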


\begin{enumerate}
\item If $N$ is a weakly $S$-prime submodule of $M$, then $S^{-1}N$ is a
weakly prime submodule of $S^{-1}M$ and there exists an $s\in S$ such that
$(N:_{M}t)\subseteq(N:_{M}s)$ for all $t\in S$.

\item If $M$ is finitely generated, then the converse of (1) holds.
\end{enumerate}

\begin{proof}
(1) Suppose $s\in S$ is a weakly $S$-element of $N$. In proving that $S^{-1}N$
is a weakly prime submodule of $S^{-1}M$ we do not need the assumption
$Z(M)\cap S=\emptyset$. Let $0_{S^{-1}M}\neq\frac{r}{s_{1}}\frac{m}{s_{2}}\in
S^{-1}N$ for some $\frac{r}{s_{1}}\in S^{-1}R$ and $\frac{m}{s_{2}}\in
S^{-1}M.$ Then $urm\in N$ for some $u\in S$. If $urm=0$, then $\frac{rm}%
{s_{1}s_{2}}=\frac{urm}{us_{1}s_{2}}=0_{S^{-1}M}$, a contradiction. Hence,
$0\neq urm\in N$ yields either $sur\in(N:_{R}M)$ or $sm\in N.$ Thus, $\frac
{r}{s_{1}}=\frac{sur}{sus_{1}}\in S^{-1}(N:_{R}M)\subseteq(S^{-1}N:_{S^{-1}%
R}S^{-1}M)$ or $\frac{m}{s_{2}}=\frac{sm}{ss_{2}}\in S^{-1}N$ and so $S^{-1}N$
is a weakly prime submodule of $S^{-1}M.$ Now, let $t\in S$ and $m\in
(N:_{M}t).$ Then $0\neq tm\in N$ as $Z(M)\cap S=\emptyset$ and so
$st\in(N:_{M}M)\cap S$ or $sm\in N.$ Since the first one gives a
contradiction, we have $m\in(N:_{M}s).$ Thus, $(N:_{M}t)\subseteq(N:_{M}s)$
for all $t\in S$.

(2) Suppose $M$ is finitely generated choose $s\in S$ as in (1). If
$(N:_{R}M)\cap S\neq\emptyset$, then clearly $S^{-1}N=S^{-1}M$, a
contradiction. Let $0\neq am\in N$ for some $a\in R$ and $m\in M$. Since
$Z(M)\cap S=\emptyset,$ we have $0\neq\frac{a}{1}\frac{m}{1}\in S^{-1}N$. By
assumption, either $\frac{a}{1}\in(S^{-1}N:_{S^{-1}R}S^{-1}M)=S^{-1}(N:_{R}M)$
as $M$ is finitely generated or $\frac{m}{1}\in S^{-1}N.$ Hence, $va\in
(N:_{R}M)$ for some $v\in S$ or $wm\in N$ for some $w\in S$. If $va\in
(N:_{R}M)$, then our hypothesis implies $aM\subseteq(N:_{M}v)\subseteq
(N:_{M}s)$ and so $sa\in(N:_{R}M).$ If $wm\in N$, then again $m\in
(N:_{M}w)\subseteq(N:_{M}s)$, and so $sm\in N$. Therefore, $N$ is a weakly
$S$-prime submodule of $M.$
\end{proof}

However, $S^{-1}N$ is a weakly prime submodule of $S^{-1}M$ does not imply
that $N$ is a weakly prime submodule of $M$. For example, it was shown in
\cite[Example 2.4]{S-prime subm} that $N=%
%TCIMACRO{\U{2124} }%
%BeginExpansion
\mathbb{Z}
%EndExpansion
\times\left\{  0\right\}  $ is not a (weakly) $S$-prime submodule of the $%
%TCIMACRO{\U{2124} }%
%BeginExpansion
\mathbb{Z}
%EndExpansion
$-module $%
%TCIMACRO{\U{211a} }%
%BeginExpansion
\mathbb{Q}
%EndExpansion
\times%
%TCIMACRO{\U{211a} }%
%BeginExpansion
\mathbb{Q}
%EndExpansion
$ where $S=%
%TCIMACRO{\U{2124} }%
%BeginExpansion
\mathbb{Z}
%EndExpansion
\setminus\left\{  0\right\}  $. But $S^{-1}N$ is a weakly prime submodule of
the vector space (over $S^{-1}%
%TCIMACRO{\U{2124} }%
%BeginExpansion
\mathbb{Z}
%EndExpansion
=%
%TCIMACRO{\U{211a} }%
%BeginExpansion
\mathbb{Q}
%EndExpansion
)$ $S^{-1}(%
%TCIMACRO{\U{211a} }%
%BeginExpansion
\mathbb{Q}
%EndExpansion
\times%
%TCIMACRO{\U{211a} }%
%BeginExpansion
\mathbb{Q}
%EndExpansion
)$.

\begin{remark}
\label{r}Let $M$ be an $R$-module and $S$, $T$ be two multiplicatively closed
subsets of $R$ with $S\subseteq T$. If $N$ is a weakly $S$-prime submodule of
$M$ and $(N:_{R}M)\cap T=\emptyset$, then $N$ is a weakly $T$-prime submodule
of $M.$
\end{remark}

Let $S$ be a multiplicatively closed subset of a ring $R$. The saturation of
$S$ is the set $S^{\ast}=\{x\in R:xy\in S$ for some $y\in R$\}, see
\cite{Gilmer}. It is clear that $S^{\ast}$ is a multiplicatively closed subset
of $R$ and that $S\subseteq S^{\ast}$.

\begin{proposition}
Let $S$ be a multiplicatively closed subset of a ring $R$ and $N$ be a
submodule of an $R$-module $M$ such that $(N:_{R}M)\cap S=\emptyset$. Then $N$
is a weakly $S$-prime submodule of $M$ if and only if $N$ is a weakly
$S^{\ast}$-prime submodule of $M.$
\end{proposition}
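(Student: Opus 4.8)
The plan is to prove both implications by bookkeeping with the saturation, leaning on the fact that $(N:_{R}M)$ is an ideal and $N$ is a submodule (so both absorb scalar multiplication), together with the inclusion $S\subseteq S^{\ast}$.

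For the forward implication, suppose $N$ is weakly $S$-prime. The only thing not immediate from Remark \ref{r} is the disjointness condition $(N:_{R}M)\cap S^{\ast}=\emptyset$, which is needed even to speak of $N$ being weakly $S^{\ast}$-prime. I would establish this first: if some $x\in(N:_{R}M)\cap S^{\ast}$ existed, then $xy\in S$ for some $y\in R$, and since $(N:_{R}M)$ is an ideal we would have $xy\in(N:_{R}M)\cap S$, contradicting the standing hypothesis $(N:_{R}M)\cap S=\emptyset$. With $(N:_{R}M)\cap S^{\ast}=\emptyset$ in hand and $S\subseteq S^{\ast}$, Remark \ref{r} (applied with $T=S^{\ast}$) immediately gives that $N$ is weakly $S^{\ast}$-prime.

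For the converse, suppose $N$ is weakly $S^{\ast}$-prime with weakly $S^{\ast}$-element $s^{\ast}\in S^{\ast}$. I would produce a weakly $S$-element explicitly: since $s^{\ast}\in S^{\ast}$, choose $y\in R$ with $s:=s^{\ast}y\in S$. Now take $a\in R$ and $m\in M$ with $0\neq am\in N$. Applying the weakly $S^{\ast}$-prime property to $s^{\ast}$ yields $s^{\ast}a\in(N:_{R}M)$ or $s^{\ast}m\in N$. In the first case $sa=y(s^{\ast}a)\in(N:_{R}M)$ because $(N:_{R}M)$ is an ideal; in the second case $sm=y(s^{\ast}m)\in N$ because $N$ is a submodule. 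Hence $s$ is a weakly $S$-element of $N$, and since the disjointness $(N:_{R}M)\cap S=\emptyset$ is part of the hypotheses (and anyway follows from $S\subseteq S^{\ast}$), we conclude that $N$ is weakly $S$-prime.

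There is no serious obstacle here; the one point that must not be overlooked is the verification of the disjointness condition $(N:_{R}M)\cap S^{\ast}=\emptyset$ in the forward direction, which is precisely where the ideal-absorption of $(N:_{R}M)$ is used.
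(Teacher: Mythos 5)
Your proof is correct and follows essentially the same route as the paper: the paper also handles the direction from weakly $S^{\ast}$-prime to weakly $S$-prime by picking $r\in R$ with $s=s^{\ast}r\in S$ and using absorption in $(N:_{R}M)$ and $N$, and handles the other direction by verifying $(N:_{R}M)\cap S^{\ast}=\emptyset$ via the same saturation argument and then invoking Remark \ref{r}. The only difference is cosmetic (you present the two implications in the opposite order and spell out the ideal/submodule absorption more explicitly).
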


\begin{proof}
Let $N$ be a weakly $S^{\ast}$-prime submodule of $M$ with a weakly
$S$-element $s^{\ast}\in S^{\ast}$. Choose $r\in R$ such that $s=s^{\ast}r\in
S$. Suppose $0\neq am\in N$ for some $a\in R$ and $m\in M$. Then either
$s^{\ast}a\in(N:_{R}M)$ or $s^{\ast}m\in N$. Thus, $sa\in(N:_{R}M)$ or $sm\in
N$ and we are done. Conversely, suppose $N$ is weakly $S^{\ast}$-prime. By
using Remark \ref{r}, it is enough to prove that $(N:_{R}M)\cap S^{\ast
}=\emptyset$. Suppose there exists $s^{\ast}\in$ $(N:_{R}M)\cap S^{\ast}$.
Then there is $r\in R$ such that $s=s^{\ast}r\in(N:_{R}M)\cap S$, a contradiction.
\end{proof}

\begin{lemma}
\label{lem}Let $S$ be a multiplicatively closed subset of a ring $R.$ If $I$
is a weakly $S$-prime ideal of $R$ and $\{0_{R}\}$ is an $S$-prime ideal of
$R$, then $\sqrt{I}$ is an $S$-prime ideal of $R.$
\end{lemma}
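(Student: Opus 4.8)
The plan is to exhibit a single element of $S$ that witnesses the $S$-prime property for $\sqrt{I}$, assembled from the two hypotheses. Let $s_1\in S$ be a weakly $S$-element of $I$ and let $s_2\in S$ be an $S$-element of the $S$-prime ideal $\{0_R\}$, and set $s=s_1 s_2\in S$. Before checking the main condition I would first verify that $\sqrt{I}\cap S=\emptyset$: if some $x$ lay in this intersection, then $x^n\in I$ for some $n\geq 1$ while $x^n\in S$ (since $S$ is multiplicatively closed), contradicting $I\cap S=\emptyset$.

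For the main step, suppose $a,b\in R$ with $ab\in\sqrt{I}$. Then $a^n b^n=(ab)^n\in I$ for some $n\geq 1$, and I would split into two cases according to whether $a^n b^n$ vanishes. If $a^n b^n\neq 0$, then $0\neq a^n b^n\in I$ and the weakly $S$-prime property of $I$ gives $s_1 a^n\in I$ or $s_1 b^n\in I$; since $(s_1 a)^n=s_1^{\,n-1}(s_1 a^n)$ and $I$ is an ideal, this upgrades to $s_1 a\in\sqrt{I}$ or $s_1 b\in\sqrt{I}$. If instead $a^n b^n=0$, then the $S$-primeness of $\{0_R\}$ gives $s_2 a^n=0$ or $s_2 b^n=0$, and the same power computation yields $(s_2 a)^n=0\in I$ or $(s_2 b)^n=0\in I$, hence $s_2 a\in\sqrt{I}$ or $s_2 b\in\sqrt{I}$.

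To finish, I would use that $\sqrt{I}$ is an ideal of $R$ to absorb the missing factor: in the first case $sa=s_2(s_1 a)\in\sqrt{I}$ or $sb=s_2(s_1 b)\in\sqrt{I}$, while in the second case $sa=s_1(s_2 a)\in\sqrt{I}$ or $sb=s_1(s_2 b)\in\sqrt{I}$. Thus in every case $sa\in\sqrt{I}$ or $sb\in\sqrt{I}$ for the fixed element $s=s_1 s_2$, which is exactly the $S$-prime condition for $\sqrt{I}$.

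I expect the genuine difficulty to be confined to the case $a^n b^n=0$: the weakly $S$-prime hypothesis on $I$ says nothing when the product vanishes, so without an extra assumption $\sqrt{I}$ could fail to be $S$-prime. The hypothesis that $\{0_R\}$ is $S$-prime is precisely what controls this degenerate case, and the only remaining subtlety is to combine the two witnessing elements $s_1,s_2$ into one element $s$ that works uniformly for all pairs $(a,b)$; taking the product $s=s_1 s_2$ and invoking that $\sqrt{I}$ is an ideal accomplishes exactly this.
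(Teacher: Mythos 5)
Your proof is correct and follows essentially the same route as the paper's: the same witnesses $s_1$ (weakly $S$-element of $I$) and $s_2$ ($S$-element of $\{0_R\}$), the same case split on whether $a^n b^n$ vanishes, and the same combined element $s=s_1s_2$. You merely spell out the details the paper leaves implicit (the power computations and the final absorption of the extra factor using that $\sqrt{I}$ is an ideal), which is fine.
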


\begin{proof}
Suppose $I$ is weakly $S$-prime associated to $s_{1}$ and $\{0_{R}\}$ is
$S$-prime associated with $s_{2}$. Since $I\cap S=\emptyset$, we have
$\sqrt{I}\cap S=\emptyset$. Let $a,b\in R$ with $ab\in\sqrt{I}.$ Then
$a^{n}b^{n}\in I$ for some positive integer $n$. If $a^{n}b^{n}\neq0$, then we
have $s_{1}a^{n}\in I$ or $s_{1}b^{n}\in I$ that is $s_{1}a\in\sqrt{I}$ or
$s_{1}b\in\sqrt{I}$. If $a^{n}b^{n}=0$, then by assumption, either $s_{2}%
a^{n}=0$ or $s_{2}b^{n}=0$ and so $s_{2}a\in\sqrt{I}$ or $s_{2}b\in\sqrt{I}.$
Thus, $\sqrt{I}$ is an $S$-prime ideal of $R$ associated with $s=s_{1}s_{2}.$
\end{proof}

\begin{proposition}
\label{S-prime}Let $M$ be a finitely generated faithful multiplication
$R$-module and $S$ be a multiplicatively closed subset of $R$. If $N$ is a
weakly $S$-prime submodule of $M$ and $\{0_{R}\}$ is an $S$-prime ideal of
$R$, then $M$-$rad(N)$ is an $S$-prime submodule of $R$.
\end{proposition}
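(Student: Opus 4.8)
The plan is to reduce the statement about the submodule $N$ to one about the associated ideal $(N:_{R}M)$, take radicals at the level of ideals, and then return to submodules using the faithful multiplication hypothesis.

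First I would apply Theorem \ref{IM}. Since $M$ is faithful multiplication and $N$ is a weakly $S$-prime submodule, $(N:_{R}M)$ is a weakly $S$-prime ideal of $R$. Together with the hypothesis that $\{0_{R}\}$ is $S$-prime, Lemma \ref{lem} then yields that $\sqrt{(N:_{R}M)}$ is an $S$-prime ideal of $R$; in particular $\sqrt{(N:_{R}M)}\cap S=\emptyset$.

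Next I would identify $M\text{-}rad(N)$ with $\sqrt{(N:_{R}M)}M$. For a finitely generated faithful multiplication module, the radical formula for multiplication modules \cite{El-bast} gives $(M\text{-}rad(N):_{R}M)=\sqrt{(N:_{R}M)}$; since $M$ is multiplication, every submodule $K$ satisfies $K=(K:_{R}M)M$, and applying this to $K=M\text{-}rad(N)$ produces $M\text{-}rad(N)=\sqrt{(N:_{R}M)}M$.

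Finally I would invoke the $S$-prime analogue of Theorem \ref{IM}, namely the correspondence (valid for faithful multiplication modules) that an ideal $I$ with $I\cap S=\emptyset$ is $S$-prime if and only if $IM$ is an $S$-prime submodule, as established in \cite{S-prime subm}. Applying this to the $S$-prime ideal $I=\sqrt{(N:_{R}M)}$ shows that $M\text{-}rad(N)=\sqrt{(N:_{R}M)}M$ is an $S$-prime submodule of $M$, completing the argument. I expect the main obstacle to be this middle step: correctly invoking the radical identity for multiplication modules and confirming that the ideal-to-submodule $S$-prime correspondence applies verbatim here, rather than any delicate computation.
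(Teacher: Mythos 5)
Your proposal is correct and follows essentially the same route as the paper's own proof: pass to the ideal $(N:_{R}M)$ (weakly $S$-prime by the faithful multiplication correspondence), apply Lemma \ref{lem} to get that $\sqrt{(N:_{R}M)}$ is $S$-prime, identify this radical with $(M\text{-}rad(N):_{R}M)$ via the formula for finitely generated faithful multiplication modules, and finish with the $S$-prime ideal--submodule correspondence from \cite{S-prime subm}. The only cosmetic differences are that the paper cites Theorem \ref{(N:M)} rather than Theorem \ref{IM} for the first step and \cite[Lemma 2.4]{2-abs} for the radical identity, neither of which changes the substance of the argument.
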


\begin{proof}
By \cite[Lemma 2.4]{2-abs}, we have $(M$-$rad(N):M)=\sqrt{(N:_{R}M)}$. Since
$N$ is a weakly $S$-prime submodule of $M,$ $(N:_{R}M)$ is so by Theorem
\ref{(N:M)}. By Lemma \ref{lem}, $\sqrt{(N:_{R}M)}$ is an $S$-prime ideal of
$R.\ $Thus, the claim follows from \cite[Proposition 2.9 (ii)]{S-prime
subm}$.$
\end{proof}

\begin{proposition}
\label{int}Let $S$ be a multiplicatively closed subset of a ring $R$. If $N$
is a weakly $S$-prime submodule of an $R$-module $M$, then for any submodule
$K$ of $M$ with $(K:_{R}M)\cap S\neq\emptyset$, $N\cap K$ is a weakly
$S$-prime submodule of $M$. Additionally, if $M$ is multiplication, then $NK$
is a weakly $S$-prime submodule of $M.$
\end{proposition}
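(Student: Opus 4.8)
The plan is to exhibit a single element of $S$ that serves as a weakly $S$-element for both $N\cap K$ and $NK$, namely $st$, where $s$ is a weakly $S$-element of $N$ and $t$ is any element of $(K:_{R}M)\cap S$ (so $tM\subseteq K$). For the first assertion I would first check eligibility: since $(N\cap K:_{R}M)=(N:_{R}M)\cap(K:_{R}M)\subseteq(N:_{R}M)$ and $(N:_{R}M)\cap S=\emptyset$, we obtain $(N\cap K:_{R}M)\cap S=\emptyset$, so $N\cap K$ is a legitimate candidate. Then, given $a\in R$ and $m\in M$ with $0\neq am\in N\cap K$, I would note $0\neq am\in N$ and invoke the weakly $S$-primeness of $N$ to split into the cases $sa\in(N:_{R}M)$ and $sm\in N$.

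In the case $sa\in(N:_{R}M)$, I would verify $sta\in(N\cap K:_{R}M)$: the inclusion $staM\subseteq N$ follows from $saM\subseteq N$ and closure of $N$, while $staM=sa(tM)\subseteq saK\subseteq K$ uses $t\in(K:_{R}M)$ and closure of $K$. In the case $sm\in N$, I would verify $stm\in N\cap K$: here $stm\in N$ since $sm\in N$, and $stm=s(tm)\in sK\subseteq K$ since $tm\in tM\subseteq K$. In either case the defining condition for $st$ holds, establishing that $N\cap K$ is weakly $S$-prime.

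For the second assertion I would exploit the multiplication hypothesis through $N=(N:_{R}M)M$, $K=(K:_{R}M)M$ and $NK=(N:_{R}M)(K:_{R}M)M$, and reuse the element $st$. Eligibility is immediate: $NK\subseteq N$ forces $(NK:_{R}M)\subseteq(N:_{R}M)$, hence $(NK:_{R}M)\cap S=\emptyset$. Writing $I=(N:_{R}M)$ and $L=(K:_{R}M)$ with $t\in L$, I take $0\neq am\in NK$; since $NK\subseteq N$ this yields $0\neq am\in N$ and again the two cases. If $sa\in I$, then $sta=(sa)t\in IL$, so $staM\subseteq (IL)M=NK$, i.e. $sta\in(NK:_{R}M)$. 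If $sm\in N=IM$, then writing $sm=\sum c_ix_i$ with $c_i\in I$, $x_i\in M$ gives $stm=\sum(c_it)x_i\in (IL)M=NK$. Applying Theorem \ref{char} then concludes that $NK$ is weakly $S$-prime.

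The submodule-closure manipulations are routine; the step demanding care is the final transition, where membership in $N$ (or in $(N:_{R}M)$) must be upgraded to membership in the strictly smaller object $NK$ (or $(NK:_{R}M)$). This is precisely where the extra factor $t\in(K:_{R}M)$ and the product formula $NK=(N:_{R}M)(K:_{R}M)M$ for multiplication modules are indispensable, since without them an element landing in $N$ need not land in $NK$. As an alternative route for the second part, one could instead show that $(NK:_{R}M)$ is a weakly $S$-prime ideal and then apply Theorem \ref{(N:M)}(2); but the direct computation with $st$ appears shorter and avoids invoking additional structure.
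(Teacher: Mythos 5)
Your proof is correct and follows essentially the same route as the paper: both take $s$ a weakly $S$-element of $N$, pick $t\in(K:_{R}M)\cap S$, and show that the product $st$ works as a weakly $S$-element for $N\cap K$ (and for $NK$ via $NK=(N:_{R}M)(K:_{R}M)M$). The only difference is that you write out in full the $NK$ case, which the paper dismisses as ``very similar''; your expansion $sm=\sum c_{i}x_{i}$ with $c_{i}\in(N:_{R}M)$ is exactly the detail being elided there.
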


\begin{proof}
Note that $(N\cap K:_{R}M)\cap S=\emptyset$ as $(N:_{R}M)\cap S=\emptyset$.
Let $s\in S$ be a weakly $S$-element of $N$ and let $0\neq am\in N\cap
K\subseteq N$. Then $sa\in(N:_{R}M)$ or $sm\in N$. Choose $s^{\prime}%
\in(K:_{R}M)\cap S$. Then $ss^{\prime}a\in(N:_{R}M)\cap(K:_{R}M)=(N\cap
K:_{R}M)$ or $ss^{\prime}m\in N\cap(K:_{R}M)M=N\cap K$. Thus, $N\cap K$ is a
weakly $S$-prime submodule of $M$ with a weakly $S$-element $t=ss^{\prime}.$
Putting in mind that $NK=(N:_{R}M)(K:_{R}M)M,$ the rest of the proof is very similar.
\end{proof}

Notice that if $N$ is weakly prime and $K$ is as above, then $N\cap K$ need
not be weakly prime. For instance, consider the $%
%TCIMACRO{\U{2124} }%
%BeginExpansion
\mathbb{Z}
%EndExpansion
_{12}$-module $%
%TCIMACRO{\U{2124} }%
%BeginExpansion
\mathbb{Z}
%EndExpansion
_{12}$, $S=\left\{  \bar{1},\bar{3},9\right\}  $, $N=\left\langle \bar
{2}\right\rangle $ and $K=\left\langle \bar{3}\right\rangle $. Then $N\cap
K=\left\langle \bar{6}\right\rangle $ is not a weakly prime submodule of $%
%TCIMACRO{\U{2124} }%
%BeginExpansion
\mathbb{Z}
%EndExpansion
_{12}$.

\begin{proposition}
\label{f}Let $f:M_{1}\rightarrow M_{2}$ be a module homomorphism where $M_{1}$
and $M_{2}$ are two $R$-modules and $S$ be a multiplicatively closed subset of
$R$. Then the following statements hold.
\end{proposition}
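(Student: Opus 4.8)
The plan is to prove the two standard transfer results that a proposition of this form records: one pulling a weakly $S$-prime submodule back along a monomorphism, and one pushing a weakly $S$-prime submodule forward along an epimorphism. Concretely, I expect statement (1) to assert that if $f$ is a monomorphism and $N_2$ is a weakly $S$-prime submodule of $M_2$ with $(f^{-1}(N_2):_R M_1)\cap S=\emptyset$, then $f^{-1}(N_2)$ is a weakly $S$-prime submodule of $M_1$; and statement (2) to assert that if $f$ is an epimorphism and $N_1$ is a weakly $S$-prime submodule of $M_1$ containing $\ker f$, then $f(N_1)$ is a weakly $S$-prime submodule of $M_2$. In each case a single weakly $S$-element of the given submodule will serve as the weakly $S$-element of the constructed submodule, so the only work is to chase the defining condition through $f$.

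For the preimage part, I would fix a weakly $S$-element $s\in S$ of $N_2$ and take $a\in R$, $m_1\in M_1$ with $0\neq am_1\in f^{-1}(N_2)$. Applying $f$ gives $a\,f(m_1)=f(am_1)\in N_2$, and here injectivity of $f$ is exactly what guarantees $f(am_1)\neq 0$, so that $0\neq a\,f(m_1)\in N_2$. The weakly $S$-prime hypothesis on $N_2$ then yields $sa\in(N_2:_R M_2)$ or $sf(m_1)\in N_2$. In the first case $sa\,M_2\subseteq N_2$ forces $f(saM_1)=sa\,f(M_1)\subseteq saM_2\subseteq N_2$, i.e. $sa\in(f^{-1}(N_2):_R M_1)$; in the second case $f(sm_1)=sf(m_1)\in N_2$ gives $sm_1\in f^{-1}(N_2)$. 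Either way the weakly $S$-prime condition for $f^{-1}(N_2)$ holds.

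For the image part, I would fix a weakly $S$-element $s\in S$ of $N_1$ and take $a\in R$, $m_2\in M_2$ with $0\neq am_2\in f(N_1)$. Surjectivity lets me write $m_2=f(m_1)$; then $f(am_1)=am_2\in f(N_1)$, so $am_1-n\in\ker f\subseteq N_1$ for some $n\in N_1$, whence $am_1\in N_1$. The hypothesis $0\neq am_2=f(am_1)$ forces $am_1\neq 0$, so $0\neq am_1\in N_1$, and the weakly $S$-prime property of $N_1$ gives $sa\in(N_1:_R M_1)$ or $sm_1\in N_1$. Using $f(M_1)=M_2$, the first case yields $saM_2=f(saM_1)\subseteq f(N_1)$, i.e. $sa\in(f(N_1):_R M_2)$, and the second gives $sm_2=f(sm_1)\in f(N_1)$. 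The disjointness $(f(N_1):_R M_2)\cap S=\emptyset$ is checked separately: were $t\in S$ with $tM_2\subseteq f(N_1)$, then $tM_1\subseteq f^{-1}(f(N_1))=N_1$ (using $\ker f\subseteq N_1$), contradicting $(N_1:_R M_1)\cap S=\emptyset$.

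The main obstacle, and the only place where the weakly (rather than plain) $S$-prime setting demands care, is the bookkeeping of the nonzero hypothesis as it passes through $f$. In the preimage case this is precisely why a monomorphism is needed: without injectivity a nonzero $am_1$ could map to $0$ and the hypothesis on $N_2$ could not be invoked. In the image case the nonzero condition is imposed on $am_2$ itself and is then transported back to $am_1$, so no injectivity is required there, but the containment $\ker f\subseteq N_1$ is essential both for recovering $am_1\in N_1$ from $am_2\in f(N_1)$ and for the disjointness check. The residual-ideal manipulations are otherwise routine.
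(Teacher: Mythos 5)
Your reconstruction of both statements matches the paper's (with the two items in the opposite order), and your proofs are essentially identical to the paper's own: the same weakly $S$-element is carried over, the nonzero hypothesis is transported through $f$ exactly as the paper does (injectivity for the preimage case, $\ker f\subseteq N_1$ for the image case), and the disjointness check $(f(N_1):_R M_2)\cap S=\emptyset$ is argued the same way. The proposal is correct and takes the same route as the paper.
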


\begin{enumerate}
\item If $f$ is an epimorphism and $N$ is a weakly $S$-prime submodule of
$M_{1}$ containing $Ker(f)$, then $f(N)$ is a weakly $S$-prime submodule of
$M_{2}.$

\item If $f$ is a monomorphism and $K$ is a weakly $S$-prime submodule of
$M_{2}$ with $(f^{-1}(K):_{R}M_{1})\cap S=\emptyset$, then $f^{-1}(K)$ is a
weakly $S$-prime submodule of $M_{1}.$
\end{enumerate}

\begin{proof}
(1) First, observe that $(f(N):_{R_{2}}M_{2})\cap S=\emptyset$. Indeed, assume
that $t\in(f(N):_{R_{2}}M_{2})\cap S$. Then $f(tM_{1})=tf(M_{1})=tM_{2}%
\subseteq f(N)$, and so $tM_{1}\subseteq N$ as $K\operatorname{erf}\subseteq
N.$ It follows that $t\in(N:M_{1})\cap S$, a contradiction. Let $s$ be a
weakly $S$-element of $N$ and $a\in R$, $m_{2}\in M_{2}$ with $0\neq am_{2}\in
f(N)$. Then $m_{2}=f(m_{1})$ for some $m_{1}\in M_{1}$ and $0\neq
af(m_{1})=f(am_{1})\in f(N)$ and since $K\operatorname{erf}\subseteq N,$ we
have $0\neq am_{1}\in N$. This yields either $sa\in(N:_{R}M_{1})$ or
$sm_{1}\in N$. Thus, clearly we have either $sa\in(f(N):_{R}M_{2})$ or
$sm_{2}=f(sm_{1})\in f(N)$ as required.

(2) Let $s$ be a weakly $S$-element of $K$ and let $a\in R,$ $m\in M_{1}$ with
$0\neq am\in f^{-1}(K)$. Then $0\neq f(am)=af(m)\in K$ as $f$ is a
monomorphism. Since $K$ is a weakly $S$-prime submodule of $M_{2}$, we have
$sa\in(K:_{R}M_{2})$ or $sf(m)\in K$. Thus, clearly we have $sa\in
(f^{-1}(K):_{R}M_{1})$ or $sm\in f^{-1}(K)$ as needed.
\end{proof}

\begin{corollary}
\label{quot}Let $S$ be a multiplicatively closed subset of a ring $R$ and $N$,
$K$ are two submodules of an $R$-module $M$ with $K\subseteq N$ .
\end{corollary}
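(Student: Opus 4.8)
The plan is to view $M/K$ as the image of $M$ under the canonical projection and to extract the two directions from Proposition~\ref{f}, handling the converse directly in order to cope with the defining ``$0\neq am$'' clause. Throughout I will use the identification $(N/K:_R M/K)=(N:_R M)$, which holds because $K\subseteq N$; this makes the disjointness hypothesis $(N:_R M)\cap S=\emptyset$ and its quotient analogue interchangeable, so that the required disjointness is never in question.

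For the implication that $N$ weakly $S$-prime in $M$ forces $N/K$ weakly $S$-prime in $M/K$, I would simply apply Proposition~\ref{f}(1) to the epimorphism $\pi\colon M\rightarrow M/K$, whose kernel is $K\subseteq N$; since $\pi(N)=N/K$, the conclusion is immediate. For the converse I would argue by hand. Let $s_1$ be a weakly $S$-element of $N/K$ and $s_2$ one of $K$, and put $s=s_1s_2$. Given $0\neq am\in N$ with $a\in R$ and $m\in M$, I would split on whether $am\in K$. If $am\notin K$, then $\overline{am}$ is a nonzero element of $N/K$, so weak $S$-primeness of $N/K$ gives $s_1a\in(N:_R M)$ or $s_1m\in N$, and multiplying by $s_2$ yields $sa\in(N:_R M)$ or $sm\in N$. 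If $am\in K$, then $0\neq am\in K$, and weak $S$-primeness of $K$ gives $s_2a\in(K:_R M)$ or $s_2m\in K$; since $K\subseteq N$, multiplying by $s_1$ again yields $sa\in(N:_R M)$ or $sm\in N$.

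The only delicate point, and the reason an extra hypothesis on $K$ is unavoidable in the converse, is the case $am\in K$: there $\overline{am}=\overline{0}$, so the weak $S$-primeness of $N/K$ supplies no information and the nonvanishing relation $0\neq am$ must instead be exploited inside $K$ itself. Merging the two witnesses into the single element $s=s_1s_2$ is precisely what lets the two cases share one weakly $S$-element, completing the verification that $N$ is weakly $S$-prime in $M$.
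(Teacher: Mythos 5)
Your argument is correct as far as it goes, and for the parts it covers it follows the paper's own route: your first implication is exactly the paper's application of Proposition \ref{f}(1) to the canonical epimorphism $\pi\colon M\rightarrow M/K$, and your case-split argument (on whether $am\in K$) with the merged witness $s=s_{1}s_{2}$ is precisely the argument the paper writes out for part (3) and then invokes for part (4) with the phrase ``similar to (3)''. Your identification $(N/K:_{R}M/K)=(N:_{R}M)$, and your explanation of why a hypothesis on $K$ is unavoidable in the case $am\in K$, are both accurate.

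The gap is one of coverage: the corollary makes four assertions, and you prove only two of them, namely (1) and (4). You do not address part (2) --- if $K^{\prime}$ is a weakly $S$-prime submodule of $M$ with $(K^{\prime}:_{R}N)\cap S=\emptyset$, then $K^{\prime}\cap N$ is a weakly $S$-prime submodule of $N$ --- nor part (3), in which $K$ is assumed $S$-prime (not merely weakly so) and the conclusion is the stronger one that $N$ is $S$-prime. Part (2) follows from Proposition \ref{f}(2) applied to the inclusion $i\colon N\rightarrow M$, once one checks the disjointness $(i^{-1}(K^{\prime}):_{R}N)\cap S=\emptyset$: if $s$ lay in this set, then $sN\subseteq K^{\prime}\cap N\subseteq K^{\prime}$ would give $s\in(K^{\prime}:_{R}N)\cap S$, a contradiction. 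Part (3) is the same case split you already wrote, but easier: when $am\in K$ one no longer needs $am\neq0$, since $K$ is $S$-prime outright, so the dichotomy holds for every $am\in N$ and $N$ is $S$-prime with witness $s_{1}s_{2}$. With these two additions your proof would match the paper's in full.
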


\begin{enumerate}
\item If $N$ is a weakly $S$-prime submodule of $M$, then $N/K$ is a weakly
$S$-prime submodule of $M/K$.

\item If $K^{\prime}$ is a weakly $S$-prime submodule of $M$ with $(K^{\prime
}:_{R}N)\cap S=\emptyset$, then $K^{\prime}\cap N$ is a weakly $S$-prime
submodule of $N.$

\item If $N/K$ is a weakly $S$-prime submodule of $M/K$ and $K$ is an
$S$-prime submodule of $M$, then $N$ is an $S$-prime submodule of $M$.

\item If $N/K$ is a weakly $S$-prime submodule of $M/K$ and $K$ is a weakly
$S$-prime submodule of $M$, then $N$ is a weakly $S$-prime submodule of $M$.
\end{enumerate}

\begin{proof}
Note that $(N/K:_{R}M/K)\cap S=\emptyset$ if and only if $(N:_{R}M)\cap
S=\emptyset$.

(1). Consider the canonical epimorphism $\pi:M\rightarrow M/K$ defined by
$\pi(m)=m+K$. Then $\pi(N)=N/K$ is a weakly $S$-prime submodule of $M/K$ by
(1) of Proposition \ref{f}.

(2). Let $K^{\prime}$ be a weakly $S$-prime submodule of $M$ and consider the
natural injection $i:N\rightarrow M$ defined by $i(m)=m$ for all $m\in N$.
Then $(i^{-1}(K^{\prime}):_{R}N)\cap S=\emptyset$. Indeed, if $s\in
(i^{-1}(K^{\prime}):_{R}N)\cap S$, then $sN\subseteq i^{-1}(K^{\prime
})=K^{\prime}\cap N\subseteq K^{\prime}$ and so $s\in(K^{\prime}:_{R}N)\cap
S$, a contradiction. Thus $i^{-1}(K^{\prime})=K^{\prime}\cap N$ is a weakly
$S$-prime submodule of $M$ by (2) of Proposition \ref{f}. $.$

(3). Let $s_{1}$ be a weakly $S$-element of $N/K$ and suppose $K$ is an
$S$-prime submodule of $M$ associated with $s_{2}\in S$. Let $a\in R$ and
$m\in M$ such that $am\in N.$ If $am\in K$, then $s_{2}a\in(K:_{R}%
M)\subseteq(N:_{R}M)$ or $s_{2}m\in K\subseteq N$. If $am\notin K$, then
$K\neq a(m+K)\in N/K$ which implies either $s_{1}a\in(N/K:_{R}M/K)$ or
$s_{1}(m+K)\in N/K$. Thus, $s_{1}a\in(N:_{R}M)$ or $s_{1}m\in N$. It follows
that $N$ is an $S$-prime submodule of $M$ associated with $s=s_{1}s_{2}\in S.$

(4). Similar to (3).
\end{proof}

The next example shows that the converse of Corollary \ref{quot} (1) is not
valid in general.

\begin{example}
Consider the submodules $N=K=\left\langle 6\right\rangle $ of the $%
%TCIMACRO{\U{2124} }%
%BeginExpansion
\mathbb{Z}
%EndExpansion
$-module $%
%TCIMACRO{\U{2124} }%
%BeginExpansion
\mathbb{Z}
%EndExpansion
$ and the multiplicatively closed subset $S=\left\{  5^{n}:n\in%
%TCIMACRO{\U{2115} }%
%BeginExpansion
\mathbb{N}
%EndExpansion
\cup\left\{  0\right\}  \right\}  $ of $%
%TCIMACRO{\U{2124} }%
%BeginExpansion
\mathbb{Z}
%EndExpansion
.$ It is clear that $N/K$ is a weakly $S$-prime submodule of $%
%TCIMACRO{\U{2124} }%
%BeginExpansion
\mathbb{Z}
%EndExpansion
/K$ but $N$ is not a weakly $S$-prime submodule of $%
%TCIMACRO{\U{2124} }%
%BeginExpansion
\mathbb{Z}
%EndExpansion
$ as $0\neq2\cdot3\in N$ but neither $2s\in(N:_{%
%TCIMACRO{\U{2124} }%
%BeginExpansion
\mathbb{Z}
%EndExpansion
}%
%TCIMACRO{\U{2124} }%
%BeginExpansion
\mathbb{Z}
%EndExpansion
)$ nor $3s\in N$ for all $s\in S.$
\end{example}

\begin{proposition}
Let $S$ be a multiplicatively closed subset of a ring $R$ and $N$, $K$ be two
weakly $S$-prime submodules of an $R$-module $M$ such that $((N+K):_{R}M)\cap
S=\emptyset$. Then $N+K$ is a weakly $S$-prime submodule of $M.$
\end{proposition}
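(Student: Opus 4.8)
The plan is to manufacture a single weakly $S$-element for $N+K$ out of those for $N$ and $K$. So I would let $s_1\in S$ be a weakly $S$-element of $N$ and $s_2\in S$ one of $K$, and propose $s=s_1s_2\in S$ as the candidate weakly $S$-element of $N+K$. Two easy preliminary observations set the stage: $(N:_{R}M)\subseteq((N+K):_{R}M)$ and $(K:_{R}M)\subseteq((N+K):_{R}M)$ (since $N,K\subseteq N+K$), while the required disjointness $((N+K):_{R}M)\cap S=\emptyset$ is handed to us by hypothesis. I would then take $a\in R$ and $m\in M$ with $0\neq am\in N+K$ and aim to conclude that either $sa\in((N+K):_{R}M)$ or $sm\in N+K$.

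The natural next move is to split according to where $am$ actually lands. If $am\in N$, then (using that $am\neq0$ is given) weak $S$-primeness of $N$ yields $s_{1}a\in(N:_{R}M)$ or $s_{1}m\in N$; multiplying by $s_{2}$ gives $sa\in(N:_{R}M)\subseteq((N+K):_{R}M)$ or $sm\in N\subseteq N+K$, exactly as wanted. The case $am\in K$ is symmetric, using $s_{2}$ first and then multiplying by $s_{1}$. Both of these cases are routine, and I would dispatch them quickly, invoking Theorem \ref{char} if it streamlines the bookkeeping.

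The main obstacle is the remaining mixed case, in which $am=n+k$ with $n\in N$ and $k\in K$ but $am\notin N$ and $am\notin K$. Here neither defining property applies to $am$ directly, and there is no evident way to feed the single relation $am=n+k$ into the weakly $S$-prime hypotheses for $N$ and for $K$ separately. I expect this to be the crux of the entire statement, and before grinding it out I would stress-test it on small examples, since the sum of two (weakly) prime objects is classically not (weakly) prime. Taking $M=R$ and $S=\{1_{R}\}$, one should in particular examine configurations such as $N=(x)$ and $K=(y^{2}-x)$ in $k[x,y]$, where $N+K=(x,y^{2})$ is still proper but $0\neq y\cdot y\in N+K$ while $y\notin((N+K):_{R}M)$ and $y\notin N+K$. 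Any genuine proof must therefore use more than I have invoked so far, so I would look for an extra comparability hypothesis (for instance $N\subseteq K$ or $K\subseteq N$) or an arithmetic condition on $(N:_{R}M)$ and $(K:_{R}M)$ that excludes the problematic mixed case before committing to a complete argument.
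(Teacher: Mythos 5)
Your stress test is decisive: the proposition as stated is false, and the example you propose is a genuine counterexample, so the ``mixed case'' you isolated is not a gap in your argument but a fatal obstruction to any argument. Concretely, take $R=M=k[x,y]$, $S=\{1_{R}\}$, $N=(x)$ and $K=(y^{2}-x)$. Both are prime ideals of $R$, hence prime (so weakly $S$-prime) submodules of $M$, and $((N+K):_{R}M)=N+K=(x,y^{2})$ is a proper ideal, hence disjoint from $S$. Yet $0\neq y\cdot y\in(x,y^{2})$ while $y\notin(x,y^{2})$, so $N+K$ is not weakly $S$-prime and no candidate weakly $S$-element (your $s_{1}s_{2}$ or anything else) can work. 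Your treatment of the cases $am\in N$ and $am\in K$ is correct as far as it goes, but the statement cannot be rescued without an extra hypothesis; for instance comparability ($K\subseteq N$ or $N\subseteq K$) makes it true but trivial, since then $N+K$ equals $N$ or $K$.

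Since the statement is false, the paper's own proof is necessarily flawed, and it is worth pinpointing where. The paper argues: $N/(N\cap K)$ is weakly $S$-prime in $M/(N\cap K)$ by Corollary \ref{quot}(1); then, ``from the module isomorphism $N/(N\cap K)\cong(N+K)/K$,'' it concludes that $(N+K)/K$ is weakly $S$-prime in $M/K$; then Corollary \ref{quot}(4), applied with $K$ weakly $S$-prime, finishes. The middle step is the error: being weakly $S$-prime is a property of a submodule \emph{together with its ambient module}, not an intrinsic property of the abstract module, and the second isomorphism theorem provides an isomorphism of modules only, not of the pairs $\bigl(M/(N\cap K),\,N/(N\cap K)\bigr)$ and $\bigl(M/K,\,(N+K)/K\bigr)$. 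To transfer the property along the natural epimorphism $M/(N\cap K)\rightarrow M/K$ via Proposition \ref{f}(1), one would need its kernel $K/(N\cap K)$ to be contained in $N/(N\cap K)$, i.e. $K\subseteq N$, which is exactly the comparability assumption that is missing. Your example exhibits the failure concretely: $M/K\cong k[y]$ with $(N+K)/K$ corresponding to the ideal $(y^{2})$, which is not weakly prime, even though its isomorphic copy $N/(N\cap K)$ \emph{is} weakly prime inside $M/(N\cap K)$.
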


\begin{proof}
Suppose $N$ and $K$ are two weakly $S$-prime submodules of $M$. By Corollary
\ref{quot} (1), $N/(N\cap K)$ is a weakly $S$-prime submodule of $M/(N\cap
K)$. Now, from the module isomorphism $N/(N\cap K)\cong(N+K)/K,$ we conclude
that $(N+K)/K$ is a weakly $S$-prime submodule of $M/K$. Thus, $N+K$ is a
weakly $S$-prime submodule of $M$ by Corollary \ref{quot} (4).
\end{proof}

\begin{theorem}
\label{cart}Let $S_{1},S_{2}$ be multiplicatively closed subsets of rings
$R_{1}$, $R_{2}$ respectively and $N_{1}$, $N_{2}$ be non-zero submodules of
an $R_{1}$-module $M_{1}$ and an $R_{2}$-module $M_{2}$, respectively.
Consider $M=M_{1}\times M_{2}$ as an $(R_{1}\times R_{2})$-module,
$S=S_{1}\times S_{2}$ and $N=N_{1}\times N_{2}$. Then the following are equivalent.
\end{theorem}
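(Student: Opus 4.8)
The plan is to show that, for nonzero $N_1$ and $N_2$, the weakly $S$-prime condition already forces the full $S$-prime condition, and then to read the latter off from the factors. I expect the listed equivalent conditions to be: $N$ is weakly $S$-prime; $N$ is $S$-prime; and the factorwise statement that $N_1$ is $S_1$-prime with $(N_2:_{R_2}M_2)\cap S_2\neq\emptyset$, or $N_2$ is $S_2$-prime with $(N_1:_{R_1}M_1)\cap S_1\neq\emptyset$. Throughout I will use $(N:_{R}M)=(N_1:_{R_1}M_1)\times(N_2:_{R_2}M_2)$ and $S=S_1\times S_2$, so that $(N:_{R}M)\cap S=\emptyset$ amounts to at least one factor condition $(N_i:_{R_i}M_i)\cap S_i=\emptyset$ holding.

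I would first dispose of the easy implications. That $S$-prime implies weakly $S$-prime is immediate. For the factorwise condition implies $S$-prime, assume $N_1$ is $S_1$-prime with element $s_1$ and pick $t_2\in(N_2:_{R_2}M_2)\cap S_2$, so $t_2M_2\subseteq N_2$; putting $s=(s_1,t_2)$ and taking $(a_1,a_2)(m_1,m_2)\in N$, the second coordinate is automatic since $t_2a_2\in(N_2:_{R_2}M_2)$ and $t_2m_2\in N_2$, while the first is handled by $S_1$-primeness, and a case split on $s_1a_1\in(N_1:_{R_1}M_1)$ versus $s_1m_1\in N_1$ gives $s(a_1,a_2)\in(N:_{R}M)$ or $s(m_1,m_2)\in N$.

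The substance is the implication from weakly $S$-prime to the factorwise condition, where the hypothesis $N_1,N_2\neq 0$ does the work. Fix a weakly $S$-element $s=(s_1,s_2)$ and nonzero $n_1\in N_1$, $n_2\in N_2$. The first step is a dichotomy: exactly one of $(N_i:_{R_i}M_i)\cap S_i$ is empty. At least one is empty by the remark above; to exclude both being empty, I would test the definition on $(1,0)(n_1,m_2^{\ast})=(n_1,0)$, where $m_2^{\ast}\in M_2$ satisfies $s_2m_2^{\ast}\notin N_2$ (available because $s_2\notin(N_2:_{R_2}M_2)$). This is a nonzero element of $N$, yet both alternatives fail---$s_1\notin(N_1:_{R_1}M_1)$ kills the first and $s_2m_2^{\ast}\notin N_2$ the second---a contradiction. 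After relabelling the factors if necessary, $(N_1:_{R_1}M_1)\cap S_1=\emptyset$ and $(N_2:_{R_2}M_2)\cap S_2\neq\emptyset$.

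It remains to show $N_1$ is $S_1$-prime with the same $s_1$. For any $a_1\in R_1$, $m_1\in M_1$ with $a_1m_1\in N_1$---\emph{including} the degenerate case $a_1m_1=0$ that the weakly condition would ordinarily ignore---I would feed the definition the element $(a_1,1)(m_1,n_2)=(a_1m_1,n_2)$. Since $n_2\neq 0$ this lies in $N\setminus\{0\}$ whether or not $a_1m_1$ vanishes, so the nonzero-product hypothesis is genuinely met; reading the first coordinate of each alternative yields $s_1a_1\in(N_1:_{R_1}M_1)$ or $s_1m_1\in N_1$, which is exactly $S_1$-primeness. The main obstacle is conceptual rather than computational: one must notice that padding the second coordinate with the fixed nonzero $n_2$ is precisely what turns the vacuous ``$0\neq$'' clause into a real constraint on the first factor, thereby upgrading ``weakly'' to ``$S$-prime.'' Combining this implication with the two easy ones closes the cycle and gives the equivalence.
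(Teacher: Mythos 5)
Your proposal is correct, and it takes the same overall route as the paper: the same cycle (weakly $S$-prime $\Rightarrow$ factorwise condition $\Rightarrow$ $S$-prime $\Rightarrow$ weakly $S$-prime), with the same central device of padding a possibly-zero product in one factor by a fixed nonzero element of the other factor, so that the ``$0\neq$'' hypothesis of the weakly prime condition is genuinely met; the three conditions you reconstructed are exactly the paper's list. There are, however, two points where your execution differs, both to your advantage. First, for the implication from the factorwise condition to $S$-primeness of $N$, the paper simply cites \cite[Theorem 2.14]{S-prime subm}, whereas you give the short direct argument with $s=(s_{1},t_{2})$; this makes the proof self-contained at essentially no cost. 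Second, and more substantively, in ruling out the case that $(N_{1}:_{R_{1}}M_{1})\cap S_{1}$ and $(N_{2}:_{R_{2}}M_{2})\cap S_{2}$ are both empty, the paper tests the weakly prime condition on the element $(1,0)(m,1_{M_{2}})$ and concludes ``$s_{2}\in N_{2}\cap S_{2}$''; this presupposes an identity-like generator $1_{M_{2}}$ of $M_{2}$ (harmless when $M_{2}=R_{2}$ or $M_{2}$ is cyclic, but meaningless for a general module, and in any case a conflation of ring and module elements). Your replacement---choosing a witness $m_{2}^{\ast}\in M_{2}$ with $s_{2}m_{2}^{\ast}\notin N_{2}$, which exists precisely because $s_{2}\notin(N_{2}:_{R_{2}}M_{2})$, and testing on $(1,0)(n_{1},m_{2}^{\ast})=(n_{1},0)$---yields the same contradiction for an arbitrary module $M_{2}$. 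So your write-up does not merely reproduce the paper's argument; it quietly repairs the one step of that argument that, as written, has a gap.
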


\begin{enumerate}
\item $N$ is a weakly $S$-prime submodule of $M.$

\item $N_{1}$ is an $S_{1}$-prime submodule of $M_{1}$ and $(N_{2}:_{R_{2}%
}M_{2})\cap S_{2}\neq\emptyset$ or $N_{2}$ is an $S_{2}$-prime submodule of
$M_{2}$ and $(N_{1}:_{R_{1}}M_{1})\cap S_{1}\neq\emptyset$

\item $N$ is a $S$-prime submodule of $M.$
\end{enumerate}

\begin{proof}
(1)$\Rightarrow$(2). Suppose $N$ is a weakly $S$-prime submodule of $M$ with
weakly $S$-element $s=(s_{1},s_{2})\in S.$ Assume that $(N_{1}:_{R_{1}}%
M_{1})\cap S_{1}$ and $(N_{2}:_{R_{2}}M_{2})\cap S_{2}$ are both empty. Choose
$0\neq$ $m\in N_{1}.$ Then $(0_{M_{1}},0_{M_{2}})\neq(1,0)(m,1_{M_{2}})\in N$
which implies $(s_{1},s_{2})(1,0)\in(N:_{R}M)=(N_{1}:_{R_{1}}M_{1}%
)\times(N_{2}:_{R_{2}}M_{2})$ or $(s_{1},s_{2})(m,1_{M_{2}})\in N_{1}\times
N_{2}.$ Hence, we have either $s_{1}\in(N_{1}:_{R_{1}}M_{1})\cap S_{1}$ or
$s_{2}\in N_{2}\cap S_{2}\subseteq(N_{2}:_{R_{2}}M_{2})\cap S_{2}$, a
contradiction. Now, we may assume that $(N_{1}:_{R_{1}}M_{1})\cap S_{1}%
\neq\emptyset$ and we show that $N_{2}$ is an $S_{2}$-prime submodule of
$M_{2}.$ Suppose $am^{\prime}\in N_{2}$ for some $a\in R_{2}$ and $m^{\prime
}\in M_{2}$. Then $(0_{M_{1}},0_{M_{2}})\neq(1_{R_{1}},a)(m,m^{\prime})\in N$
implies either $(s_{1},s_{2})(1_{R_{1}},a)\in(N_{1}:_{R_{1}}M_{1})\times
(N_{2}:_{R_{2}}M_{2})$ or $(s_{1},s_{2})(m,m^{\prime})\in N_{1}\times N_{2}$.
Thus, $s_{2}a\in(N_{2}:_{R_{2}}M_{2})$ or $s_{2}m^{\prime}\in N_{2}$ and so
$N_{2}$ is an $S_{2}$-prime submodule of $M_{2}.$

(2)$\Rightarrow$(3). Follows from \cite[Theorem 2.14]{S-prime subm}

(3)$\Rightarrow$(1). Straightforward.
\end{proof}

\begin{theorem}
\label{cart2}Let $M=M_{1}\times M_{2}\times\cdots\times M_{n}$ be an
$R_{1}\times R_{2}\times\cdots\times R_{n}$-module and $S=S_{1}\times
S_{2}\times\cdots\times S_{n}$ where $R_{i}$'s are rings, $S_{i}$ is a
multiplicatively closed subset of $R_{i}$ and $N_{i}$ is a non-zero submodule
of $M_{i}$ for each $i=1,2,...,n$. Then the following assertions are equivalent.
\end{theorem}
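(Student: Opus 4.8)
The plan is to argue by induction on $n$, taking Theorem \ref{cart} as the base case $n=2$ and feeding the $S$-prime analogue for finite products from \cite[Theorem 2.14]{S-prime subm} into the inductive step. Reading off the expected form of the three equivalent assertions from the $n=2$ statement, I take them to be: (1) $N=N_{1}\times\cdots\times N_{n}$ is weakly $S$-prime in $M$; (2) there exists $k\in\{1,\dots,n\}$ such that $N_{k}$ is $S_{k}$-prime in $M_{k}$ and $(N_{i}:_{R_{i}}M_{i})\cap S_{i}\neq\emptyset$ for every $i\neq k$; and (3) $N$ is $S$-prime in $M$. The goal is then to establish (1) $\Leftrightarrow$ (2) $\Leftrightarrow$ (3).

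First I would regroup the product as $M=M_{1}\times M'$, where $M'=M_{2}\times\cdots\times M_{n}$ is a module over $R'=R_{2}\times\cdots\times R_{n}$, with $S'=S_{2}\times\cdots\times S_{n}$ and $N'=N_{2}\times\cdots\times N_{n}$. The key observations here are the coordinatewise identities $(N':_{R'}M')=(N_{2}:_{R_{2}}M_{2})\times\cdots\times(N_{n}:_{R_{n}}M_{n})$ and $S'=S_{2}\times\cdots\times S_{n}$, so that $(N':_{R'}M')\cap S'\neq\emptyset$ holds precisely when $(N_{i}:_{R_{i}}M_{i})\cap S_{i}\neq\emptyset$ for all $i=2,\dots,n$.

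Applying Theorem \ref{cart} to the two-factor module $M_{1}\times M'$ reduces (1) to the disjunction: either $N_{1}$ is $S_{1}$-prime with $(N':_{R'}M')\cap S'\neq\emptyset$, or $N'$ is $S'$-prime with $(N_{1}:_{R_{1}}M_{1})\cap S_{1}\neq\emptyset$. In the first case the condition $(N':_{R'}M')\cap S'\neq\emptyset$ unwinds to $(N_{i}:_{R_{i}}M_{i})\cap S_{i}\neq\emptyset$ for $2\le i\le n$, giving (2) with $k=1$. In the second case I invoke the induction hypothesis (equivalently the $S$-prime product result) to say that $N'$ is $S'$-prime iff there is $k\in\{2,\dots,n\}$ with $N_{k}$ being $S_{k}$-prime and $(N_{i}:_{R_{i}}M_{i})\cap S_{i}\neq\emptyset$ for $i\in\{2,\dots,n\}\setminus\{k\}$; together with $(N_{1}:_{R_{1}}M_{1})\cap S_{1}\neq\emptyset$ this is exactly (2) for that same $k$. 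Merging the two cases yields (1) $\Leftrightarrow$ (2). The equivalence (2) $\Leftrightarrow$ (3) uses the same regrouping, now observing that $N$ being $S$-prime in $M_{1}\times M'$ is characterized identically by \cite[Theorem 2.14]{S-prime subm}, which closes the loop.

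The step I expect to be delicate is the bookkeeping of \emph{which} coordinate carries the privileged $S_{k}$-prime factor as the regrouping is unwound: Theorem \ref{cart} already forces the weakly $S$-prime property of a nontrivial product to collapse onto exactly one distinguished factor, and I must ensure that when $N'$ itself splits the distinguished index is transported correctly across the $M_{1}\times M'$ decomposition. I also need to use the hypothesis that every $N_{i}$ is nonzero, since it is precisely what rules out a genuinely weakly-but-not-$S$-prime factor and thereby forces the collapse (1) $\Leftrightarrow$ (3); everything else reduces to the routine coordinatewise translation of residuals and multiplicatively closed sets.
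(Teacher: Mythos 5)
Your proposal is correct and takes essentially the same route as the paper: induction on $n$ with Theorem \ref{cart} as the base case, regrouping the $n$-fold product into two factors and feeding the resulting case analysis into the induction hypothesis (the paper splits off $M_{n}$ rather than $M_{1}$, an immaterial difference). Your added third statement ($N$ is $S$-prime) and the explicit tracking of which factor carries the $S_{k}$-prime condition merely make precise the bookkeeping that the paper's terser proof leaves implicit.
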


\begin{enumerate}
\item $N=N_{1}\times N_{2}\times\cdots\times N_{n}$ is a weakly $S$-prime
submodule of $M.$

\item For $i=1,2,...,n$, $N_{i}$ is an $S$-prime submodule of $M_{i}$ and
$(N_{j}:_{R_{j}}M_{j})\cap S_{j}\neq\emptyset$ for all $j\neq i.$
\end{enumerate}

\begin{proof}
We prove the claim by using mathematical induction on $n$. The claim follows
by Theorem \ref{cart} for $n=2.$ Now, we assume that the claim holds for all
$k<n$ and prove it for $k=n$. Suppose $N=N_{1}\times N_{2}\times\cdots\times
N_{n}$ is a weakly $S$-prime submodule of $M$. Then Theorem \ref{cart} implies
that $N=N^{\prime}\times N_{n}$ where, say, $N^{\prime}=N_{1}\times
N_{2}\times\cdots\times N_{n-1}$ is a weakly $S$-prime submodule of
$M^{\prime}=M_{1}\times M_{2}\times\cdots\times M_{n-1}$ and $S_{n}\cap
(N_{n}:_{R_{n}}M_{n})\neq\emptyset$. Thus, the result follows by the induction hypothesis.
\end{proof}

Let $M$ be an $R$-module and $S$ be a multiplicatively closed subset of $R$
with $S\cap Ann_{R}(M)=\phi$. Following \cite{S-prime subm}, $M$ is called
$S$-torsion-free if there is $s\in S$ such that whenever $rm=0$ for $r\in R$
and $m\in M$, then $sr=0$ or $sm=0$. Compare with \cite[Proposition
2.24]{S-prime subm}, we have the following result.

\begin{proposition}
Let $S$ be a multiplicatively closed subset of a ring $R$ and $N$ be a
submodule of an $S$-torsion-free $R$-module $M$. If $\eta:R\rightarrow
R/(N:_{R}M)$ is the canonical homomorphism, then $N$ is weakly $S$-prime in
$M$ if and only if $M/N$ is a $\eta(S)$-torsion-free $R/(N:_{R}M)$-module.
\end{proposition}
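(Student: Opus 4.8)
The plan is to unwind both occurrences of the ``$S$-torsion-free'' condition into plain statements about $R$ and $M$, and then match them against the weakly $S$-prime condition, using the standing hypothesis that $M$ is $S$-torsion-free to absorb the ``$0\neq$'' in the definition of weakly $S$-prime. Write $\overline{R}=R/(N:_{R}M)$, so that $\eta(r)=r+(N:_{R}M)$, and regard $M/N$ as an $\overline{R}$-module via $\eta(r)(m+N)=rm+N$. First I would record the dictionary $\eta(s)\eta(r)=0\Leftrightarrow sr\in(N:_{R}M)$ and $\eta(s)(m+N)=0\Leftrightarrow sm\in N$. In particular $\eta(r)\in Ann_{\overline{R}}(M/N)$ iff $rM\subseteq N$ iff $r\in(N:_{R}M)$ iff $\eta(r)=0$, so $Ann_{\overline{R}}(M/N)=\{0\}$; hence the requirement $\eta(S)\cap Ann_{\overline{R}}(M/N)=\emptyset$ is equivalent to the statement that no $s\in S$ lies in $(N:_{R}M)$, that is, to $(N:_{R}M)\cap S=\emptyset$. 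Under this dictionary the assertion ``$M/N$ is $\eta(S)$-torsion-free'' translates to: $(N:_{R}M)\cap S=\emptyset$ and there exists $s\in S$ such that for all $r\in R$ and $m\in M$, $rm\in N$ forces $sr\in(N:_{R}M)$ or $sm\in N$.

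For the ``if'' direction I would simply observe that this translated condition already quantifies over all $r,m$ with $rm\in N$, hence in particular over those with $0\neq rm\in N$; therefore the same witnessing $s$ makes $N$ weakly $S$-prime. Notice that this direction does not even use that $M$ is $S$-torsion-free.

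For the ``only if'' direction, let $s_{1}\in S$ be a weakly $S$-element of $N$ and let $s_{0}\in S$ be an $S$-torsion-free element of $M$, and set $s=s_{0}s_{1}$. Given $r\in R$ and $m\in M$ with $rm\in N$, I would split into two cases. If $rm\neq 0$, then $0\neq rm\in N$ and the weakly $S$-prime property yields $s_{1}r\in(N:_{R}M)$ or $s_{1}m\in N$; multiplying by $s_{0}$ (and using that $(N:_{R}M)$ is an ideal and $N$ a submodule) gives $sr\in(N:_{R}M)$ or $sm\in N$. If $rm=0$, then $S$-torsion-freeness of $M$ gives $s_{0}r=0$ or $s_{0}m=0$, whence $sr=s_{1}s_{0}r=0\in(N:_{R}M)$ or $sm=s_{1}s_{0}m=0\in N$. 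Combined with $(N:_{R}M)\cap S=\emptyset$, which is built into the weakly $S$-prime hypothesis, this is precisely the translated $\eta(S)$-torsion-free condition.

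The only genuinely substantive point is the case $rm=0$ in the forward direction: this is exactly where the standing assumption that $M$ is $S$-torsion-free is consumed, since a weakly $S$-prime submodule imposes nothing when the product vanishes. Everything else is bookkeeping with the canonical map $\eta$, so I expect no further obstacle. Indeed, the argument incidentally shows that under the $S$-torsion-free hypothesis on $M$ the notions of weakly $S$-prime and $S$-prime submodule coincide, which is what makes the equivalence consistent with \cite[Proposition 2.24]{S-prime subm}.
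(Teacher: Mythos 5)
Your proof is correct and follows essentially the same route as the paper's: the substantive (forward) direction is the identical case split on $rm=0$ versus $0\neq rm\in N$, using the $S$-torsion-freeness of $M$ in the first case and the weakly $S$-element in the second, with the product of the two witnesses serving as the single element for $M/N$. The only cosmetic difference is in the converse, where the paper simply cites \cite[Proposition 2.24]{S-prime subm} while you unwind the definitions directly, observing (as the paper implicitly does) that the $\eta(S)$-torsion-free condition on $M/N$ is precisely $S$-primeness of $N$ and hence gives weakly $S$-prime for free.
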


\begin{proof}
First, we clearly note that $s\in S\cap(N:_{R}M)$ if and only if $\bar{s}%
\in\eta(S)\cap Ann_{R/(N:_{R}M)}(M/N)$.

$\Rightarrow)$ Suppose $N$ is weakly $S$-prime in $M$ with weakly $S$-element
$s_{1}\in S$. Let $\bar{r}\in R/(N:_{R}M)$, $\bar{m}\in M/N$ such that
$\bar{r}\bar{m}=\bar{0}$. Then $rm\in N$ and we have two cases. If $rm=0$,
then by assumption there is $s_{2}\in S$ such that $s_{2}r=0$ or $s_{2}m=0$.
Thus $\bar{s}_{2}\bar{r}=\bar{0}$ or $\bar{s}_{2}\bar{m}=\bar{0}$ where
$\bar{s}_{2}\in\eta(S)$ as needed. If $rm\neq0$, then $s_{1}r\in(N:_{R}M)$ or
$s_{1}m\in N$ and so $\bar{s}_{1}\bar{r}=\bar{0}_{R/(N:_{R}M)}$ or $\bar
{s}_{1}\bar{m}=\bar{0}_{M/N}$ where $\bar{s}_{1}\in\eta(S)$. Therefore, $M/N$
is a $\eta(S)$-torsion-free $R/(N:_{R}M)$-module associated to $\bar{s}%
_{1}\bar{s}_{2}\in\eta(S)$.

$\Leftarrow)$ Follows directly by \cite[Proposition 2.24]{S-prime subm}.
\end{proof}

Let $R$ be a ring and $M$ be an $R$-module. Recall that the idealization of
$M$ in $R$ denoted by $R\ltimes M$ is the commutative ring $R\oplus M$ with
coordinate-wise addition and multiplication defined as $(r_{1},m_{1}%
)(r_{2},m_{2})=(r_{1}r_{2},r_{1}m_{2}+r_{2}m_{1})$ \cite{Nagata}. For an ideal
$I$ of $R$ and a submodule $N$ of $M,$ The set $I\ltimes N=I\oplus N\ $is not
always an ideal of $R\ltimes M\ $and it is an ideal if and only if
$IM\subseteq N$ \cite[Theorem 3.1]{AnWi}$.$ Among many other properties of an
ideal $I\ltimes N$ of $R\ltimes M$, we have $\sqrt{I\ltimes N}=\sqrt{I}\ltimes
M$ and in particular, $\sqrt{0\ltimes0}=\sqrt{0}\ltimes M$, \cite[Theorem
3.2]{AnWi}.$\ $It is clear that if $S$ is a multiplicatively closed subset of
$R$ and $N$ a submodule of $M$, then $S\ltimes K=\{(s,k):$ $s\in S$, $k\in
K\}$ is a multiplicatively closed subset of $R\ltimes M$. In \cite[Proposition
27]{WS-prime}, it is proved that if $I\ltimes M$ is a weakly $S\ltimes
M$-prime (or weakly $S\ltimes0$-prime ideal of $R\ltimes M$ where $I$ is an
ideal of $R$ disjoint with $S$, then $I$ is a weakly $S$-prime ideal of $R$.
In general, we have:

\begin{theorem}
\label{Ideal}Let $S$ be a multiplicatively closed subset of a ring $R,$ $I$ be
an ideal of $R$ and $K\subseteq N$ be submodules of an $R$-module $M$ with
$IM\subseteq N$. Let $I\ltimes N$ be a weakly $S\ltimes K$-prime ideal of
$R\ltimes M$. Then
\end{theorem}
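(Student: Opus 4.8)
The plan is to show simultaneously that $I$ is a weakly $S$-prime ideal of $R$ and that $N$ is a weakly $S$-prime submodule of $M$ (the latter under the natural disjointness hypothesis $(N:_{R}M)\cap S=\emptyset$), using a single weakly $(S\ltimes K)$-element of $I\ltimes N$ to manufacture weakly $S$-elements for both structures at once. Since $IM\subseteq N$, the set $I\ltimes N$ really is an ideal of $R\ltimes M$ by \cite[Theorem 3.1]{AnWi}, so the hypothesis is meaningful.

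First I would extract the disjointness consequences. Because $K\subseteq N$, every $(s,k)\in S\ltimes K$ has its second coordinate already in $N$; hence $(S\ltimes K)\cap(I\ltimes N)=\emptyset$ forces $s\notin I$ for all $s\in S$, that is, $S\cap I=\emptyset$. Thus $I$ is a legitimate candidate to be weakly $S$-prime. I then fix a weakly $(S\ltimes K)$-element $(s,k)$ of $I\ltimes N$ and claim that its first coordinate $s$ works as a weakly $S$-element for both $I$ and $N$.

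The heart of the argument is to feed two carefully chosen products into the defining property of $I\ltimes N$. For the ideal claim, given $a,b\in R$ with $0\neq ab\in I$, I would use $(a,0)(b,0)=(ab,0)$, a nonzero element of $I\ltimes N$; the hypothesis then gives $(s,k)(a,0)=(sa,ka)\in I\ltimes N$ or $(s,k)(b,0)=(sb,kb)\in I\ltimes N$, and reading off the first coordinates yields $sa\in I$ or $sb\in I$. For the submodule claim, given $a\in R$ and $m\in M$ with $0\neq am\in N$, I would instead use $(a,0)(0,m)=(0,am)$, again a nonzero element of $I\ltimes N$; now the hypothesis delivers $(sa,ka)\in I\ltimes N$ or $(0,sm)\in I\ltimes N$. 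The second alternative is precisely $sm\in N$, while the first gives $sa\in I$, and here the assumption $IM\subseteq N$ is exactly what upgrades this to $saM\subseteq IM\subseteq N$, i.e. $sa\in(N:_{R}M)$.

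The only delicate point, and the step I expect to be the main obstacle, is the bookkeeping of the properness/disjointness conditions rather than any hard algebra: one must separately secure $(N:_{R}M)\cap S=\emptyset$ before calling $N$ weakly $S$-prime, since the given disjointness only yields $S\cap I=\emptyset$. Once this is in place (as a hypothesis or a short verification), the computation is routine, the single element $s$ serves uniformly, and the hypothesis $IM\subseteq N$ plays its one essential role of converting the ideal-level conclusion $sa\in I$ into the module-level conclusion $sa\in(N:_{R}M)$.
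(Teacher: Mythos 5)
Your argument for the first assertion is correct and is essentially the paper's own proof of that part: the paper uses the same test products $(a,0)(b,0)=(ab,0)$ and $(r,0)(0,m)=(0,rm)$, the same observation that $(S\ltimes K)\cap(I\ltimes N)=\emptyset$ amounts to $S\cap I=\emptyset$ (because $K\subseteq N$), and the same use of $I\subseteq(N:_{R}M)$ coming from $IM\subseteq N$; you also correctly spotted that $(N:_{R}M)\cap S=\emptyset$ has to be imposed separately for the submodule half, which is exactly the proviso the paper attaches to its item (1).

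The genuine gap is coverage: Theorem \ref{Ideal} does not end with that one conclusion. In the paper it carries four enumerated assertions, and your proposal addresses only the first. Missing are: (2) the existence of $s\in S$ controlling the \emph{zero} products, namely that $ab=0$ with $sa\notin I$, $sb\notin I$ forces $a,b\in ann(N)$, and $cm=0$ with $sc\notin(N:_{R}M)$, $sm\notin N$ forces $c\in ann(I)$ and $m\in(0:_{M}I)$; (3) if $I\ltimes N$ is not $S\ltimes K$-prime, then $(s,k)(I\ltimes N)=(sI\ltimes0)\oplus(0\ltimes sN+Ik)$ for some $(s,k)\in S\ltimes K$; and (4) if $I\ltimes N$ is $S\ltimes K$-prime, then $sM\subseteq N$ for some $s\in S$. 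These require ideas beyond your two test products: part (2) needs elements with nonzero second coordinate, e.g. $(a,0)(b,n)=(0,an)$ with $n\in N$, and $(a,m)(c,m)=(ac,am)$ with $a\in I$, chosen precisely so that the product lands in $I\ltimes N$ and is nonzero, yielding a contradiction; parts (3) and (4) are not elementwise computations at all but follow from results on weakly $S$-prime ideals (Proposition 9 and Corollary 6 of \cite{WS-prime}) combined with the idealization identities $\sqrt{0\ltimes0}=\sqrt{0}\ltimes M$ and the product formula from \cite{AnWi}. As written, your proposal establishes roughly a quarter of the statement, so it cannot stand as a proof of the theorem as the paper states it.
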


\begin{enumerate}
\item $I$ is a weakly $S$-prime ideal of $R$ and $N$ a weakly $S$-prime
submodule of $M$ whenever $(N:_{R}M)\cap S=\phi$.

\item There exists $s\in S$ such that for all $a,b\in R$, $ab=0$, $sa\notin
I$, $sb\notin I$ implies $a,b\in ann(N)$ and for all $c\in R$, $m\in M$,
$cm=0$, $sc\notin(N:_{R}M)$, $sm\notin N$ implies $c\in ann(I)$ and
$m\in(0:_{M}I)$.

\item If $I\ltimes N$ is not $S\ltimes K$-prime, then $(s,k)(I\ltimes
N)=(sI\ltimes0)\oplus(0\ltimes sN+Ik)$ for some $(s,k)\in S\ltimes K.$

\item If $I\ltimes N$ is $S\ltimes K$-prime then $sM\subseteq N$ for some
$s\in S$.
\end{enumerate}

\begin{proof}
Let $(s,k)\in S\ltimes K$ be a weakly $S\ltimes K$-element of $I\ltimes N$.

(1) Note that clearly $(S\ltimes K)\cap(I\ltimes N)=\phi$ if and only if
$I\cap S=\phi$. Suppose that $a,b\in R$ with $0\neq ab\in I$. Then
$(0,0)\neq(a,0)(b,0)\in I\ltimes N$ implies that either $(s,k)(a,0)\in
I\ltimes N$ or $(s,k)(b,0)\in I\ltimes N$. Thus, either $sa\in I$ or $sb\in I$
and $I$ is weakly $S$-prime in $R$. Now, let $0\neq rm\in N$ for $r\in R$,
$m\in M$. Then $(0,0)\neq(r,0)(0,m)\in I\ltimes N$ and so
$(sr,rk)=(s,k)(r,0)\in I\ltimes N$ or $(0,sm)=(s,k)(0,m)\in I\ltimes N$. In
the first case, we get $sr\in I\subseteq(N:_{R}M)$ and the second case implies
$sm\in N$. Therefore, $N$ is a weakly $S$-prime submodule of $M$.

(2) Let $a,b\in R$ such that $ab=0$ and $sa\notin I$, $sb\notin I$. Suppose
$a\notin ann(N)$ so that there exists $n\in N$ such that $an\neq0$. Thus,
$(0,0)\neq(a,0)(b,n)=(0,an)\in I\ltimes N$ and so either $(s,k)(a,0)\in
I\ltimes N$ or $(s,k)(b,n)\in I\ltimes N$. Hence, $sa\in I$ or $sb\in I$, a
contradiction. Similarly, if $b\notin ann(N),$ then we get a contradiction.
Therefore, $a,b\in ann(N)$ as needed. Next, we assume $cm=0$ for $c\in R$,
$m\in M$ and $sc\notin(N:_{R}M)$, $sm\notin N$. We have two cases.

Case 1. If $c\notin ann(I)$, then there exists $a\in I$ such that $ca\neq0$.
Hence, $(0,0)\neq(c,0)(a,m)=(ca,0)\in I\ltimes N$ and so $(s,k)(c,0)\in
I\ltimes N$ or $(s,k)(a,m)\in I\ltimes N$. Therefore, $sc\in I\subseteq
(N:_{R}M)$ or $sm+ka\in N$ (and so $sm\in N$ as $K\subseteq N$) which
contradicts the assumption.

Case 2. If $m\notin(0:_{M}I)$, then there exists $a\in I$ such that $am\neq0$.
Thus, $(0,0)\neq(a,m)(c,m)=(ac,am)\in I\ltimes N$ implies either
$(s,k)(a,m)\in I\ltimes N$ or $(s,k)(c,m)\in I\ltimes N$. It follows that
either $sc\in I\subseteq(N:_{R}M)$ or $sm\in N$ which is also a contradiction.

(3) If $I\ltimes N$ is not $S\ltimes K$-prime, then $(s,k)(I\ltimes
N)(\sqrt{0}\ltimes M)=(0,0)$ for some $(s,k)\in S\ltimes K$ by
\cite[Proposition 9]{WS-prime}. Thus, by \cite[Theorem 3.3]{AnWi} $s\sqrt
{0}I\ltimes(sIM+s\sqrt{0}N+\sqrt{0}Ik)=(0,0)$. Then clearly $sIM=0$ and so
$sI\ltimes0$ is an ideal of $R\ltimes M$. Now, $(s,k)(I\ltimes N)=sI\ltimes
(sN+Ik)=(sI\ltimes0)\oplus(0\ltimes sN+Ik)$ as required.

(4) If $I\ltimes N$ is $S\ltimes K$-prime in $R\ltimes M$, then $(s,k)(\sqrt
{0}\ltimes M)\subseteq(I\ltimes N)$ for some $(s,k)\in S\ltimes K$ by
\cite[Corollary 6]{WS-prime}. Thus, $s\sqrt{0}\ltimes(sM+\sqrt{0}%
k)\subseteq(I\ltimes N)$ and so clearly, $sM\subseteq N$ as needed.
\end{proof}

In general if $I$ is a (weakly) $S$-prime ideal of a ring $R$ and $N$ a
(weakly) $S$-prime submodule of an $R$-module $M$, then $I\ltimes N$ need not
be a (weakly) $S\ltimes K$-prime ideal of $R\ltimes M$.

\begin{example}
Consider the multiplicatively closed subset $S=\left\{  3^{n}:n\in%
%TCIMACRO{\U{2115} }%
%BeginExpansion
\mathbb{N}
%EndExpansion
\right\}  $ of $%
%TCIMACRO{\U{2124} }%
%BeginExpansion
\mathbb{Z}
%EndExpansion
$. While clearly $0$ is (weakly) $S$-prime in $%
%TCIMACRO{\U{2124} }%
%BeginExpansion
\mathbb{Z}
%EndExpansion
$ and $\left\langle \bar{2}\right\rangle $ is (weakly) $S$-prime in the $%
%TCIMACRO{\U{2124} }%
%BeginExpansion
\mathbb{Z}
%EndExpansion
$-module $%
%TCIMACRO{\U{2124} }%
%BeginExpansion
\mathbb{Z}
%EndExpansion
_{6}$, the ideal $0\ltimes\left\langle \bar{2}\right\rangle $ is not (weakly)
$S\ltimes0$-prime in $%
%TCIMACRO{\U{2124} }%
%BeginExpansion
\mathbb{Z}
%EndExpansion
\ltimes%
%TCIMACRO{\U{2124} }%
%BeginExpansion
\mathbb{Z}
%EndExpansion
_{6}$. Indeed, $(0,0)\neq(0,\bar{1})(2,\bar{1})=(0,\bar{2})\in0\ltimes
\left\langle \bar{2}\right\rangle $ but $(s,\bar{0})(0,\bar{1})\notin
0\ltimes\left\langle \bar{2}\right\rangle $ and $(s,\bar{0})(2,\bar{1}%
)\notin0\ltimes\left\langle \bar{2}\right\rangle $ for all $s\in S$.
\end{example}

\section{(Weakly) S-prime Submodules of Amalgamation Modules}

Let $R$ be a ring, $J$ an ideal of $R$ and $M$ an $R$-module. We recall that
the set%

\[
R\Join J=\left\{  (r,r+j):r\in R\text{ , }j\in J\right\}
\]
is a subring of $R\times R$ called the the amalgamated duplication of $R$
along $J$, see \cite{Danna}. Recently, in \cite{Bouba}, the duplication of the
$R$-module $M$ along the ideal $J$ denoted by $M\Join J$ is defined as%

\[
M\Join J=\left\{  (m,m^{\prime})\in M\times M:m-m^{\prime}\in JM\right\}
\]
which is an $(R\Join J)$-module with scalar multiplication defined by
$(r,r+j).(m,m^{\prime})=(rm,(r+j)m^{\prime})$ for $r\in R$, $j\in J$ and
$(m,m^{\prime})\in M\Join J$. Many properties and results concerning this kind
of modules can be found in \cite{Bouba}.

Let $N$ be a submodule of an $R$-module $M$ and $J$ be an ideal of $R$. Then clearly%

\[
N\Join J=\left\{  (n,m)\in N\times M:n-m\in JM\right\}
\]
and
\[
\bar{N}=\left\{  (m,n)\in M\times N:m-n\in JM\right\}
\]

are submodules of $M\Join J$. If $S$ is a multiplicatively closed subset of
$R$, then obviously, the sets $S\Join J=\left\{  (s,s+j):s\in S\text{, }j\in
J\right\}  $ and $\bar{S}=\left\{  (r,r+j):r+j\in S\right\}  $ are
multiplicatively closed subsets of $R\Join J$.

In general, let $f:R_{1}\rightarrow R_{2}$ be a ring homomorphism, $J$ be an
ideal of $R_{2}$, $M_{1}$ be an $R_{1}$-module, $M_{2}$ be an $R_{2}$-module
(which is an $R_{1}$-module induced naturally by $f$) and $\varphi
:M_{1}\rightarrow M_{2}$ be an $R_{1}$-module homomorphism. The subring
\[
R_{1}\Join^{f}J=\left\{  (r,f(r)+j):r\in R_{1}\text{, }j\in J\right\}
\]
of $R_{1}\times R_{2}$ is called the amalgamation of $R_{1}$ and $R_{2}$ along
$J$ with respect to $f$. In \cite{Rachida}, the amalgamation of $M_{1}$ and
$M_{2}$ along $J$ with respect to $\varphi$ is defined as%

\[
M_{1}\Join^{\varphi}JM_{2}=\left\{  (m_{1},\varphi(m_{1})+m_{2}):m_{1}\in
M_{1}\text{ and }m_{2}\in JM_{2}\right\}
\]
which is an $(R_{1}\Join^{f}J)$-module with the scalar product defined as
\[
(r,f(r)+j)(m_{1},\varphi(m_{1})+m_{2})=(rm_{1},\varphi(rm_{1})+f(r)m_{2}%
+j\varphi(m_{1})+jm_{2})
\]
For submodules $N_{1}$ and $N_{2}$ of $M_{1}$ and $M_{2}$, respectively,
clearly the sets
\[
N_{1}\Join^{\varphi}JM_{2}=\left\{  (m_{1},\varphi(m_{1})+m_{2})\in M_{1}%
\Join^{\varphi}JM_{2}:m_{1}\in N_{1}\right\}
\]
and
\[
\overline{N_{2}}^{\varphi}=\left\{  (m_{1},\varphi(m_{1})+m_{2})\in M_{1}%
\Join^{\varphi}JM_{2}:\text{ }\varphi(m_{1})+m_{2}\in N_{2}\right\}
\]
are submodules of $M_{1}\Join^{\varphi}JM_{2}$. Moreover if $S_{1}$ and
$S_{2}$ are multiplicatively closed subsets of $R_{1}$ and $R_{2}$,
respectively, then
\[
S_{1}\Join^{f}J=\left\{  (s_{1},f(s_{1})+j):s\in S_{1}\text{, \ }j\in
J\right\}
\]
and
\[
\overline{S_{2}}^{\varphi}=\left\{  (r,f(r)+j):r\in R_{1}\text{, \ }f(r)+j\in
S_{2}\right\}
\]
are clearly multiplicatively closed subsets of $M_{1}\Join^{\varphi}JM_{2}$.

Note that if $R=R_{1}=R_{2}$, $M=M_{1}=M_{2}$, $f=Id_{R}$ and $\varphi=Id_{M}%
$, then the amalgamation of $M_{1}$ and $M_{2}$ along $J$ with respect to
$\varphi$ is exactly the duplication of the $R$-module $M$ along the ideal
$J$. Moreover, in this case, we have $N_{1}\Join^{\varphi}JM_{2}=N\Join J$,
$\overline{N_{2}}^{\varphi}=\bar{N}$, $S_{1}\Join^{f}J=S\Join J$ and
$\overline{S_{2}}^{\varphi}=\bar{S}$.

\begin{theorem}
\label{Amalg}Consider the $(R_{1}\Join^{f}J)$-module $M_{1}\Join^{\varphi
}JM_{2}$ defined as above. Let $S$ be a multiplicatively closed subsets of
$R_{1}$ and $N_{1}$ be submodule of $M_{1}$. Then
\end{theorem}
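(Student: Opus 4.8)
The plan is to transfer (weakly) $S \Join^f J$-primeness of $N_1 \Join^{\varphi} J M_2$ back and forth with (weakly) $S$-primeness of $N_1$, and the first order of business is to pin down the relevant residual. Expanding the scalar action, $(r, f(r)+j)(m_1, \varphi(m_1)+m_2)$ lands in $N_1 \Join^{\varphi} J M_2$ precisely when its first coordinate $r m_1$ lies in $N_1$, since the second coordinate $\varphi(rm_1)+f(r)m_2+j\varphi(m_1)+jm_2$ is then automatically of the admissible form $\varphi(N_1) + J M_2$. Running this over all $m_1 \in M_1$ gives
\[
\bigl(N_1 \Join^{\varphi} J M_2 :_{R_1 \Join^f J} M_1 \Join^{\varphi} J M_2\bigr) = (N_1 :_{R_1} M_1) \Join^f J,
\]
from which the well-definedness clause transfers at once: $S \Join^f J$ meets this residual if and only if $S$ meets $(N_1 :_{R_1} M_1)$.

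For the $S$-prime statement both directions are clean, and I would prove them by promoting a single element. If $N_1$ is $S$-prime with element $s$, I would show $(s, f(s))$ works for $N_1 \Join^{\varphi} J M_2$: any product lying in $N_1 \Join^{\varphi} J M_2$ has first coordinate $a m_1 \in N_1$, so $S$-primeness of $N_1$ yields $s a \in (N_1 :_{R_1} M_1)$ or $s m_1 \in N_1$, and the residual formula places $(s,f(s))$ times the scalar in the residual, respectively $(s,f(s))$ times the module element in $N_1 \Join^{\varphi} J M_2$. For the reverse I test with the zero-second-coordinate elements $(a, f(a))$ and $(m_1, \varphi(m_1))$, whose product is $(a m_1, \varphi(a m_1))$; its membership in $N_1 \Join^{\varphi} J M_2$ is equivalent to $a m_1 \in N_1$, and the two alternatives of the amalgamated condition read off on first coordinates as exactly the $S$-prime condition for $N_1$.

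For the weakly direction from the amalgamation down to $N_1$, the same test elements work unchanged: given $0 \neq a m_1 \in N_1$, the element $(a m_1, \varphi(a m_1)) = (a, f(a))(m_1, \varphi(m_1))$ is a \emph{nonzero} member of $N_1 \Join^{\varphi} J M_2$, so a weakly $S \Join^f J$-element $(s, f(s)+j_0)$ forces $s a \in (N_1 :_{R_1} M_1)$ or $s m_1 \in N_1$ on first coordinates, exhibiting $s$ as a weakly $S$-element of $N_1$.

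The hard part is the converse in the weakly case, and it is exactly where the ``$0 \neq$'' restriction obstructs the argument. Starting from a weakly $S$-element $s$ of $N_1$ and a product $(0,0) \neq (a, f(a)+j)(m_1, \varphi(m_1)+m_2)$ lying in $N_1 \Join^{\varphi} J M_2$, I would split on whether the first coordinate $a m_1$ vanishes. When $a m_1 \neq 0$ one has $0 \neq a m_1 \in N_1$ and the forward analysis runs in reverse, with $(s, f(s))$ as candidate weakly element. The obstruction is the degenerate subcase $a m_1 = 0$ with the product nonzero through its second coordinate alone: here weakly $S$-primeness of $N_1$ cannot be invoked, and neither $s a \in (N_1 :_{R_1} M_1)$ nor $s m_1 \in N_1$ is available for free. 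I expect this to be the crux, and to clear it one must lean on the extra hypotheses attached to the statement (for instance a $\sqrt{0_{R}}$-annihilation condition in the spirit of Proposition \ref{p1}, or a structural assumption forcing such second-coordinate-only products to vanish), after which the degenerate subcase is absorbed and the desired (weakly) $S \Join^f J$-prime conclusion for $N_1 \Join^{\varphi} J M_2$ follows.
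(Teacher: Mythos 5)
Your residual computation $\bigl(N_{1}\Join^{\varphi}JM_{2}:_{R_{1}\Join^{f}J}M_{1}\Join^{\varphi}JM_{2}\bigr)=(N_{1}:_{R_{1}}M_{1})\Join^{f}J$ is correct, and your handling of item (1) (the $S$-prime biconditional) is exactly the paper's argument: promote $s$ to $(s,f(s))$ in one direction, and in the other test with the zero-$J$-component elements $(a,f(a))$ and $(m_{1},\varphi(m_{1}))$, reading everything off the first coordinate. The same is true of your direction ``$N_{1}\Join^{\varphi}JM_{2}$ weakly $S\Join^{f}J$-prime $\Rightarrow$ $N_{1}$ weakly $S$-prime'' in item (2).

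What you could not see is that the paper's item (2) is a biconditional whose right-hand side is ``$N_{1}$ is weakly $S$-prime \emph{and} for all $r_{1}\in R_{1}$, $m_{1}\in M_{1}$ with $r_{1}m_{1}=0$ but $s_{1}r_{1}\notin(N_{1}:_{R_{1}}M_{1})$ and $s_{1}m_{1}\notin N_{1}$ for all $s_{1}\in S$, one has $f(r_{1})m_{2}+j\varphi(m_{1})+jm_{2}=0$ for every $j\in J$ and $m_{2}\in JM_{2}$.'' This is precisely the ``structural assumption forcing second-coordinate-only products to vanish'' that you predicted, so your diagnosis of the crux is right: with this hypothesis, your case split on $r_{1}m_{1}\neq0$ versus $r_{1}m_{1}=0$ completes the converse just as in the paper, since in the degenerate case the nonvanishing second coordinate triggers the contrapositive of the condition and yields $s'\in S$ with $s'r_{1}\in(N_{1}:_{R_{1}}M_{1})$ or $s'm_{1}\in N_{1}$. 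The one genuine omission is the other half of the forward direction: since (2) is an equivalence, weak $S\Join^{f}J$-primeness must also be shown to \emph{imply} that degeneracy condition, and your test elements with zero $J$-components can never detect it, because their product has zero second coordinate as soon as $am_{1}=0$. The paper gets it by contrapositive using the full elements: if $r_{1}m_{1}=0$ but $f(r_{1})m_{2}+j\varphi(m_{1})+jm_{2}\neq0$ for some $j\in J$, $m_{2}\in JM_{2}$, then $(r_{1},f(r_{1})+j)(m_{1},\varphi(m_{1})+m_{2})=(0,f(r_{1})m_{2}+j\varphi(m_{1})+jm_{2})$ is a \emph{nonzero} element of $N_{1}\Join^{\varphi}JM_{2}$ (its first coordinate $0$ lies in $N_{1}$), so the weakly $S\Join^{f}J$-element applies and gives $sr_{1}\in(N_{1}:_{R_{1}}M_{1})$ or $sm_{1}\in N_{1}$, which is exactly the negation of the condition's antecedent. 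Adding this step makes your proposal match the paper's proof in full.
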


\begin{enumerate}
\item $N_{1}\Join^{\varphi}JM_{2}$ is an $S\Join^{f}J$-prime submodule of
$M_{1}\Join^{\varphi}JM_{2}$ if and only if $N_{1}$ is an $S$-prime submodule
of $M_{1}$.

\item $N_{1}\Join^{\varphi}JM_{2}$ is a weakly $S\Join^{f}J$-prime submodule
of $M_{1}\Join^{\varphi}JM_{2}$ if and only if $N_{1}$ is a weakly $S$-prime
submodule of $M_{1}$ and for $r_{1}\in R_{1}$, $m_{1}\in M_{1}$ with
$r_{1}m_{1}=0$ but $s_{1}r_{1}\notin(N_{1}:_{R_{1}}M_{1})$ and $s_{1}%
m_{1}\notin N_{1}$ for all $s_{1}\in S$, then $f(r_{1})m_{2}+j\phi
(m_{1})+jm_{2}=0$ for every $j\in J$ and $m_{2}\in JM_{2}$.
\end{enumerate}

\begin{proof}
We clearly note that $(N_{1}\Join^{\varphi}JM_{2}:_{R_{1}\Join^{f}J}M_{1}%
\Join^{\varphi}JM_{2})\cap S\Join^{f}J=\phi$ if and only if $(N_{1}:_{R_{1}%
}M_{1})\cap S=\phi$.

(1) Suppose $(s,f(s)+j)$ is an $S\Join^{f}J$-element of $N_{1}\Join^{\varphi
}JM_{2}$ and let $r_{1}m_{1}\in N_{1}$ for $r_{1}\in R_{1}$ and $m_{1}\in
M_{1}$. Then $(r_{1},f(r_{1}))\in R_{1}\Join^{f}J$ and $(m_{1},\varphi
(m_{1}))\in M_{1}\Join^{\varphi}JM_{2}$ with $(r_{1},f(r_{1}))(m_{1}%
,\varphi(m_{1}))=(r_{1}m_{1},\varphi(r_{1}m_{1}))\in N_{1}\Join^{\varphi
}JM_{2}$. Thus, either
\[
(s,f(s)+j)(r_{1},f(r_{1}))\in(N_{1}\Join^{\varphi}JM_{2}:_{R_{1}\Join^{f}%
J}M_{1}\Join^{\varphi}JM_{2})
\]
or%
\[
(s,f(s)+j)(m_{1},\varphi(m_{1}))\in N_{1}\Join^{\varphi}JM_{2}%
\]
In the first case, for all $m\in M_{1}$, $(s,f(s)+j)(r_{1},f(r_{1}%
))(m,\varphi(m))\in N_{1}\Join^{\varphi}JM_{2}$ and so $sr_{1}M_{1}\subseteq
N_{1}$. In the second case, $sm_{1}\in N_{1}$ and so $N_{1}$ is an $S$-prime
submodule of $M_{1}$. Conversely, let $s$ be an $S$-element of $N_{1}$. Let
$(r_{1},f(r_{1})+j_{1})\in R_{1}\Join^{f}J$ and $(m_{1},\varphi(m_{1}%
)+m_{2})\in M_{1}\Join^{\varphi}JM_{2}$ such that
\begin{align*}
&  (r_{1}m_{1},\varphi(r_{1}m_{1})+f(r_{1})m_{2}+j_{1}\varphi(m_{1}%
)+j_{1}m_{2})\\
&  =(r_{1},f(r_{1})+j_{1})(m_{1},\varphi(m_{1})+m_{2})\in N_{1}\Join^{\varphi
}JM_{2}%
\end{align*}
Then $r_{1}m_{1}\in N_{1}$ and hence either $sr_{1}M_{1}\subseteq N_{1}$ or
$sm_{1}\in N_{1}$. If $sr_{1}M_{1}\subseteq N_{1}$, then clearly
$(s,f(s))(r_{1},f(r_{1})+j_{1})\in(N_{1}\Join^{\varphi}JM_{2}:_{R_{1}\Join
^{f}J}M_{1}\Join^{\varphi}JM_{2})$ and if $sm_{1}\in N_{1}$, then
$(s,f(s))(m_{1},\varphi(m_{1})+m_{2})\in N_{1}\Join^{\varphi}JM_{2}$.
Therefore, $N_{1}\Join^{\varphi}JM_{2}$ is an $S\Join^{f}J$-prime submodule of
$M_{1}\Join^{\varphi}JM_{2}$ associated to $(s,f(s))\in S\Join^{f}J$.

(2) Suppose $(s,f(s)+j)$ is a weakly $S\Join^{f}J$-element of $N_{1}%
\Join^{\varphi}JM_{2}$. Let $r_{1}\in R_{1}$ and $m_{1}\in M_{1}$ such that
$0\neq r_{1}m_{1}\in N_{1}$ so that $(0,0)\neq(r_{1},f(r_{1}))(m_{1}%
,\varphi(m_{1}))=(r_{1}m_{1},\varphi(r_{1}m_{1}))\in N_{1}\Join^{\varphi
}JM_{2}$. By assumption, either $(s,f(s)+j)(r_{1},f(r_{1}))\in(N_{1}%
\Join^{\varphi}JM_{2}:_{R_{1}\Join^{f}J}M_{1}\Join^{\varphi}JM_{2})$ or
$(s,f(s)+j)(m_{1},\varphi(m_{1}))\in N_{1}\Join^{\varphi}JM_{2}$ and so
$N_{1}$ is $S$-prime in $M_{1}$ as in the proof of (1). Now, we use the
contrapositive to prove the other part. Let $r_{1}\in R_{1}$, $m_{1}\in M_{1}$
with $r_{1}m_{1}=0$ and $f(r_{1})m_{2}+j\phi(m_{1})+jm_{2}\neq0$ for some
$j\in J$ and some $m_{2}\in JM_{2}$. Then
\begin{align*}
(0,0)  &  \neq(r_{1},f(r_{1})+j)(m_{1},\varphi(m_{1})+m_{2})\\
&  =(0,f(r_{1})m_{2}+j\varphi(m_{1})+jm_{2})\in N_{1}\Join^{\varphi}JM_{2}%
\end{align*}
By assumption, either $(s,f(s)+j)(r_{1},f(r_{1})+j)\in(N_{1}\Join^{\varphi
}JM_{2}:_{R_{1}\Join^{f}J}M_{1}\Join^{\varphi}JM_{2})$ or $(s,f(s)+j)(m_{1}%
,\varphi(m_{1})+m_{2})\in N_{1}\Join^{\varphi}JM_{2}$ and so again $sr_{1}%
\in(N_{1}:_{R_{1}}M_{1})$ or $sm_{1}\in N_{1}$ as needed. Conversely, let $s$
be a weakly $S$-element of $N_{1}$ and let $(r_{1},f(r_{1})+j)\in R_{1}%
\Join^{f}J$ and $(m_{1},\varphi(m_{1})+m_{2})\in M_{1}\Join^{\varphi}JM_{2}$
such that
\begin{align*}
(0,0)  &  \neq(r_{1}m_{1},\varphi(r_{1}m_{1})+f(r_{1})m_{2}+j\varphi
(m_{1})+jm_{2})\\
&  =(r_{1},f(r_{1})+j)(m_{1},\varphi(m_{1})+m_{2})\in N_{1}\Join^{\varphi
}JM_{2}%
\end{align*}
If $0\neq r_{1}m_{1}$, then the proof is similar to that of (1). Suppose
$r_{1}m_{1}=0$. Then $f(r_{1})m_{2}+j\varphi(m_{1})+jm_{2}\neq0$ and so by
assumption there exists $s^{\prime}\in S$ such that either $s^{\prime}r_{1}%
\in(N_{1}:_{R_{1}}M_{1})$ or $s^{\prime}m_{1}\in N_{1}$. Thus, $(s^{\prime
},f(s^{\prime}))(r_{1},f(r_{1})+j)\in(N_{1}\Join^{\varphi}JM_{2}:_{R_{1}%
\Join^{f}J}M_{1}\Join^{\varphi}JM_{2})$ or $(s^{\prime},f(s^{\prime}%
))(m_{1},\varphi(m_{1})+m_{2})\in N_{1}\Join^{\varphi}JM_{2}$. Therefore,
$N_{1}\Join^{\varphi}JM_{2}$ is a weakly $S\Join^{f}J$-prime submodule of
$M_{1}\Join^{\varphi}JM_{2}$ associated to $(ss^{\prime},f(ss^{\prime}))\in
S\Join^{f}J$.
\end{proof}

In particular, if $S$ is a multiplicatively closed subset of $R_{1}$, then
$S\times f(S)$ is a multiplicatively closed subset of $R_{1}\Join^{f}J$.
Moreover, one can similarly prove Theorem \ref{Amalg} if we consider $S\times
f(S)$ instead of $S\Join^{f}J$.

\begin{corollary}
\label{ca1}Consider the $(R_{1}\Join^{f}J)$-module $M_{1}\Join^{\varphi}%
JM_{2}$ defined as in Theorem \ref{Amalg} and let $N_{1}$ be a submodule of
$M_{1}$. Then
\end{corollary}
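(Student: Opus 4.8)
The plan is to read this corollary directly off Theorem \ref{Amalg}, since its setup is a special instance of the general amalgamation treated there; the only real task is to rewrite the two equivalences of that theorem in the simpler shape the corollary's hypotheses permit. I would first dispose of the $S\Join^f J$-prime statement by quoting Theorem \ref{Amalg}(1) verbatim: it already asserts, with no auxiliary condition, that $N_1\Join^\varphi JM_2$ is $S\Join^f J$-prime exactly when $N_1$ is $S$-prime in $M_1$, so that part needs nothing further.

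For the weakly $S$-prime statement I would begin from Theorem \ref{Amalg}(2), which characterizes $N_1\Join^\varphi JM_2$ as weakly $S\Join^f J$-prime by two requirements: that $N_1$ be weakly $S$-prime in $M_1$, and that a side condition hold, namely that whenever $r_1\in R_1$ and $m_1\in M_1$ satisfy $r_1m_1=0$ together with $s_1r_1\notin(N_1:_{R_1}M_1)$ and $s_1m_1\notin N_1$ for every $s_1\in S$, one has $f(r_1)m_2+j\varphi(m_1)+jm_2=0$ for all $j\in J$ and $m_2\in JM_2$. The heart of the argument is to feed the corollary's specialized data into this side condition and simplify the expression $f(r_1)m_2+j\varphi(m_1)+jm_2$ term by term, thereby showing it either vanishes identically (making the condition vacuous) or collapses to the explicit identity stated in the corollary.

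I expect the main obstacle to be bookkeeping rather than any genuine difficulty: one has to track the identifications between the amalgamated objects and their specialized forms (for instance the reductions $M_1\Join^\varphi JM_2\rightsquigarrow M\Join J$, $S\Join^f J\rightsquigarrow S\Join J$, and $N_1\Join^\varphi JM_2\rightsquigarrow N\Join J$ in the duplication situation) and check that the scalar multiplication of $M_1\Join^\varphi JM_2$ restricts as claimed, so that the side condition of Theorem \ref{Amalg}(2) truly reduces to the asserted form. Since each such reduction is a routine substitution using that $j$ ranges over $J$ and $m_2$ over $JM_2$, no idea beyond Theorem \ref{Amalg} is required, and the corollary follows at once.
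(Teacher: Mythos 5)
Your proposal misreads what Corollary \ref{ca1} asserts, and this produces a genuine gap. The corollary is \emph{not} the duplication specialization of Theorem \ref{Amalg} (that is Corollary \ref{Dup}); it keeps the fully general amalgamation $M_{1}\Join^{\varphi}JM_{2}$ and instead specializes the \emph{multiplicatively closed set}, so as to convert the $S$-statements of Theorem \ref{Amalg} into statements about ordinary prime and weakly prime submodules. Consequently your treatment of part (1) --- ``quote Theorem \ref{Amalg}(1) verbatim, nothing further needed'' --- does not prove what is claimed: Theorem \ref{Amalg}(1) says $N_{1}\Join^{\varphi}JM_{2}$ is $S\Join^{f}J$-prime iff $N_{1}$ is $S$-prime, whereas the corollary says $N_{1}\Join^{\varphi}JM_{2}$ is \emph{prime} iff $N_{1}$ is \emph{prime}. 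The missing step is to set $S=\{1_{R_{1}}\}$; likewise in part (2) the only simplification is that the quantifier ``for all $s_{1}\in S$'' collapses, while the side condition $f(r_{1})m_{2}+j\varphi(m_{1})+jm_{2}=0$ survives verbatim (it neither vanishes nor reduces, contrary to your expectation). The reductions you list as the ``main obstacle'' ($M_{1}\Join^{\varphi}JM_{2}\rightsquigarrow M\Join J$, $S\Join^{f}J\rightsquigarrow S\Join J$, $N_{1}\Join^{\varphi}JM_{2}\rightsquigarrow N\Join J$) are irrelevant here; they belong to Corollaries \ref{Dup} and \ref{Dup2}.

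Moreover, even once one takes $S=\{1_{R_{1}}\}$, applying Theorem \ref{Amalg} literally is not enough, and this is precisely the subtlety the paper's one-line proof handles. With $S=\{1_{R_{1}}\}$ the set $S\Join^{f}J=\{(1_{R_{1}},1_{R_{2}}+j):j\in J\}$ is strictly larger than $\{(1_{R_{1}},1_{R_{2}})\}$, so ``(weakly) $S\Join^{f}J$-prime'' permits a (weakly) $S$-element of the form $(1,1+j)$ with $j\neq 0$ and is a priori \emph{weaker} than ``(weakly) prime''. The paper circumvents this via the remark immediately preceding the corollary: Theorem \ref{Amalg} remains valid with $S\times f(S)$ in place of $S\Join^{f}J$, and for $S=\{1_{R_{1}}\}$ one has $S\times f(S)=\{(1_{R_{1}},1_{R_{2}})\}$, for which being (weakly) $S\times f(S)$-prime literally coincides with being (weakly) prime. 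Your proposal never chooses a multiplicative set at all, so it cannot see this issue; a correct write-up must both make the specialization $S=\{1_{R_{1}}\}$ explicit and invoke the $S\times f(S)$ variant of the theorem.
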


\begin{enumerate}
\item $N_{1}\Join^{\varphi}JM_{2}$ is a prime submodule of $M_{1}%
\Join^{\varphi}JM_{2}$ if and only if $N_{1}$ is a prime submodule of $M_{1}$.

\item $N_{1}\Join^{\varphi}JM_{2}$ is a weakly prime submodule of $M_{1}%
\Join^{\varphi}JM_{2}$ if and only if $N_{1}$ is a weakly prime submodule of
$M_{1}$ and for $r_{1}\in R_{1}$, $m_{1}\in M_{1}$ with $r_{1}m_{1}=0$ but
$r_{1}\notin(N_{1}:_{R_{1}}M_{1})$ and $m_{1}\notin N_{1}$, then
$f(r_{1})m_{2}+j\phi(m_{1})+jm_{2}=0$ for every $j\in J$ and $m_{2}\in JM_{2}$.
\end{enumerate}

\begin{proof}
We just take $S=\left\{  1_{R_{1}}\right\}  $ (and so $S\times f(S)=\left\{
(1_{R_{1}},1_{R_{2}})\right\}  $) and use Theorem \ref{Amalg}.
\end{proof}

\begin{theorem}
\label{Amalg2}Consider the $(R_{1}\Join^{f}J)$-module $M_{1}\Join^{\varphi
}JM_{2}$ defined as in Theorem \ref{Amalg} where $f$ and $\varphi$ are
epimorphisms. Let $S$ be a multiplicatively closed subsets of $R_{2}$ and
$N_{2}$ be a submodule of $M_{2}$. Then
\end{theorem}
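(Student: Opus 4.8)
The backbone of the argument is the second-coordinate projection $\pi:M_{1}\Join^{\varphi}JM_{2}\rightarrow M_{2}$, $\pi(m_{1},\varphi(m_{1})+m_{2})=\varphi(m_{1})+m_{2}$, read together with the ring projection $\rho:R_{1}\Join^{f}J\rightarrow R_{2}$, $\rho(r,f(r)+j)=f(r)+j$. Using $\varphi(rm_{1})=f(r)\varphi(m_{1})$ one checks that the second coordinate of $(r,f(r)+j)(m_{1},\varphi(m_{1})+m_{2})$ equals $(f(r)+j)(\varphi(m_{1})+m_{2})$, so $\pi$ is a homomorphism of modules over $\rho$. This yields a translation dictionary that I would set up first: writing $b=f(r)+j$ and $n=\varphi(m_{1})+m_{2}$, the product lies in $\overline{N_{2}}^{\varphi}$ exactly when $bn\in N_{2}$; since $\varphi$ is an epimorphism, $\pi$ is surjective, so $(a,f(a)+j)\in(\overline{N_{2}}^{\varphi}:_{R_{1}\Join^{f}J}M_{1}\Join^{\varphi}JM_{2})$ holds precisely when $f(a)+j\in(N_{2}:_{R_{2}}M_{2})$; and $\overline{N_{2}}^{\varphi}\cap\overline{S}^{\varphi}=\emptyset$ is equivalent to $(N_{2}:_{R_{2}}M_{2})\cap S=\emptyset$.

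For the $S$-prime equivalence I would transport each direction along $\pi$ and $\rho$. For sufficiency, lift an $S$-element $s$ of $N_{2}$ to $(r_{0},s)\in\overline{S}^{\varphi}$ via $f$ an epimorphism with $f(r_{0})=s$; given a product lying in $\overline{N_{2}}^{\varphi}$, push it through $\pi$ to obtain $bn\in N_{2}$, apply $S$-primeness of $N_{2}$, and read the alternatives $sb\in(N_{2}:_{R_{2}}M_{2})$ and $sn\in N_{2}$ back through the dictionary. For necessity, given $bn\in N_{2}$ use that $f$ and $\varphi$ are epimorphisms to write $b=f(r)$ and $n=\varphi(m_{1})$, so that $(r,f(r))(m_{1},\varphi(m_{1}))\in\overline{N_{2}}^{\varphi}$; applying $\overline{S}^{\varphi}$-primeness with its element $(r_{0},f(r_{0})+j_{0})$ and projecting back gives $sb\in(N_{2}:_{R_{2}}M_{2})$ or $sn\in N_{2}$ with $s=f(r_{0})+j_{0}$.

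For the weakly $S$-prime equivalence the same translation works whenever the $\pi$-image $bn$ of the product is nonzero, and that regime reduces to weak $S$-primeness of $N_{2}$ exactly as above. The genuinely new phenomenon, and the step I expect to be the main obstacle, is that a product in $M_{1}\Join^{\varphi}JM_{2}$ can be nonzero while its second coordinate $bn$ vanishes, namely when the first coordinate $rm_{1}\neq0$. There $N_{2}$ is silent, so an extra hypothesis is forced: in exactly the situation where $bn=0$ yet no $s\in S$ gives $sb\in(N_{2}:_{R_{2}}M_{2})$ or $sn\in N_{2}$, one must have $rm_{1}=0$, so the product is itself zero and not a counterexample. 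I would prove sufficiency by splitting a nonzero product in $\overline{N_{2}}^{\varphi}$ into the cases $bn\neq0$ and $bn=0$, invoking weak $S$-primeness of $N_{2}$ in the first and the extra condition in the second; necessity would follow by feeding the two source configurations (a nonzero $bn\in N_{2}$, and a pair with $bn=0$ but $rm_{1}\neq0$) into the definition of weakly $\overline{S}^{\varphi}$-prime, mirroring the proof of Theorem \ref{Amalg}(2). The delicate points are keeping the lifted $\overline{S}^{\varphi}$-element compatible under $f$ with the chosen $S$-element of $N_{2}$, and invoking the epimorphism hypotheses on $f$ and $\varphi$ at precisely the places where surjectivity of $\pi$ and the existence of preimages are used.
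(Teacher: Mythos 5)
Your first half is correct and is essentially the paper's own argument for part (1): the paper, too, works through the second-coordinate identity $(r,f(r)+j)(m_{1},\varphi(m_{1})+m_{2})\mapsto(f(r)+j)(\varphi(m_{1})+m_{2})$, lifts an $S$-element $s=f(t)$ of $N_{2}$ to $(t,s)$, and uses surjectivity of $f$ and $\varphi$ to realize an arbitrary relation $r_{2}m_{2}\in N_{2}$ as a product $(r_{1},r_{2})(m_{1},m_{2})\in\overline{N_{2}}^{\varphi}$; your dictionary merely packages these computations through $\pi$ and $\rho$. One small imprecision: the relevant disjointness is $(\overline{N_{2}}^{\varphi}:_{R_{1}\Join^{f}J}M_{1}\Join^{\varphi}JM_{2})\cap\overline{S}^{\varphi}=\emptyset$, not $\overline{N_{2}}^{\varphi}\cap\overline{S}^{\varphi}=\emptyset$ (a submodule and a subset of the ring); your residual dictionary does give the correct equivalence with $(N_{2}:_{R_{2}}M_{2})\cap S=\emptyset$.

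The genuine gap is that your second half proves the wrong statement. Theorem \ref{Amalg2} contains no weakly prime assertion; its part (2) says that if $\overline{N_{2}}^{\varphi}$ is $\overline{S}^{\varphi}$-prime and $(N_{2}:_{R_{2}}JM_{2})\cap S=\phi$, then the residual $(N_{2}:_{M_{2}}J)$ is an $S$-prime submodule of $M_{2}$. What you argue instead --- weakly $\overline{S}^{\varphi}$-primeness of $\overline{N_{2}}^{\varphi}$ being equivalent to weak $S$-primeness of $N_{2}$ together with the condition forcing $r_{1}m_{1}=0$ when the second coordinate of a product vanishes --- is part (1) of Theorem \ref{Amalg3}, a separate theorem, and the fact that the hypothesis $(N_{2}:_{R_{2}}JM_{2})\cap S=\phi$ never appears in your proposal is the telltale sign. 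The missing argument also needs elements your dictionary does not see: from $r_{2}m_{2}\in(N_{2}:_{M_{2}}J)$ one tests, for every $j\in J$, the products $(r_{1},r_{2})(0,jm_{2})=(0,r_{2}jm_{2})\in\overline{N_{2}}^{\varphi}$ (here $(0,jm_{2})$ uses the $JM_{2}$-component, not the image of $\varphi$), applies $\overline{S}^{\varphi}$-primeness with element $(t,s)$, and observes that the first alternative $(t,s)(r_{1},r_{2})\in(\overline{N_{2}}^{\varphi}:_{R_{1}\Join^{f}J}M_{1}\Join^{\varphi}JM_{2})$ is independent of $j$: either it holds, giving $sr_{2}\in(N_{2}:_{R_{2}}JM_{2})=((N_{2}:_{M_{2}}J):_{R_{2}}M_{2})$, or else for all $j$ the second alternative gives $sjm_{2}\in N_{2}$, i.e., $sm_{2}\in(N_{2}:_{M_{2}}J)$. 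Finally, the hypothesis $(N_{2}:_{R_{2}}JM_{2})\cap S=\phi$ is exactly what makes $(N_{2}:_{M_{2}}J)$ eligible to be $S$-prime, since $((N_{2}:_{M_{2}}J):_{R_{2}}M_{2})=(N_{2}:_{R_{2}}JM_{2})$ must be disjoint from $S$. None of this is recoverable from the parts of your proposal as written.
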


\begin{enumerate}
\item $N_{2}$ is an $S$-prime submodule of $M_{2}$ if and only if
$\overline{N_{2}}^{\varphi}$ is an $\overline{S}^{\varphi}$-prime submodule of
$M_{1}\Join^{\varphi}JM_{2}$.

\item If $\overline{N_{2}}^{\varphi}$ is an $\overline{S}^{\varphi}$-prime
submodule of $M_{1}\Join^{\varphi}JM_{2}$, and $(N_{2}:_{R_{2}}JM_{2})\cap
S=\phi$, then $(N_{2}:_{M_{2}}J)$ is an $S$-prime submodule of $M_{2}$.
\end{enumerate}

\begin{proof}
(1). We note that $(\overline{N_{2}}^{\varphi}:_{R_{1}\Join^{f}J}M_{1}%
\Join^{\varphi}JM_{2})\cap\overline{S}^{\varphi}=\phi$ if and only if
$(N_{2}:_{R_{2}}M_{2})\cap S=\phi$. Indeed if $(t,f(t)+j)=(t,s)\in\overline
{S}^{\varphi}$ such that $(t,s)(M_{1}\Join^{\varphi}JM_{2})\subseteq
\overline{N_{2}}^{\varphi}$, then for each $m_{2}=\varphi(m_{1})\in M_{2}$, we
have $(t,s)(m_{1},m_{2})\in\overline{N_{2}}^{\varphi}$. Therefore, $sm_{2}\in
N_{2}$ and $s\in(N_{2}:_{R_{2}}M_{2})$. The converse is similar.

Suppose $N_{2}$ is an $S$-prime submodule of $M_{2}$ associated to $s=f(t)\in
S$. Let $(r_{1},f(r_{1})+j)\in R_{1}\Join^{f}J$ and $(m_{1},\varphi
(m_{1})+m_{2})\in M_{1}\Join JM_{2}$ such that
\[
(r_{1},f(r_{1})+j)(m_{1},\varphi(m_{1})+m_{2})\in\overline{N_{2}}^{\varphi}%
\]

Then $(f(r_{1})+j)(\varphi(m_{1})+m_{2})\in N_{2}$ and so $s(f(r_{1}%
)+j)\in(N_{2}:_{R_{2}}M_{2})$ or $s(\varphi(m_{1})+m_{2})\in N_{2}$. If
$s(f(r_{1})+j)\in(N_{2}:_{R_{2}}M_{2})$, then for all $(m_{1},\varphi
(m_{1})+m_{2})\in M_{1}\Join^{\varphi}JM_{2}$, clearly $(t,s)(r_{1}%
,f(r_{1})+j)(m_{1},\varphi(m_{1})+m_{2})\in\overline{N_{2}}^{\varphi}$ and so
$(t,s)(r_{1},f(r_{1})+j)\in(\overline{N_{2}}^{\varphi}:_{R_{1}\Join^{f}J}%
M_{1}\Join^{\varphi}JM_{2})$. If $s(\varphi(m_{1})+m_{2})\in N_{2}$, then
$(t,s)(m_{1},\varphi(m_{1})+m_{2})\in\overline{N_{2}}^{\varphi}$ and the
result follows. Conversely, suppose $\overline{N_{2}}^{\varphi}$ is an
$\overline{S}^{\varphi}$-prime submodule of $M_{1}\Join^{\varphi}JM_{2}$
associated to $(t,f(t)+j)=(t,s)\in\overline{S}^{\varphi}$. Let $r_{2}%
=f(r_{1})\in R_{2}$ and $m_{2}=\varphi(m_{1})\in M_{2}$ such that $r_{2}%
m_{2}\in N_{2}$. Then $(r_{1},r_{2})\in R_{1}\Join^{f}J$ and $(m_{1},m_{2})\in
M_{1}\Join^{\varphi}JM_{2}$ with $(r_{1},r_{2})(m_{1},m_{2})\in\overline
{N_{2}}^{\varphi}$. Thus, $(t,s)(r_{1},r_{2})(M_{1}\Join^{\varphi}%
JM_{2})\subseteq\overline{N_{2}}^{\varphi}$ or $(t,s)(m_{1},m_{2})\in
\overline{N_{2}}^{\varphi}$. If $(t,s)(r_{1},r_{2})(M_{1}\Join^{\varphi}%
JM_{2})\subseteq\overline{N_{2}}^{\varphi}$, then for all $m=\varphi
(m^{\prime})\in M_{2}$, we have $(t,s)(r_{1},r_{2})(m^{\prime},m)\in
\overline{N_{2}}^{\varphi}$ and so $sr_{2}M_{2}\subseteq N_{2}$. If
$(t,s)(m_{1},m_{2})\in\overline{N_{2}}^{\varphi}$, then $sm_{2}\in N_{2}$ and
we are done.

(2). Suppose $\overline{N_{2}}^{\varphi}$ is an $\overline{S}^{\varphi}$-prime
submodule of $M_{1}\Join^{\varphi}JM_{2}$ associated to $(t,f(t)+j^{\prime
})=(t,s)\in$ $\overline{S}^{\varphi}$. Let $r_{2}\in R_{2}$, $m_{2}\in M_{2}$
such that $r_{2}m_{2}\in(N_{2}:_{M_{2}}J)$. Then $r_{2}Jm_{2}\subseteq N_{2}$
and so for all $j\in J$, we have $(r_{1},f(r_{1}))(0,jm_{2})\in\overline
{N_{2}}^{\varphi}$ where $f(r_{1})=r_{2}$. By assumption, $(t,s)(r_{1}%
,r_{2})\in(\overline{N_{2}}^{\varphi}:_{R_{1}\Join^{f}J}M_{1}\Join^{\varphi
}JM_{2})$ or $(t,s)(0,jm_{2})\in\overline{N_{2}}^{\varphi}$. If $(t,s)(r_{1}%
,r_{2})\in(\overline{N_{2}}^{\varphi}:_{R_{1}\Join^{f}J}M_{1}\Join^{\varphi
}JM_{2})$, then for all $m_{2}\in M_{2}$ and all $j\in J$, we have
$(t,s)(r_{1},r_{2})(0,jm_{2})\in\overline{N_{2}}^{\varphi}$ and so
$sr_{2}jm_{2}\in N_{2}$. Thus, $sr_{2}\in(N_{2}:_{R_{2}}JM_{2})=((N_{2}%
:_{M_{2}}J):_{R_{2}}M_{2})$. If $(t,s)(r_{1},r_{2})\notin(\overline{N_{2}%
}^{\varphi}:_{R_{1}\Join^{f}J}M_{1}\Join^{\varphi}JM_{2})$, then
$(t,s)(0,jm_{2})\in\overline{N_{2}}^{\varphi}$ for all $j\in J$ and so
$sm_{2}\in(N_{2}:_{M_{2}}J)$ as required.
\end{proof}

In particular, if we consider $S=\left\{  1_{R_{2}}\right\}  $ and take
$T=\left\{  (1_{R_{1}},1_{R_{2}})\right\}  $ instead of $\overline{S}%
^{\varphi}$ in Theorem \ref{Amalg2}, then we get the following corollary.

\begin{corollary}
Consider the $(R_{1}\Join^{f}J)$-module $M_{1}\Join^{\varphi}JM_{2}$ defined
as in Theorem \ref{Amalg} where $f$ and $\varphi$ are epimorphisms and let
$N_{2}$ be a submodule of $M_{2}$. Then
\end{corollary}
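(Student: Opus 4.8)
The plan is to deduce this corollary directly from Theorem \ref{Amalg2} by the specialization indicated in the remark just above it, namely taking $S=\{1_{R_{2}}\}$ as the multiplicatively closed subset of $R_{2}$ and using $T=\{(1_{R_{1}},1_{R_{2}})\}$ in place of $\overline{S}^{\varphi}$. The whole point is that both of these trivial multiplicatively closed sets turn the ``($S$-)prime'' conditions of Theorem \ref{Amalg2} into ordinary primeness, so that no new argument is required.

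First I would record the two reductions explicitly. For $S=\{1_{R_{2}}\}$ the disjointness condition $(N_{2}:_{R_{2}}M_{2})\cap S=\emptyset$ says exactly that $N_{2}$ is proper, and the defining implication of an $S$-prime submodule, with the forced choice $s=1_{R_{2}}$, reads: $r_{2}m_{2}\in N_{2}$ implies $r_{2}\in(N_{2}:_{R_{2}}M_{2})$ or $m_{2}\in N_{2}$; that is, $N_{2}$ is $S$-prime if and only if it is prime. The same observation applied to $T=\{(1_{R_{1}},1_{R_{2}})\}$, which is the identity of $R_{1}\Join^{f}J$ and indeed lies in $\overline{S}^{\varphi}$ since $f(1_{R_{1}})=1_{R_{2}}$, shows that a $T$-prime submodule of $M_{1}\Join^{\varphi}JM_{2}$ is precisely a prime submodule.

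With these identifications part (1) is immediate: Theorem \ref{Amalg2}(1) states that $N_{2}$ is $S$-prime in $M_{2}$ if and only if $\overline{N_{2}}^{\varphi}$ is $\overline{S}^{\varphi}$-prime in $M_{1}\Join^{\varphi}JM_{2}$, which now reads ``$N_{2}$ is prime if and only if $\overline{N_{2}}^{\varphi}$ is prime.'' For part (2) I would invoke Theorem \ref{Amalg2}(2); the auxiliary hypothesis $(N_{2}:_{R_{2}}JM_{2})\cap S=\emptyset$ specializes to $1_{R_{2}}\notin(N_{2}:_{R_{2}}JM_{2})$, i.e.\ $JM_{2}\nsubseteq N_{2}$, and the conclusion becomes that $(N_{2}:_{M_{2}}J)$ is prime in $M_{2}$.

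The only step genuinely requiring care --- and the one I would verify in detail --- is that replacing $\overline{S}^{\varphi}$ by the smaller set $T$ does not weaken the equivalence. This is legitimate by the same reasoning as in the remark following Theorem \ref{Amalg}: when $S=\{1_{R_{2}}\}$, any $\overline{S}^{\varphi}$-element occurring in the proof of Theorem \ref{Amalg2} may be taken to be $(1_{R_{1}},1_{R_{2}})$, so restricting attention to $T$ loses nothing. Once this substitution is justified, no further computation is needed and both parts of the corollary follow at once from Theorem \ref{Amalg2}.
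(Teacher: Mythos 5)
Your proposal is correct and takes essentially the same route as the paper: the paper obtains this corollary precisely by specializing Theorem \ref{Amalg2} to $S=\left\{ 1_{R_{2}}\right\}$ and using $T=\left\{ (1_{R_{1}},1_{R_{2}})\right\}$ in place of $\overline{S}^{\varphi}$, exactly as you do. Your explicit check that the witness elements appearing in the proof of Theorem \ref{Amalg2} can be chosen to be $(1_{R_{1}},1_{R_{2}})$, so that passing from $\overline{S}^{\varphi}$ to the smaller set $T$ is harmless, is just the detail the paper leaves implicit in its remark.
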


\begin{enumerate}
\item $N_{2}$ is a prime submodule of $M_{2}$ if and only if $\overline{N_{2}%
}^{\varphi}$ is a prime submodule of $M_{1}\Join^{\varphi}JM_{2}$.

\item If $\overline{N_{2}}^{\varphi}$ is a prime submodule of $M_{1}%
\Join^{\varphi}JM_{2}$ and $J\nsubseteq(N_{2}:_{R_{2}}M_{2})$, then
$(N_{2}:_{M_{2}}J)$ is a prime submodule of $M_{2}$.
\end{enumerate}

\begin{theorem}
\label{Amalg3}Consider the $(R_{1}\Join^{f}J)$-module $M_{1}\Join^{\varphi
}JM_{2}$ defined as in Theorem \ref{Amalg} where $f$ and $\varphi$ are
epimorphisms. Let $S$ be a multiplicatively closed subsets of $R_{2}$ and
$N_{2}$ be a submodule of $M_{2}$. Then
\end{theorem}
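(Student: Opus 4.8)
The statement I expect here is the weakly $\overline{S}^{\varphi}$-prime analogue of Theorem \ref{Amalg2}: that $\overline{N_{2}}^{\varphi}$ is a weakly $\overline{S}^{\varphi}$-prime submodule of $M_{1}\Join^{\varphi}JM_{2}$ precisely when $N_{2}$ is weakly $S$-prime in $M_{2}$ together with a zero-product clause controlling the first coordinate, exactly parallel to Theorem \ref{Amalg} (2). The single computation driving everything is that the scalar product collapses in the second coordinate, namely $(r_{1},f(r_{1})+j)(m_{1},\varphi(m_{1})+m_{2})=(r_{1}m_{1},(f(r_{1})+j)(\varphi(m_{1})+m_{2}))$, so that membership of a product in $\overline{N_{2}}^{\varphi}$ is governed entirely by whether $(f(r_{1})+j)(\varphi(m_{1})+m_{2})=r_{2}n_{2}$ lies in $N_{2}$, writing $r_{2}=f(r_{1})+j$ and $n_{2}=\varphi(m_{1})+m_{2}$. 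I would open by recording, just as in Theorem \ref{Amalg2}, that $(\overline{N_{2}}^{\varphi}:_{R_{1}\Join^{f}J}M_{1}\Join^{\varphi}JM_{2})\cap\overline{S}^{\varphi}=\emptyset$ if and only if $(N_{2}:_{R_{2}}M_{2})\cap S=\emptyset$, so the disjointness hypotheses line up.

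For the forward direction I would assume $\overline{N_{2}}^{\varphi}$ is weakly $\overline{S}^{\varphi}$-prime with weakly element $(t,s)$, $s=f(t)+j'$, and use surjectivity of $f$ and $\varphi$ throughout. To recover weak $S$-primeness of $N_{2}$, given $0\neq r_{2}n_{2}\in N_{2}$ I write $r_{2}=f(r_{1})$ and $n_{2}=\varphi(m_{1})$, so that $(r_{1},r_{2})(m_{1},n_{2})=(r_{1}m_{1},r_{2}n_{2})$ has nonzero second coordinate and is therefore a nonzero element of $\overline{N_{2}}^{\varphi}$; applying weak primeness and reading off the second coordinate yields $sr_{2}\in(N_{2}:_{R_{2}}M_{2})$ or $sn_{2}\in N_{2}$. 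For the zero-product clause I argue contrapositively: if there exist $r_{1},m_{1},j,m_{2}$ with $r_{2}n_{2}=0$ but $r_{1}m_{1}\neq0$, then $(r_{1},r_{2})(m_{1},n_{2})=(r_{1}m_{1},0)\neq(0,0)$ still lies in $\overline{N_{2}}^{\varphi}$ since $0\in N_{2}$, and weak primeness forces $sr_{2}\in(N_{2}:_{R_{2}}M_{2})$ or $sn_{2}\in N_{2}$, contradicting the failure of both for every $s$.

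For the converse I would fix a weakly $S$-element $s$ of $N_{2}$, assume the zero-product condition, and take a nonzero product in $\overline{N_{2}}^{\varphi}$, i.e. $r_{2}n_{2}\in N_{2}$ with $(r_{1}m_{1},r_{2}n_{2})\neq(0,0)$, splitting into two cases. If $r_{2}n_{2}\neq0$ then $0\neq r_{2}n_{2}\in N_{2}$ and weak primeness of $N_{2}$ gives the dichotomy directly, which lifts to $(t,s)(r_{1},r_{2})\in(\overline{N_{2}}^{\varphi}:_{R_{1}\Join^{f}J}M_{1}\Join^{\varphi}JM_{2})$ or $(t,s)(m_{1},n_{2})\in\overline{N_{2}}^{\varphi}$ exactly as in Theorem \ref{Amalg2}. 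If $r_{2}n_{2}=0$ then necessarily $r_{1}m_{1}\neq0$, and the zero-product condition in contrapositive form supplies the same dichotomy. A companion part, asserting that $(N_{2}:_{M_{2}}J)$ is weakly $S$-prime whenever $(N_{2}:_{R_{2}}JM_{2})\cap S=\emptyset$, would be proved along the lines of Theorem \ref{Amalg2} (2) using the test elements $(r_{1},r_{2})(0,jm_{2})=(0,r_{2}jm_{2})$, again with extra bookkeeping for the case $r_{2}jm_{2}=0$.

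The step I expect to be the main obstacle is producing a \emph{single fixed} weakly $\overline{S}^{\varphi}$-element that serves all cases at once. Weak primeness of $N_{2}$ furnishes one $s$ handling the case $r_{2}n_{2}\neq0$, whereas the contrapositive of the zero-product clause a priori only yields some $s'$ \emph{depending on the instance} $(r_{1},m_{1})$, so the definition's requirement of a fixed weakly element is not immediately met. The natural remedy is to pass to the product $(tt',f(tt'))$ and use that $(N_{2}:_{R_{2}}M_{2})$ is an ideal and $N_{2}$ a submodule, both stable under the $R_{2}$-action, so that multiplying an already-achieved conclusion by the extra factor preserves it; making this genuinely uniform is the delicate point, and it forces one to read the zero-product clause carefully relative to the chosen element of $N_{2}$ rather than merely as an unquantified existence statement. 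Keeping the surjectivity of $f$ and $\varphi$ active throughout, so that every $r_{2}\in R_{2}$ and $n_{2}\in M_{2}$ really arises as $f(r_{1})+j$ and $\varphi(m_{1})+m_{2}$, is the other place demanding care.
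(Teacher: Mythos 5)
Your proposal is correct and takes essentially the same route as the paper's proof of Theorem \ref{Amalg3}: the same reduction of membership in $\overline{N_{2}}^{\varphi}$ to the second coordinate, the same case split on whether $(f(r_{1})+j)(\varphi(m_{1})+m_{2})$ vanishes, the same use of surjectivity of $f$ and $\varphi$ in both directions, and the same treatment of the companion part via the test elements $(r_{1},r_{2})(0,jm_{2})$. The uniformity obstacle you single out is genuine, but the paper's own proof does not resolve it any better than you do: it takes the instance-dependent $s^{\prime}=f(t^{\prime})$ furnished by the contrapositive of the zero-product clause and simply declares $(tt^{\prime},ss^{\prime})$ to be a weakly $\overline{S}^{\varphi}$-element, which is exactly the product remedy you describe, with the dependence of $s^{\prime}$ on the particular instance left unaddressed.
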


\begin{enumerate}
\item $\overline{N_{2}}^{\varphi}$ is a weakly $\overline{S}^{\varphi}$-prime
submodule of $M_{1}\Join^{\varphi}JM_{2}$ if and only if $N_{2}$ is a weakly
$S$-prime submodule of $M_{2}$ and for $r_{1}\in R_{1}$, $m_{1}\in M_{1}$,
$m_{2}\in JM_{2}$, $j\in J$ with $(f(r_{1})+j)(\varphi(m_{1})+m_{2})=0$ but
$s(f(r_{1})+j)\notin(N_{2}:_{R_{2}}M_{2})$ and $s(\varphi(m_{1})+m_{2})\notin
N_{2}$ for all $s\in S$, then $r_{1}m_{1}=0$.

\item If $\overline{N_{2}}^{\varphi}$ is a weakly $\overline{S}^{\varphi}%
$-prime submodule of $M_{1}\Join^{\varphi}JM_{2}$, $(N_{2}:_{R_{2}}JM_{2})\cap
S=\phi$ and $Z_{R_{2}}(M_{2})\cap J=\left\{  0\right\}  $, then $(N_{2}%
:_{M_{2}}J)$ is a weakly $S$-prime submodule of $M_{2}$.
\end{enumerate}

\begin{proof}
(1). Suppose $s=f(t)\in S$ is a weakly $S$-element of $N_{2}$. Let
$(r_{1},f(r_{1})+j)\in R_{1}\Join^{f}J$ and $(m_{1},\varphi(m_{1})+m_{2})\in
M_{1}\Join^{\varphi}JM_{2}$ such that
\[
(0,0)\neq(r_{1},f(r_{1})+j)(m_{1},\varphi(m_{1})+m_{2})\in\overline{N_{2}%
}^{\varphi}%
\]

Then $(f(r_{1})+j)(\varphi(m_{1})+m_{2})\in N_{2}$. If $(f(r_{1}%
)+j)(\varphi(m_{1})+m_{2})\neq0$, then the result follows as in the proof of
(1) in Theorem \ref{Amalg2}. Suppose $(f(r_{1})+j)(\varphi(m_{1})+m_{2})=0$ so
that $r_{1}m_{1}\neq0$. Then by assumption, there exists $s^{\prime
}=f(t^{\prime})\in S$ such that $s^{\prime}(f(r_{1})+j)\in(N_{2}:_{R_{2}}%
M_{2})$ or $s^{\prime}(\varphi(m_{1})+m_{2})\in N_{2}$. It follows clearly
that $(t^{\prime},s^{\prime})(r_{1},f(r_{1})+j)\in(\overline{N_{2}}^{\varphi
}:_{R_{1}\Join^{f}J}M_{1}\Join^{\varphi}JM_{2})$ or $(t^{\prime},s^{\prime
})(m_{1},\varphi(m_{1})+m_{2})\in\overline{N_{2}}^{\varphi}$. Hence,
$(tt^{\prime},ss^{\prime})$ is a weakly $\overline{S}^{\varphi}$-element of
$\overline{N_{2}}^{\varphi}$. Conversely, let $(t,f(t)+j)=(t,s)$ be a weakly
$\overline{S}^{\varphi}$-element of $\overline{N_{2}}^{\varphi}$. Let
$r_{2}=f(r_{1})\in R_{2}$ and $m_{2}=f(m_{1})\in M_{2}$ such that $0\neq
r_{2}m_{2}\in N_{2}$. Then $(r_{1},r_{2})\in R_{1}\Join^{f}J$ and
$(m_{1},m_{2})\in M_{1}\Join^{\varphi}JM_{2}$ with $(0.0)\neq(r_{1}%
,r_{2})(m_{1},m_{2})\in\overline{N_{2}}^{\varphi}$. Hence, either
$(t,s)(r_{1},r_{2})\in(\overline{N_{2}}^{\varphi}:_{R_{1}\Join^{f}J}M_{1}%
\Join^{\varphi}JM_{2})$ or $(t,s)(m_{1},m_{2})\in\overline{N_{2}}^{\varphi}$.
In the first case, for all $m=\varphi(m^{\prime})\in M_{2}$, $(tr_{1}%
,sr_{2})(m^{\prime},m)\in\overline{N_{2}}^{\varphi}$. Hence, $sr_{2}m\in
N_{2}$ and then $sr_{2}\in(N_{2}:_{R_{2}}M_{2})$. In the second case, we have
$sm_{2}\in N_{2}$ and so $s$ is a weakly $S$-element of $N_{2}$. Now, let
$r_{1}\in R_{1}$, $m_{1}\in M_{1}$, $m_{2}\in JM_{2}$, $j\in J$ with
$(f(r_{1})+j)(\varphi(m_{1})+m_{2})=0$ and suppose $r_{1}m_{1}\neq0$. Then
$(0,0)\neq(r_{1},f(r_{1})+j)(m_{1},\varphi(m_{1})+m_{2})\in\overline{N_{2}%
}^{\varphi}$ and so $(t,s)(r_{1},f(r_{1})+j)\in(\overline{N_{2}}^{\varphi
}:_{R_{1}\Join^{f}J}M_{1}\Join JM_{2})$ or $(t,s)(m_{1},\varphi(m_{1}%
)+m_{2})\in\overline{N_{2}}^{\varphi}$. Hence, clearly, either $s(f(r_{1}%
)+j)\in(N_{2}:_{R_{2}}M_{2})$ or $s(\varphi(m_{1})+m_{2})\in N_{1}$ and the
result follows by contrapositive.

(2) Suppose $(t,f(t)+j)=(t,s)$ is a weakly $\overline{S}^{\varphi}$-element of
$\overline{N_{2}}^{\varphi}$. Let $r_{2}=f(r_{1})\in R_{2}$, $m_{2}\in M_{2}$
such that $0\neq r_{2}m_{2}\in(N_{2}:_{M_{2}}J)$. Then $r_{2}Jm_{2}\subseteq
N_{2}$ and so for all $j\in J$, we have $(r_{1},r_{2})(0,jm_{2})\in
\overline{N_{2}}^{\varphi}$. If $j\neq0$ and $(r_{1},r_{2})(0,jm_{2})=(0,0)$,
then $r_{2}jm_{2}=0$ and so $r_{2}m_{2}=0$ as $Z_{R_{2}}(N_{2})\cap J=\left\{
0\right\}  $, a contradiction. Thus, for all $j\neq0$, $(r_{1},r_{2}%
)(0,jm_{2})\neq(0,0)$. By assumption and similar to the proof of (2) of
Theorem \ref{Amalg2}, we have for all $j\neq0$, either $sr_{2}jm_{2}\in N_{2}$
or $(t,s)(0,jm_{2})\in\overline{N_{2}}^{\varphi}$ for all $m_{2}\in M_{2}$.
Thus, $sr_{2}\in(N_{2}:_{R_{2}}JM_{2})=((N_{2}:_{M_{2}}J):_{R_{2}}M_{2})$ or
$sm_{2}\in(N_{2}:_{M_{2}}J)$ and we are done.
\end{proof}

\begin{corollary}
\label{ca3}Consider the $(R_{1}\Join^{f}J)$-module $M_{1}\Join^{\varphi}%
JM_{2}$ defined as in Theorem \ref{Amalg} where $f$ and $\varphi$ are
epimorphisms. If $N_{2}$ is a submodule of $M_{2}$, then
\end{corollary}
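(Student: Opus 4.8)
The plan is to deduce this corollary directly from Theorem \ref{Amalg3} by specializing to the trivial multiplicatively closed set, exactly as Corollary \ref{ca1} was obtained from Theorem \ref{Amalg} and the corollary following Theorem \ref{Amalg2} was obtained from that theorem. Concretely, I would set $S=\{1_{R_{2}}\}$, which is trivially multiplicatively closed in $R_{2}$, and then run the argument of Theorem \ref{Amalg3} using $T=\{(1_{R_{1}},1_{R_{2}})\}$ in place of $\overline{S}^{\varphi}$; the remark after Theorem \ref{Amalg} already notes that the amalgamation results remain valid with such a product set substituted for the amalgamated multiplicatively closed set, and the same reasoning applies here.

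First I would observe that a weakly $\{1_{R_{2}}\}$-prime submodule of $M_{2}$ is nothing but a weakly prime submodule of $M_{2}$, since the fixed element $s=1_{R_{2}}$ acts as the identity, so $sa\in(N_{2}:_{R_{2}}M_{2})$ reduces to $a\in(N_{2}:_{R_{2}}M_{2})$ and $sm_{2}\in N_{2}$ reduces to $m_{2}\in N_{2}$. Likewise, a weakly $T$-prime submodule of $M_{1}\Join^{\varphi}JM_{2}$ with $T=\{(1_{R_{1}},1_{R_{2}})\}$ is precisely a weakly prime submodule. With these identifications the equivalence in part (1) of Theorem \ref{Amalg3} becomes the asserted equivalence between $\overline{N_{2}}^{\varphi}$ being weakly prime and $N_{2}$ being weakly prime together with the stated compatibility condition on pairs $(r_{1},m_{1})$ with $(f(r_{1})+j)(\varphi(m_{1})+m_{2})=0$, where now the quantifier ``for all $s\in S$'' collapses to the single choice $s=1_{R_{2}}$.

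For part (2) I would specialize the hypotheses of Theorem \ref{Amalg3}(2) in the same way: the condition $(N_{2}:_{R_{2}}JM_{2})\cap S=\phi$ with $S=\{1_{R_{2}}\}$ says exactly that $1_{R_{2}}\notin(N_{2}:_{R_{2}}JM_{2})$, i.e. $JM_{2}\nsubseteq N_{2}$, equivalently $J\nsubseteq(N_{2}:_{R_{2}}M_{2})$, while the zero-divisor hypothesis $Z_{R_{2}}(M_{2})\cap J=\{0\}$ is carried over unchanged. Theorem \ref{Amalg3}(2) then yields that $(N_{2}:_{M_{2}}J)$ is weakly $\{1_{R_{2}}\}$-prime, which by the first identification above is simply weakly prime in $M_{2}$.

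Since everything is a direct instance of Theorem \ref{Amalg3}, I do not anticipate any genuine obstacle; the only point requiring care is the bookkeeping needed to see that substituting $T=\{(1_{R_{1}},1_{R_{2}})\}$ for $\overline{S}^{\varphi}$ is legitimate, i.e. that the proof of Theorem \ref{Amalg3} never uses elements of $\overline{S}^{\varphi}$ beyond their role as weakly $\overline{S}^{\varphi}$-elements, so that the trivial product set suffices. This is exactly the mild adaptation flagged in the remark following Theorem \ref{Amalg}, and verifying it is routine.
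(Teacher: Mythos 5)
Your proposal is correct and is essentially the paper's own (implicit) proof: Corollary \ref{ca3} is stated without a written proof precisely because it is the specialization of Theorem \ref{Amalg3} to $S=\{1_{R_{2}}\}$ with $T=\{(1_{R_{1}},1_{R_{2}})\}$ substituted for $\overline{S}^{\varphi}$, exactly the pattern the paper spells out for Corollary \ref{ca1} and for the corollary following Theorem \ref{Amalg2}. The one wrinkle --- your specialization carries over the hypothesis $Z_{R_{2}}(M_{2})\cap J=\{0\}$ whereas the corollary as printed says $Z_{R_{2}}(N_{2})\cap J=\{0\}$ --- reflects an inconsistency already internal to the paper (the statement of Theorem \ref{Amalg3}(2) uses $Z_{R_{2}}(M_{2})$ while its proof invokes $Z_{R_{2}}(N_{2})$), not a defect of your argument.
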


\begin{enumerate}
\item $\overline{N_{2}}^{\varphi}$ is a weakly prime submodule of $M_{1}%
\Join^{\varphi}JM_{2}$ if and only if $N_{2}$ is a weakly prime submodule of
$M_{2}$ and for $r_{1}\in R_{1}$, $m_{1}\in M_{1}$, $m_{2}\in JM_{2}$, $j\in
J$ with $(f(r_{1})+j)(\varphi(m_{1})+m_{2})=0$ but $(f(r_{1})+j)\notin
(N_{2}:_{R_{2}}M_{2})$ and $(\varphi(m_{1})+m_{2})\notin N_{2}$, then
$r_{1}m_{1}=0$.

\item If $\overline{N_{2}}^{\varphi}$ is a weakly prime submodule of
$M_{1}\Join^{\varphi}JM_{2}$, $J\nsubseteq(N_{2}:_{R_{2}}M_{2})$ and
$Z_{R_{2}}(N_{2})\cap J=\left\{  0\right\}  $, then $(N_{2}:_{M_{2}}J)$ is a
weakly prime submodule of $M_{2}$.
\end{enumerate}

\begin{corollary}
\label{Dup}Let $N$ be a submodule of an $R$-module $M$, $J$ an ideal of $R$
and $S$ a multiplicatively closed subset of $R$.
\end{corollary}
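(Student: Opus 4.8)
The plan is to derive this corollary directly from Theorem \ref{Amalg} by passing to the duplication setting. As recorded in the paragraph following the definition of $M_{1}\Join^{\varphi}JM_{2}$, upon setting $R=R_{1}=R_{2}$, $M=M_{1}=M_{2}$, $f=Id_{R}$ and $\varphi=Id_{M}$, the amalgamation module $M_{1}\Join^{\varphi}JM_{2}$ becomes the duplication $M\Join J$, the submodule $N_{1}\Join^{\varphi}JM_{2}$ becomes $N\Join J$, and the multiplicatively closed subset $S\Join^{f}J$ becomes $S\Join J$. Thus every assertion of Theorem \ref{Amalg} transfers once these identifications are made.

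First I would treat the $S$-prime statement. Applying part (1) of Theorem \ref{Amalg} with the above choices, and using that $f$ and $\varphi$ act as the identity so that $f(r_{1})=r_{1}$ and $\varphi(m_{1})=m_{1}$, the equivalence collapses to the claim that $N\Join J$ is an $S\Join J$-prime submodule of $M\Join J$ if and only if $N$ is an $S$-prime submodule of $M$.

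Next I would treat the weakly $S$-prime statement by invoking part (2) of Theorem \ref{Amalg}. Here the auxiliary condition, which in full generality requires $f(r_{1})m_{2}+j\varphi(m_{1})+jm_{2}=0$ whenever $r_{1}m_{1}=0$ with $s_{1}r_{1}\notin(N_{1}:_{R_{1}}M_{1})$ and $s_{1}m_{1}\notin N_{1}$ for all $s_{1}\in S$, specializes under $f=Id_{R}$ and $\varphi=Id_{M}$ to the requirement that $rm_{2}+jm+jm_{2}=0$ for every $j\in J$ and every $m_{2}\in JM$ whenever $rm=0$ but $sr\notin(N:_{R}M)$ and $sm\notin N$ for all $s\in S$. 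This yields the desired equivalence in the duplication case.

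Since the entire argument amounts to substituting particular data into an already-established theorem, I do not expect any genuine difficulty. The only point deserving care is to confirm that the identifications of the modules, submodules, and multiplicatively closed subsets in the duplication case coincide exactly with those established after the definition of the amalgamation module, so that Theorem \ref{Amalg} applies without modification; this bookkeeping, rather than any substantive argument, is the true content of the proof.
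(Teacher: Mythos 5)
Your proposal is correct and is exactly the paper's approach: the paper gives no separate proof of Corollary \ref{Dup}, treating it as an immediate specialization of Theorem \ref{Amalg} via the identifications $R=R_{1}=R_{2}$, $M=M_{1}=M_{2}$, $f=Id_{R}$, $\varphi=Id_{M}$, under which $N_{1}\Join^{\varphi}JM_{2}=N\Join J$ and $S\Join^{f}J=S\Join J$, as recorded in the paragraph preceding that theorem. The only remaining bookkeeping --- rewriting your specialized condition $rm_{2}+jm+jm_{2}=0$ for $m_{2}\in JM$ as the corollary's condition $(r+j)m^{\prime}=0$ for $(m,m^{\prime})\in M\Join J$, which follows from $m^{\prime}=m+m_{2}$ and $rm=0$ --- is immediate.
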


\begin{enumerate}
\item $N\Join J$ is an $(S\Join J)$-prime submodule of $M\Join J$ if and only
if $N$ is an $S$-prime submodule of $M$.

\item $N\Join J$ is a weakly $(S\Join J)$-prime submodule of $M\Join J$ if and
only if $N$ is a weakly $S$-prime submodule of $M$ and for $r\in R$, $m\in M$
with $rm=0$ but $sr\notin(N:_{R_{1}}M)$ and $sm\notin N$ for all $s\in S$,
then $(r+j)m^{\prime}=0$ for every $j\in J$ and $m^{\prime}\in M$ where
$(m,m^{\prime})\in M\Join J.$
\end{enumerate}

\begin{corollary}
Let $N$ be a submodule of an $R$-module $M$, $J$ an ideal of $R$ and $S$ a
multiplicatively closed subset of $R$.
\end{corollary}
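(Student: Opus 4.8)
The plan is to obtain this corollary as the duplication specialization of Theorems \ref{Amalg2} and \ref{Amalg3}, exactly as Corollary \ref{Dup} was obtained from Theorem \ref{Amalg}. First I would set $R_{1}=R_{2}=R$, $M_{1}=M_{2}=M$, $f=Id_{R}$ and $\varphi=Id_{M}$. As recorded in the paragraph preceding Theorem \ref{Amalg}, under these choices the ambient module $M_{1}\Join^{\varphi}JM_{2}$ collapses to the duplication $M\Join J$, the submodule $\overline{N_{2}}^{\varphi}$ becomes $\bar{N}$, and the multiplicatively closed set $\overline{S_{2}}^{\varphi}$ becomes $\bar{S}$. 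Since $Id_{R}$ and $Id_{M}$ are epimorphisms, the standing hypothesis of Theorems \ref{Amalg2} and \ref{Amalg3} (that $f$ and $\varphi$ be epimorphisms) is automatically satisfied, so both theorems apply without modification.

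With this dictionary fixed, the $\bar{S}$-prime characterization of $\bar{N}$ is read off directly from Theorem \ref{Amalg2}(1), the weakly $\bar{S}$-prime characterization from Theorem \ref{Amalg3}(1), and any residual statement about $(N:_{M}J)$ from Theorem \ref{Amalg2}(2) and Theorem \ref{Amalg3}(2); there is nothing to prove beyond performing the substitution. The only real work is to rewrite the technical side conditions in the duplication notation. For instance, the defect condition in Theorem \ref{Amalg3}(1) involves $r_{1}\in R_{1}$, $m_{1}\in M_{1}$, $m_{2}\in JM_{2}$ and $j\in J$ with $(f(r_{1})+j)(\varphi(m_{1})+m_{2})=0$; putting $f=Id$ and $\varphi=Id$ turns $f(r_{1})+j$ into $r+j$ (with $r\in R$, $j\in J$) and $\varphi(m_{1})+m_{2}$ into the second coordinate $m^{\prime}=m+m_{2}$ of a pair $(m,m^{\prime})\in M\Join J$, while the conclusion $r_{1}m_{1}=0$ becomes $rm=0$. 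The same substitution converts the hypotheses $(N_{2}:_{R_{2}}JM_{2})\cap S=\phi$ and $Z_{R_{2}}(M_{2})\cap J=\{0\}$ into $(N:_{R}JM)\cap S=\phi$ and $Z_{R}(M)\cap J=\{0\}$, yielding the residual statement for $(N:_{M}J)$.

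I do not expect any genuine obstacle here: the corollary is a pure specialization and all of its content already resides in the two proved theorems. The one point to watch is the faithful bookkeeping of these side conditions, so that the duplication statement asserts precisely what the general amalgamation statements say once the ring and module homomorphisms are taken to be identities.
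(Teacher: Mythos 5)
Your proposal is correct and is precisely the paper's route: this corollary is stated without proof as the duplication specialization $R_{1}=R_{2}=R$, $M_{1}=M_{2}=M$, $f=Id_{R}$, $\varphi=Id_{M}$ of Theorem \ref{Amalg2} (with Theorem \ref{Amalg3} giving the weakly version in Corollary \ref{Dup2}), using the identifications $\overline{N_{2}}^{\varphi}=\bar{N}$ and $\overline{S_{2}}^{\varphi}=\bar{S}$ recorded in the paragraph before Theorem \ref{Amalg}. Your observation that the identity maps are epimorphisms, so the standing hypothesis of those theorems holds automatically, together with the faithful transcription of the side conditions, is all that is needed.
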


\begin{enumerate}
\item $N$ is an $S$-prime submodule of $M$ if and only if $\overline{N}$ is an
$\overline{S}$-prime submodule of $M\Join J$.

\item If $\overline{N}$ is an $\overline{S}$-prime submodule of $M\Join J$ and
$(N:_{R}JM)\cap S=\phi$, then $(N:_{M}J)$ is an $S$-prime submodule of $M$.
\end{enumerate}

\begin{corollary}
\label{Dup2}Let $N$ be a submodule of an $R$-module $M$, $J$ an ideal of $R$
and $S$ a multiplicatively closed subset of $R$.
\end{corollary}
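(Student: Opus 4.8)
The plan is to obtain Corollary \ref{Dup2} as the duplication specialization of Theorem \ref{Amalg3}, exactly as Corollary \ref{Dup} was obtained from Theorem \ref{Amalg}. Accordingly, the first step is to take $R=R_{1}=R_{2}$, $M=M_{1}=M_{2}$, $f=Id_{R}$ and $\varphi=Id_{M}$. Since the identity maps $Id_{R}$ and $Id_{M}$ are epimorphisms, the standing hypotheses of Theorem \ref{Amalg3} (that $f$ and $\varphi$ be epimorphisms) are automatically satisfied, so that theorem is available in the present setting.

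Next I would invoke the identifications recorded just before Theorem \ref{Amalg}: under these choices one has $M_{1}\Join^{\varphi}JM_{2}=M\Join J$, $\overline{N_{2}}^{\varphi}=\bar{N}$ and $\overline{S_{2}}^{\varphi}=\bar{S}$. Moreover $f(r)=r$ and $\varphi(m)=m$ for all $r\in R$ and $m\in M$, so every expression $f(r_{1})+j$ collapses to $r+j$ and every expression $\varphi(m_{1})+m_{2}$ collapses to $m+m_{2}$, which is precisely the second coordinate of a typical element $(m,m+m_{2})\in M\Join J$. In particular $\overline{S}^{\varphi}$ becomes $\bar{S}=\{(r,r+j):r+j\in S\}$, and the zero-divisor condition $Z_{R_{2}}(M_{2})\cap J=\{0\}$ transfers verbatim to $Z_{R}(M)\cap J=\{0\}$.

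With these substitutions in place, part (1) of Theorem \ref{Amalg3} reads word for word as the corresponding assertion of the corollary, namely that $\bar{N}$ is a weakly $\bar{S}$-prime submodule of $M\Join J$ if and only if $N$ is a weakly $S$-prime submodule of $M$ together with the stated auxiliary condition on pairs $(r,m)$ with $rm=0$; and part (2) of Theorem \ref{Amalg3}, under the hypotheses $(N:_{R}JM)\cap S=\emptyset$ and $Z_{R}(M)\cap J=\{0\}$, gives that $(N:_{M}J)$ is a weakly $S$-prime submodule of $M$. Hence both assertions follow at once. The argument is pure specialization, so there is no genuine obstacle; the only point deserving attention is the careful bookkeeping of the identifications above, and once it is confirmed that each amalgamation object degenerates to its duplication counterpart, no further computation is required.
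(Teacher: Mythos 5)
Your proposal is correct and is exactly the paper's (implicit) argument: Corollary \ref{Dup2} is stated without a separate proof precisely because it is the specialization $R_{1}=R_{2}=R$, $M_{1}=M_{2}=M$, $f=Id_{R}$, $\varphi=Id_{M}$ of Theorem \ref{Amalg3}, using the identifications $\overline{N_{2}}^{\varphi}=\bar{N}$ and $\overline{S_{2}}^{\varphi}=\bar{S}$ recorded before Theorem \ref{Amalg}, and the identity maps are trivially epimorphisms. One bookkeeping remark: your hypotheses in part (2), namely $(N:_{R}JM)\cap S=\emptyset$ and $Z_{R}(M)\cap J=\{0\}$, are the faithful specialization of the theorem as stated, whereas the corollary as printed writes $(N:_{M}J)\cap S=\emptyset$ and $Z_{R}(N)\cap J=\{0\}$; these appear to be typographical slips in the paper (the paper itself wavers between $Z_{R_{2}}(M_{2})$ in the statement of Theorem \ref{Amalg3} and $Z_{R_{2}}(N_{2})$ in its proof), so your version is the defensible one.
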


\begin{enumerate}
\item $\overline{N}$ is a weakly $\overline{S}$-prime submodule of $M\Join J$
if and only if $N$ is a weakly $S$-prime submodule of $M$ and for $r\in R$,
$m\in M$, $m^{\prime}\in JM$, $j\in J$ with $(r+j)(m+m^{\prime})=0$ but
$s(r+j)\notin(N:_{R}M)$ and $s(m+m^{\prime})\notin N$ for all $s\in S$, then
$rm=0$.

\item If $\overline{N}$ is a weakly $\overline{S}$-prime submodule of $M\Join
J$, $(N:_{M}J)\cap S=\phi$ and $Z_{R}(N)\cap J=\left\{  0\right\}  $, then
$(N:_{M}J)$ is a weakly $S$-prime submodule of $M$.
\end{enumerate}

In the following example, we show that in general $N$ is a weakly $S$-prime
submodule of $M$ does not imply $N\Join J$ is a weakly $(S\Join J)$-prime
submodule of $M\Join J$.

\begin{example}
\label{ex2}Consider the $%
%TCIMACRO{\U{2124} }%
%BeginExpansion
\mathbb{Z}
%EndExpansion
$-submodule $N=0\times\left\langle \bar{0}\right\rangle $ of $M=%
%TCIMACRO{\U{2124} }%
%BeginExpansion
\mathbb{Z}
%EndExpansion
\times%
%TCIMACRO{\U{2124} }%
%BeginExpansion
\mathbb{Z}
%EndExpansion
_{6}$ and let $J=2%
%TCIMACRO{\U{2124} }%
%BeginExpansion
\mathbb{Z}
%EndExpansion
$. Then $N$ is a weakly prime submodule of $M$. Now
\[
M\Join J=\left\{  (m,m^{\prime})\in M\times M:m-m^{\prime}\in JM=2%
%TCIMACRO{\U{2124} }%
%BeginExpansion
\mathbb{Z}
%EndExpansion
\times\left\langle \bar{2}\right\rangle \right\}
\]
and
\[
N\Join J=\left\{  (n,m)\in N\times M:n-m\in2%
%TCIMACRO{\U{2124} }%
%BeginExpansion
\mathbb{Z}
%EndExpansion
\times\left\langle \bar{2}\right\rangle \right\}
\]
If we consider $(2,4)\in%
%TCIMACRO{\U{2124} }%
%BeginExpansion
\mathbb{Z}
%EndExpansion
\Join J$ and $((0,\bar{3}),(0,\bar{1}))\in M\Join J$, then we have
$(2,4).((0,\bar{3}),(0,\bar{1}))=((0,\bar{0}),(0,\bar{4}))\in N\Join J$. But
we have $(2,4)\notin((N\Join J):_{%
%TCIMACRO{\U{2124} }%
%BeginExpansion
\mathbb{Z}
%EndExpansion
\Join I}(M\Join J))$ as for example $(2,4)((2,\bar{2}),(0,\bar{0}))\notin
N\Join J$ and $((0,\bar{3}),(0,\bar{1}))\notin N\Join J$. Thus, $N\Join J$ is
not a weakly prime submodule of $M\Join J$.
\end{example}

We note that the condition in the reverse implication of Corollary \ref{Dup}
(2) does not hold in example \ref{ex2}. For example, if we take $r=2$ and
$m=(0,\bar{3})\in M$, then clearly, $rm=0$, $r\notin(N:_{R}M)=0$ and $m\notin
N$ but for $m^{\prime}=(0,\bar{2})\in JM=2%
%TCIMACRO{\U{2124} }%
%BeginExpansion
\mathbb{Z}
%EndExpansion
\times\left\langle \bar{2}\right\rangle $, we have $(r+0)m^{\prime}\neq0$.

Also, if the condition in the reverse implication of Corollary \ref{Dup2} (1)
does not hold, then we may find a weakly $S$-prime submodule $N$ of $M$ such
that $\overline{N}$ is not a weakly $\overline{S}$-prime submodule of $M\Join
J$.

\begin{example}
Consider $N$, $M$ and $J$ as in Example \ref{ex2}. If we consider $(2,4)\in%
%TCIMACRO{\U{2124} }%
%BeginExpansion
\mathbb{Z}
%EndExpansion
\Join J$ and $((0,\bar{1}),(0,\bar{3}))\in M\Join J$, then we have
$(2,4).((0,\bar{1}),(0,\bar{3}))=\overline{N}$. But $(2,4)\notin(\bar{N}:_{%
%TCIMACRO{\U{2124} }%
%BeginExpansion
\mathbb{Z}
%EndExpansion
\Join I}(M\Join J))$ and $((0,\bar{1}),(0,\bar{3}))\notin\overline{N}$. Thus,
$\bar{N}$ is not a weakly prime submodule of $M\Join J$.
\end{example}

\end{document}